\tikzstyle{dot}=[fill=black, draw=black, shape=circle]
\tikzstyle{arrow}=[->]
\tikzstyle{axis}=[<->, draw=black]
\tikzstyle{axis red}=[draw=red, <->]
\tikzstyle{new edge style 0}=[-, draw={rgb,255: red,191; green,191; blue,191}]
\tikzstyle{arrow red}=[draw=red, ->]
\tikzstyle{line red}=[-, draw=red]
\newcommand{\ww}[1]{{\color{black} #1}}
\newcommand{\www}[1]{{\color{blue} #1}}
\newcommand{\sgn}{\mathrm{sgn}}
\newcommand{\Z}{\mathbb Z}
\newcommand{\Ha}{\mathbb H}
\newcommand{\Ca}{\mathbb C}
\newcommand{\C}{\mathbb C}
\newcommand{\R}{\mathbb R}
\newcommand{\inte}{\mathrm{int}}
\newcommand{\SL}{\operatorname{SL}}
\newtheorem{theorem}{Theorem}[section]
\newtheorem*{theorem*}{Theorem}
\newtheorem{lemma}[theorem]{Lemma}
\newtheorem{proposition}[theorem]{Proposition}
\theoremstyle{remark}
\newtheorem{remark}[theorem]{Remark}
\title{On the real zeros of depth 1 quasimodular forms}
\author{Bo-Hae Im}
\address{Department of Mathematical sciences,  KAIST, 291 Daehak-ro, Yuseong-gu, Daejeon 34141, South Korea}
\email{bhim@kaist.ac.kr}
\author{Wonwoong Lee}
\address{Department of Mathematics, The University of Hong Kong, Pokfulam, Hong Kong}
\email{leeww041@hku.hk}
\subjclass[2020]{11F11, 11F99}
\keywords{Modular forms, Quasimodular forms}
\thanks{Bo-Hae Im was supported by Basic Science Research Program through the National Research Foundation of Korea(NRF) funded by the Korea government(MSIT)(NRF-2023R1A2C1002385).}
\date{\today}
\begin{document}

\maketitle

\begin{abstract}
    We discuss the critical points of modular forms, or more generally the zeros of quasimodular forms of depth $1$ for $\mathrm{PSL}_2(\mathbb Z)$. In particular, we consider the derivatives of the unique weight $k$ modular forms $f_k$ with the maximal number of consecutive zero Fourier coefficients following the constant $1$. Our main results state that (1) every zero of a depth $1$ quasimodular form near the derivative of the Eisenstein series in the standard fundamental domain lies on the geodesic segment $\{z \in \mathbb H: \Re(z)=1/2\}$, and (2)  more than quarter of zeros of $f_k$ in the standard fundamental domain lie on the geodesic segment $\{z \in \mathbb H: \Re(z)=1/2\}$ for large enough $k$ with $k\equiv 0 \pmod{12}$.
\end{abstract}

\section{Introduction}

Holomorphic modular forms are complex-analytic functions on the upper half-plane that satisfy specific transformation properties under the action of a congruence subgroup of $\mathrm{SL}_2(\Z)$
 and exhibit suitable behavior at the cusps. They form a central object of study in number theory. For background on the classical theory, we refer the reader to \cite{DS05} and \cite{Ser73}.

In contrast to modular forms, which satisfy strict invariance under the action of a congruence subgroup of $\mathrm{SL}_2(\Z)$, quasimodular forms allow a controlled failure of modularity, captured by a finite expansion in terms of a non-modular correction term. More precisely,
let $\Ha$ be the upper half-plane of complex numbers, $k$ be a non-negative even integer, and $p$ be a non-negative integer. A quasimodular form of weight $k$ and depth $p$ for the full modular group $\Gamma:=\mathrm{PSL}_2(\Z)$ is a holomorphic function $f:\Ha \to \Ca$ satisfying the following conditions:
\begin{enumerate}[\normalfont(i)]
    \item There exist holomorphic functions \(Q_i(f)\) on $\Ha$ for \(i=0,1,\ldots,p\) that satisfy
\begin{equation*}
    f| _k \gamma=\sum_{i=0}^{p} Q_i(f)X(\gamma)^i, \quad \text{ with }  Q_{p}(f) \not\equiv 0, \text{ for all } \gamma=\begin{pmatrix}
a & b \\
c & d
\end{pmatrix} \in \Gamma,
\end{equation*}
 where the operator $| _k\gamma$ is defined by
\begin{equation*}
    f|_k\gamma(z)=(cz+d)^{-k}f(\gamma z) \quad \text{for } z \in \Ha,
\end{equation*}
and the function \(X(\gamma)\) is defined by
\begin{equation*}
    X(\gamma)(z)=\frac{c}{cz+d} \quad \text{for } z\in \Ha.
\end{equation*}
    \item \(f\) is polynomially bounded, i.e., there exists a constant \(\alpha>0\) such that $$f(z)=O((1+|  z|  ^2)/y)^{\alpha},$$ as \(y \to \infty\) and \(y \to 0\), where $z=x+iy$ with $x,y \in \R$.
\end{enumerate}
Instead of the second condition, one may replace it with {\it holomorphic at cusps} for the functions $Q_i(f)$, which is further discussed in \cite{Roy12}. By convention, we consider the $0$ function as a quasimodular form. Throughout this paper,  for the sake of brevity we have omitted explicit mention,  but it should be noted that all modular and quasimodular forms discussed herein are defined for $\Gamma$. The space of quasimodular forms of weight $k$ and depth $\leq p$ is denoted by $\widetilde{M_k}^{(\leq p)}$, and $M_k$ simply represents $\widetilde{M_k}^{(\leq 0)}$, the space of modular forms of weight $k$.

The Eisenstein series $E_2$ of weight $2$ and the derivatives of modular forms are standard examples of quasimodular forms. More precisely, if $f$ is a quasimodular form of weight $k$ and depth $p$, then $f'$ is either the zero or a quasimodular form of weight $k+2$ and depth $p+1$. In particular, the derivative of a non-constant holomorphic modular form is a depth $1$ quasimodular form.

There have been various and extensive studies on the zeros of modular and quasimodular forms in the standard fundamental domain $F$ of $\Gamma$, where
\begin{align*}
    F=\{z \in \Ha: -1/2<\Re(z)\leq 1/2, \quad |  z|  >1 \text{ if } \Re(z)<0, \quad |  z|  \geq 1 \text{ if } \Re(z)\geq0\} \cup \{\infty\}.  
\end{align*}
Rankin and Swinnerton-Dyer's celebrated result \cite{RS70} states that if $k\geq 4$ is an even integer, every zero of the Eisenstein series $E_k$ for $\Gamma$ lies on the unit circle. Getz \cite{Get04} proved that the same property holds for certain class of functions; modular forms $f$ `near' the Eisenstein series, those are, $f=E_k+\sum a_i E_{k-12i}\Delta^i$ for $a_i \in \R$ with small enough $|  a_i|  $. 

Let us introduce the so-called {\it Gap function} which can be viewed as a standard basis element of the space of weakly holomorphic modular forms. These are of the form
\begin{align*}
    f_{k,m}(z)=q^{-m}+O(q^{\ell+1})
\end{align*}
for $m \geq -\ell$, where $q:=e^{2\pi i z}$ and $\ell$ is the dimension of the space of weight $k$ holomorphic cusp forms, explicitly given by
\begin{align*}
    k=12\ell+k', \quad \text{for }k' \in \{0,4,6,8,10,14\}.
\end{align*}
We simply write $f_k:=f_{k,0}$. Duke and Jenkins \cite{DJ08} proved that any zero of $f_{k,m}$ in $F$ lies on the unit circle when $m \geq 0$. This result expands the understanding of the distribution of zeros of modular forms in the vicinity of the unit circle.

On the other hand, there are some results on the zeros lying on another geodesic segment in~$F$. Ghosh and Sarnak \cite{GS12} established estimates for the number of zeros of cuspidal Hecke eigenforms on the union of segments $\delta_1\cup\delta_2\cup\delta_3$, where
\begin{align*}
    \delta_1:=\{z \in F: \Re(z)=0\}, \quad \delta_2:=\{z \in F: \Re(z)=1/2\}, \quad \delta_3:= \{z \in F: |  z|  =1\}.
\end{align*}
They proved that the number of zeros of weight $k$ cuspidal Hecke eigenform on the segment $\delta_2$ is~$\gg \log k$, and on the union of the segments $\delta_1\cup\delta_2$, it is $\gg_{\epsilon} k^{1/4-1/80-\epsilon}$ as $k$ goes to infinity. Matom\"{a}ki \cite{Mat16} later improved by showing that each number is $\gg_{\epsilon} k^{1/8-\epsilon}$, and $\gg_{\epsilon} k^{1/4-\epsilon}$, respectively.

In accordance with Ghosh and Sarnak's terminology, we use the term {\it real zeros} to describe the zeros lying on the aforementioned geodesic segments.

In a recent paper \cite{GO22}, Gun and Oesterl\'{e} proved that the behavior of the derivative of the Eisenstein series $E_k$ for each even integer $k\geq 4$  with respect to the real zeros is interesting. They showed that all the zeros of $E_k'$ in $F$ lie on the line segment $\delta_2$. This finding prompts two natural questions, given the fact that Getz's result in \cite{Get04} and Duke and Jenkins' result in \cite{DJ08} focus on the properties of the zeros of the analogue of $E_k$:

\begin{itemize}
    \item Q1. Is every zero of depth $1$ quasimodular form in $F$ `near' $E_k'$ lying on $\delta_2$?
    \item Q2. Is every zero of $f_k'$  in $F$ lying on $\delta_2$?
\end{itemize}

In this paper, we investigate the properties of the real zeros of depth $1$ quasimodular forms, and address two natural questions related to the behavior of derivatives of modular forms. Our first main result provides an affirmative answer to Q1 when the field of coefficients of forms is restricted to real numbers.

\begin{theorem}\label{thm_near Eisen'} For a positive even integer $k$, let $E_k',g_1,g_2,\ldots,g_n$ be a basis of the space of weight $k+2$ depth $\leq 1$ quasimodular forms with real Fourier coefficients, and let $f=E_k'+\sum_{j=1}^n a_j g_j$ be a depth $\leq 1$ quasimodular form with $a_j \in \R$. If $|a_j|$'s are small enough depending on basis elements $g_1,\ldots,g_n$, then every zero of~$f$ in $F$ lies on $\delta_2$.
\end{theorem}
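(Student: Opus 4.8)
The key idea is to mimic Gun–Oesterlé's analysis of $E_k'$ on the segment $\delta_2$ and show it is stable under small real perturbations. The plan is to work along the vertical geodesic $\delta_2 = \{z = 1/2 + iy : y \ge \sqrt{3}/2\}$ and exploit a symmetry of $\delta_2$ under $\Gamma$. Concretely, the map $z \mapsto -\bar z$ sends $\delta_2$ to the segment $\{-1/2 + iy\}$, which is $\Gamma$-equivalent to $\delta_2$ via $T = \left(\begin{smallmatrix}1&1\\0&1\end{smallmatrix}\right)$; combining this with complex conjugation (which preserves real-Fourier-coefficient forms), one obtains a functional equation forcing a suitably normalized version of $f$ to be real-valued on $\delta_2$. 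The first step is therefore to make this precise: define $\phi_f(y)$ to be $f(1/2 + iy)$ multiplied by an appropriate power of $y$ and a unimodular factor (the analogue of the ``completed'' form used by Gun–Oesterlé) so that $\phi_f$ is a real analytic function of $y \in [\sqrt3/2, \infty)$, and show that real zeros of $f$ on $\delta_2$ correspond exactly to sign changes (or higher-order contact) of $\phi_f$.

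The second step is to establish the count for the unperturbed form: one knows from \cite{GO22} that all zeros of $E_k'$ in $F$ lie on $\delta_2$, and one can read off from their argument the \emph{number} $N_k$ of such zeros (roughly $k/12$, matching the valence formula, since $E_k'$ has weight $k+2$ and depth $1$ and no zero on $|z|=1$ except possibly at the corner, no zero at $i$, and controlled behavior at $\infty$). The point is that on the compact part of $\delta_2$ the function $\phi_{E_k'}$ has exactly $N_k$ simple sign changes, separated by definite gaps, while near the cusp $\phi_{E_k'}(y) \to$ a nonzero limit (coming from the $q$-expansion $E_k' = -\frac{k}{12}q + \cdots$ up to normalization, which dominates). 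Thus $\phi_{E_k'}$ is bounded away from zero outside small neighborhoods of its $N_k$ simple zeros, with a quantitative lower bound depending only on $k$.

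The third step is the perturbation argument. Writing $\phi_f = \phi_{E_k'} + \sum_j a_j \phi_{g_j}$, each $\phi_{g_j}$ is a fixed continuous (indeed real-analytic) function on $[\sqrt3/2,\infty)$ with controlled growth at the cusp, so for $|a_j|$ small the perturbation is uniformly small relative to the lower bound on $\phi_{E_k'}$ away from its zeros; hence $f$ has no zeros on $\delta_2$ outside the $N_k$ small neighborhoods, and in each such neighborhood $\phi_f$ still has a sign change, producing at least $N_k$ real zeros of $f$ on $\delta_2$ (here one also checks the cusp: the $q$-expansion of $f$ still starts with a nonzero multiple of $q$ for small $a_j$, so there is no zero escaping to $\infty$, and no zero appears on $|z|=1$ near the corner $\zeta_6$ by a similar local argument). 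Finally, the fourth step is a valence-type upper bound: the total number of zeros of $f$ in $F$, counted with multiplicity, is at most $N_k$ (the same dimension/valence count as for $E_k'$, since weight and depth are unchanged and the behavior at the cusp and at the elliptic points is stable under small perturbation). Combining ``$\ge N_k$ zeros on $\delta_2$'' with ``$\le N_k$ zeros in $F$'' forces every zero of $f$ in $F$ to lie on $\delta_2$, as claimed.

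The main obstacle I anticipate is the last step — pinning down the valence count and, relatedly, ensuring \emph{no} zeros hide on $\delta_3$ or $\delta_1$ or at the elliptic fixed points for the perturbed form. For $E_k'$ this is exactly Gun–Oesterlé's theorem, but transferring it to $f$ requires care because a small perturbation could in principle create a zero on the unit circle; one must argue locally (e.g.\ via Rouché on a neighborhood of the arc $\delta_3$, using that $E_k'$ is nonvanishing there except at the corner, and a careful local model at the corner $\zeta_6$ where $E_k'$ may vanish to some order) that all zeros genuinely migrate onto $\delta_2$. Handling the corner $\zeta_6 = e^{i\pi/3}$ — which lies simultaneously on $\delta_2$ and $\delta_3$ and is an elliptic point — is the delicate local computation, since the order of vanishing of $E_k'$ there depends on $k \bmod$ something and governs exactly how many of the $N_k$ zeros are ``at the corner'' versus strictly inside $\delta_2$.
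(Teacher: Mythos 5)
Your overall architecture (a lower bound on the number of zeros on $\delta_2$ that survives perturbation, matched against an upper bound on the total number of zeros in $F$) is reasonable, and the paper's proof has the same skeleton, though it obtains the lower bound differently: instead of counting sign changes of a real-valued function on $\delta_2$, it tracks each simple zero of $DE_k$ as an analytic locus $\xi_j(c_1,\ldots,c_d)$ via the implicit function theorem, and then rules out a zero leaving $\delta_2$ by a connectedness argument using the reflection $z\mapsto 1-\bar z$ (a zero off $\delta_2$ would force two distinct loci $\xi_a\neq\xi_b$ to coincide at the unperturbed form). That route avoids any quantitative separation of the zeros of $E_k'$ on $\delta_2$, which your Step 2 would need to make uniform.

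The genuine gap is your Step 4, which you yourself flag as the main obstacle but then dismiss as ``the same dimension/valence count as for $E_k'$, since weight and depth are unchanged.'' That is false as stated: a depth $1$ quasimodular form is not modular, the contour-integral proof of the classical valence formula does not close up, and the number of its zeros in $F$ is \emph{not} determined by the weight. Indeed the correct count for $E_k'$ is $\left[\frac{k-4}{6}\right]+\delta_{k\equiv 2\pmod 6}$ (roughly $k/6$, not the ``roughly $k/12$'' you assert), coming from the valence formula of Ittersum--Ringeling for depth $1$ forms: writing $f=f_0+f_1E_2$, the count $N_\infty(f)$ involves the zeros of $f_1$ on the unit circle and the signs $\sgn\bigl(e^{\frac{1}{2}ik\varphi_j}f(e^{i\varphi_j})\bigr)$ at those points. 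So establishing that the total zero count is locally constant under perturbation is exactly where the real work lies: one must verify that every ingredient of $N_\infty(f)$ --- the number $n$ of zeros of $f_1$ on the arc (via Getz's stability theorem applied to $f_1$ near $\frac{k}{12}E_k$), the quantities $v_\rho(f_1)$, $r(f_1)$, $w(e^{i\varphi_j})$, and the signs above --- is unchanged for small real perturbations. Without this input (or an equally substantive substitute) your upper bound is unsupported, and a Rouch\'e argument on $\partial F$ does not rescue it because $f$ has no modular transformation law to identify the boundary contributions. Your concern about the corner $\rho$ is legitimate but secondary; the paper handles it by noting that when $k\equiv 2\pmod 6$ every weight $k+2$ depth $\le 1$ form vanishes at $\rho$, so that zero locus is constant.
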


On the other hand, the answer to Q2 is negative. If we consider a weight $98$ depth $1$ quasimodular form $f_{96}'$, then numerical computations found that it has $4$ real zeros on $\delta_2$ and $2$ non-real zeros in $F$, whose real parts are approximately $0.44$ and $-0.44$, respectively. Nonetheless, we provide partial results on the proportion of real zeros for $f_k'$ under certain conditions regarding the residue of weight.

\begin{theorem}\label{thm_gap ftn}
    Let $k \equiv 0 \pmod{12}$ and let $\theta_j:=\frac{j\pi}{k+1}$,  $t_j:=\frac{1}{2}\cot \theta_j$.
    \begin{enumerate}[\normalfont(a)]
        \item For sufficiently large $k$, the function $f_k'\left(\frac{1}{2}+it\right)$ in variable $t$ has $\gg k$ sign changes along the interval $(\sqrt{3}/2,\infty)$. More precisely, if $k \geq 1116$, then the sign of $f_k'\left(\frac{1}{2}+it_j\right)$ is $(-1)^j$ for~$19(k+1)/50\pi \leq j \leq [k/6]-1$.
        \item For large $k$, over $27.4\%$ zeros of $f_k'$ in $F$ lie on $\delta_2$.
    \end{enumerate}
\end{theorem}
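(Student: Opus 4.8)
The plan is to establish (a) by transforming $f_k$ to the unit circle and isolating a dominant term, then deduce (b) by combining (a) with an upper bound on the total number of zeros of $f_k'$ in $F$.

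For (a): the element $\gamma_0=\begin{pmatrix}1&-1\\1&0\end{pmatrix}\in\Gamma$ fixes $\rho+1=e^{i\pi/3}$, and writing $z_j:=\tfrac12+it_j$ one checks directly that $z_j=e^{i(\pi/2-\theta_j)}/(2\sin\theta_j)$ and $\gamma_0 z_j=1-1/z_j=e^{2i\theta_j}$, a point on the unit circle. Differentiating the modularity relation $f_k(\gamma_0 z)=z^k f_k(z)$ (here $cz+d=z$ for $\gamma_0$) and rearranging yields
$$f_k'(z_j)=z_j^{-(k+2)}f_k'(e^{2i\theta_j})-k\,z_j^{-(k+1)}f_k(e^{2i\theta_j}).$$
I would then show that the first term and the "$-1$" defect of $f_k(e^{2i\theta_j})$ are negligible. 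For $19(k+1)/(50\pi)\le j\le[k/6]-1$ one has $2\theta_j=2j\pi/(k+1)\in[19/25,\pi/3]$, so $\sin(2\theta_j)\ge\sin(19/25)$; using a bound of the shape $|a_n(f_k)|\ll(\pi e/6)^k\,\mathrm{poly}(k)$ for the Fourier coefficients together with $n\ge\ell+1\sim k/12$ (so that $2\pi\sin(2\theta_j)>12\log(\pi e/6)$), the tails $\sum_{n>\ell}|a_n(f_k)|\,e^{-2\pi n\sin(2\theta_j)}$ and $\sum_{n>\ell}n\,|a_n(f_k)|\,e^{-2\pi n\sin(2\theta_j)}$ are super-exponentially small in $k$, hence $f_k(e^{2i\theta_j})=1+o(1)$ and $f_k'(e^{2i\theta_j})=o(1)$ uniformly over these $j$. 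Thus $f_k'(z_j)=-k\,z_j^{-(k+1)}(1+o(1))$. Since $z_j^{-(k+1)}=(2\sin\theta_j)^{k+1}e^{-i(k+1)(\pi/2-\theta_j)}$, $(k+1)\theta_j=j\pi$, and $k\equiv0\pmod{12}$ forces $k+1\equiv1\pmod4$, one gets $z_j^{-(k+1)}=-i(-1)^j(2\sin\theta_j)^{k+1}$; as $f_k$ has real Fourier coefficients $f_k'$ is purely imaginary on $\delta_2$, so $f_k'(z_j)/i$ is real and equals $k(-1)^j(2\sin\theta_j)^{k+1}(1+o(1))$, of sign $(-1)^j$. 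Consequently $f_k'$ changes sign on $\delta_2$ between $z_j$ and $z_{j+1}$ for each $j$ in the range, producing at least $([k/6]-1)-\lceil19(k+1)/(50\pi)\rceil\gg k$ real zeros.

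For (b): since $f_k=1+a_{\ell+1}q^{\ell+1}+\cdots$ with $a_{\ell+1}\ne0$, the quasimodular form $f_k'$ vanishes to order exactly $\ell+1=k/12+1$ at the cusp $\infty\in F$. To bound the remaining zeros I would use $f_k=\Delta^\ell P(j)$ with $P$ the monic degree-$\ell$ Faber polynomial, so that $f_k'=2\pi i\,\Delta^\ell\bigl(\ell E_2 P(j)+(DP)(j)\bigr)$, and carry out an argument-principle computation around $\partial F$ — exploiting that $f_k'$ is $1$-periodic and that $f_k'(-1/z)=z^{k+2}f_k'(z)+kz^{k+1}f_k(z)$ — to conclude that $f_k'$ has $k/12+O(1)$ zeros in $F\cap\Ha$. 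Together with the cusp this gives $k/6+O(1)$ zeros of $f_k'$ in $F$, so the proportion on $\delta_2$ is at least
$$\frac{([k/6]-1)-\lceil19(k+1)/(50\pi)\rceil}{k/6+O(1)}\;\xrightarrow[k\to\infty]{}\;\frac{\tfrac16-\tfrac{19}{50\pi}}{\tfrac16}=1-\frac{114}{50\pi}=0.2742\ldots,$$
which is $>0.274$ for all sufficiently large $k$.

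The main obstacle is the quantitative Fourier-coefficient estimate for $f_k$ in (a): one needs $|a_n(f_k)|$ small enough that $f_k(e^{2i\theta_j})-1$ and $f_k'(e^{2i\theta_j})$ are genuinely dominated by the (exponentially small but explicit) main term $k(2\sin\theta_j)^{k+1}$, and it is precisely this comparison — $2\pi\sin(2\theta_j)>12\log(\pi e/6)$ at $j=19(k+1)/(50\pi)$ — together with the polynomial error factors that forces the explicit cut-off $19(k+1)/(50\pi)$ and the explicit bound $k\ge1116$. A natural route is to write $f_k=E_k+g$ with $g\in S_k$, bound the Eisenstein part via $|a_n(E_k)|=\tfrac{2k}{|B_k|}\sigma_{k-1}(n)$ and Stirling's formula, and bound the cuspidal part using that its first $\ell$ coefficients are pinned to those of $-E_k$ together with standard estimates for cusp-form coefficients; one also needs the known nonvanishing $a_{\ell+1}(f_k)\ne0$. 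A secondary technical point is the valence count in (b), where the arc $|z|=1$ — on which $f_k$ itself vanishes — requires care in the contour integration.
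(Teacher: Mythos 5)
Your reduction in part (a) is circular, and the key estimate meant to break the circularity is false. The identity
\begin{equation*}
f_k'(z_j)=z_j^{-(k+2)}f_k'(e^{2i\theta_j})-k\,z_j^{-(k+1)}f_k(e^{2i\theta_j})
\end{equation*}
is just the derivative of $f_k(\gamma_0 z)=z^kf_k(z)$, so the two terms on the right equal $f_k'(z_j)+kz_j^{-1}f_k(z_j)$ and $-kz_j^{-1}f_k(z_j)$ respectively: they are of comparable size and nearly cancel, so nothing is gained without genuinely independent control at $e^{2i\theta_j}$. The control you propose --- that the $q$-tails force $f_k(e^{2i\theta_j})=1+o(1)$ and $f_k'(e^{2i\theta_j})=o(1)$ --- cannot hold. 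The point $e^{2i\theta_j}$ lies outside the fundamental domain (its imaginary part $\sin 2\theta_j$ drops to about $0.69$ at the left end of your range), and modularity gives $|f_k(e^{2i\theta_j})|=|z_j|^{k}\,|f_k(z_j)|=(2\sin\theta_j)^{-k}|f_k(z_j)|$, which is exponentially large in $k$ for $\theta_j$ bounded away from $\pi/6$ (e.g.\ $(2\sin 0.38)^{-k}\approx(1.35)^{k}$). Equivalently, the coefficients $a_n(f_k)$ for $n>\ell$ grow roughly like $e^{4\pi\sqrt{\ell n}}$ (already $a_2(f_{12,0})=196560$), so $\sum_{n>\ell}|a_n|e^{-2\pi n\sin 2\theta_j}$ is exponentially large rather than super-exponentially small, and your proposed bound $|a_n|\ll(\pi e/6)^k\,\mathrm{poly}(k)$ is far off. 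A symptom of the problem: your putative main term $ik(-1)^j(2\sin\theta_j)^{k+1}$ is exactly half of the true one, because the genuine main term arises from two group translates of $z_j$ (namely $-1/z_j$ and $-1/(z_j-1)$), not one; the missing half is hiding in the quantities you declared negligible.

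The paper instead works with the Duke--Jenkins contour-integral representation of $Df_{k,m}$ (Lemma 5.1): it lowers the contour from height $A'$ to $A''=0.49$, collects the residues at the two poles $\tau=-1/z$ and $\tau=-1/(z-1)$, which produce the main term $(-1)^{k/2}\frac{k}{\pi}|z|^{-k-1}\cos((k+1)\theta)$ (Lemma 5.2), and then bounds the remaining integral numerically by $\frac{k}{\pi}|z|^{-k-1}$ for $0.38\le\theta<\pi/6$ and $k\ge1116$ (Proposition 5.3); it is this numerical bound that forces the cut-offs $19(k+1)/50\pi$ and $1116$. Your part (b) is essentially the paper's: the total count $\left[\frac{k-4}{6}\right]$ of zeros of $f_k'$ in $F$ comes from the Ittersum--Ringeling valence formula together with Duke--Jenkins and Gun--Oesterl\'e (Proposition 2.4), and your limiting ratio $1-\frac{114}{50\pi}=0.2742\ldots$ agrees; but as written it still rests on the zero count you defer and, more importantly, on the broken part (a).
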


Theorem \ref{thm_gap ftn}(a) does not hold for other values of $k'=4,6,8,10,14$. For instance, if $k=12\ell+14$ is large, our proof of Theorem \ref{thm_gap ftn} shows that the function $Df_{k,0}(z)$ never vanishes for $0.38 \leq \theta \leq \theta_0$ for some appropriate $\theta_0<\pi/6$. We note that $\theta_j$ for $j=19(k+1)/50\pi$ tends to $0.38$ as $k$ approaches~$\infty$. However, this observation does not imply that Theorem \ref{thm_gap ftn}(b) does not hold for other $k'$. In fact, for $k'=14$, we speculate that if $j\ll 19(k+1)/50\pi$, it plays a similar role as in Theorem~\ref{thm_gap ftn}(a).
See Remark~\ref{rmk_for other k'} for further discussion.

We also consider real zeros in a general setting. Specifically, the following result indicates that every quasimodular form  given as the derivative of a cusp form, has real zeros.

\begin{theorem}\label{thm_cusp form 2 crit pt}
    Let $S_{k-2,\R}$ be the space of cusp forms of weight $k-2$ with real Fourier coefficients. Let $f \in DS_{k-2,\R}$ be a quasimodular form of weight $k$ and depth $1$. Then $f$ has at least two real zeros, one lying on the central line $\{z \in \Ha:\Re(z)=0\}$ and the other on $\{z \in \Ha:\Re(z)=1/2\}$.
\end{theorem}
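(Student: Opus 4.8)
The plan is to reduce the existence of real zeros of $f$ to a Rolle-type argument for the restriction of the underlying cusp form to the two vertical geodesics. Write $f=Dg$ with $g=\sum_{n\ge 1}a_nq^n\in S_{k-2,\R}$; since $f$ has depth $1$ we have $g\not\equiv 0$. Fix $x_0\in\{0,\tfrac12\}$ and consider $\varphi_{x_0}(y):=g(x_0+iy)$ for $y\in(0,\infty)$. Because $g$ has real Fourier coefficients and $e^{2\pi i n x_0}\in\{1,(-1)^n\}\subset\R$, the function $\varphi_{x_0}$ is real-valued and real-analytic; and since a vertical segment in $\Ha$ has accumulation points, $g\not\equiv 0$ forces $\varphi_{x_0}\not\equiv 0$ by the identity theorem. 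From the $q$-expansion, $\varphi_{x_0}(y)\to 0$ as $y\to\infty$.

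The one genuinely computational ingredient is the decay $\varphi_{x_0}(y)\to 0$ as $y\to 0^+$, i.e.\ the vanishing of the cusp form $g$ at the finite cusp $x_0$ (both $0$ and $\tfrac12$ being $\Gamma$-equivalent to $\infty$). For $x_0=0$, apply $S=\left(\begin{smallmatrix}0&-1\\1&0\end{smallmatrix}\right)$: the relation $g(-1/z)=z^{k-2}g(z)$ at $z=iy$ gives $g(iy)=(iy)^{-(k-2)}g(i/y)$, and the right side is $O\!\left(y^{-(k-2)}e^{-2\pi/y}\right)\to 0$ once one uses any polynomial bound on the $a_n$. For $x_0=\tfrac12$, apply $\gamma=\left(\begin{smallmatrix}1&-1\\2&-1\end{smallmatrix}\right)\in\mathrm{SL}_2(\Z)$, which sends $\tfrac12+iy$ to $\tfrac12+\tfrac{i}{4y}$; the automorphy relation then yields $g\!\left(\tfrac12+iy\right)=(2iy)^{-(k-2)}g\!\left(\tfrac12+\tfrac{i}{4y}\right)=O\!\left(y^{-(k-2)}e^{-\pi/(2y)}\right)\to 0$. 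Thus in both cases $\varphi_{x_0}$ extends continuously to $[0,\infty]$, vanishing at both endpoints.

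Since $\varphi_{x_0}$ is continuous on $(0,\infty)$, tends to $0$ at both ends, and is not identically zero, it attains a nonzero global extremum at some interior point $y_0=y_0(x_0)\in(0,\infty)$, where $\varphi_{x_0}'(y_0)=0$. But $\varphi_{x_0}'(y)=i\,g'(x_0+iy)$, and $f$ is a nonzero scalar multiple of $g'$, so $f(x_0+iy_0)=0$. Applying this with $x_0=0$ and $x_0=\tfrac12$ produces a zero of $f$ on $\{\Re(z)=0\}$ and a zero of $f$ on $\{\Re(z)=1/2\}$; being on disjoint lines, these are distinct, which proves the theorem.

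The main obstacle is a mild one: making the boundary behavior at the finite cusps $0$ and $\tfrac12$ fully rigorous — that is, selecting the correct $\Gamma$-element for each geodesic, tracking the automorphy factor, and invoking a polynomial bound on the Fourier coefficients (e.g.\ the elementary $a_n=O(n^{k-1})$, or Hecke's $a_n=O(n^{(k-2)/2})$) to see that the exponential decay dominates. Everything else is soft analysis.
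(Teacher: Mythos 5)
Your proof is correct, and it takes a genuinely different (and more uniform) route than the paper's. The paper writes $f=\mathfrak f'$ and splits on $k\bmod 4$: when $k\equiv 0\pmod 4$ the valence formula forces $\mathfrak f(i)=0$, and Rolle's theorem applied between the interior zero $i$ (resp.\ $\tfrac12+\tfrac i2$) and the cusp $i\infty$ produces the two zeros of $f$; when $k\equiv 2\pmod 4$ that anchor is unavailable, and the paper instead invokes Proposition~\ref{prop_crit.of.modularform.lying.onthe.central} via the asymptotic transformation formula of Lemma~\ref{lem_thetaf assymptotic formula, f:depth0} for $Df$ near the cusp, obtaining a sign change of $Df\left(\tfrac12+iy\right)$ between $y$ and $\tfrac{1}{4y}$. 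You avoid the case split entirely by anchoring the Rolle/extremum argument at the two cusps of each vertical geodesic, $i\infty$ and the finite cusp $0$ (resp.\ $\tfrac12$); the decay of $g$ at the finite cusp is exactly the vanishing of a cusp form at a $\Gamma$-equivalent cusp, made quantitative by the automorphy relation and any polynomial coefficient bound, and your choices of $S$ and $\left(\begin{smallmatrix}1&-1\\2&-1\end{smallmatrix}\right)$ are the right ones. This is more elementary: no valence formula, no transformation law for the depth-$1$ form $Df$ itself. What the paper's heavier machinery buys is generality beyond this theorem --- Proposition~\ref{prop_crit.of.modularform.lying.onthe.central} and Lemma~\ref{lem_thetaf assymptotic formula, f:depth0} also treat non-cuspidal $f$ (under the sign conditions on $\epsilon_f$, $\epsilon_f'$) and higher derivatives $D^jf$, situations where your argument breaks down because $g$ no longer tends to $0$ at the cusps. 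One cosmetic point: the elementary Hecke bound for a weight $k-2$ cusp form is $a_n=O(n^{(k-2)/2})$ rather than $O(n^{k-1})$, though any polynomial bound suffices for your purpose.
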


It is noteworthy that Theorem~\ref{thm_cusp form 2 crit pt} does not apply to general quasimodular forms of depth~$1$ (Proposition~\ref{thm_monotonedecreasing noncuspidal}).

Let us briefly outline the contents of this paper. In Section~\ref{sec_valences} and Section~\ref{sec_Serre}, we provide the necessary preliminaries, and Section \ref{sec_thm 1} presents the proof of Theorem \ref{thm_near Eisen'} which affirms the answer to the first question. In Section \ref{sec_thm 2}, we prove Theorem \ref{thm_gap ftn}, which gives a partial result towards answering the second question. Finally, Section \ref{sec_other remarks} offers additional discussions on the real zeros of quasimodular forms on the line $\{z \in \Ha: \Re(z)=1/2\}$ including the proof of Theorem~\ref{thm_cusp form 2 crit pt}.

\section{Valence formula}\label{sec_valences}


To prove our results, we first introduce the {\it valence formula} for depth~$1$ quasimodular forms which is developed in \cite{IR22} recently. The classical valence formula is the equation on the multiplicities of zeros of a weight $k$ modular form $f$ as follows:
\begin{align*}
    \sum_{z \in X} \frac{v_z(f)}{e_z}=\frac{k}{12}.  
\end{align*}
Here, $X$ denotes the (compactified) modular curve $X:=\Gamma \backslash (\Ha \cup \{\infty\})$, $e_z$ is the ramification index of $z$ (which equals~$1$ except for $z=i$, $\rho=e^{i\pi/3}$ and $e_i=2$, $e_{\rho}=3$), and $v_z(f)$ denotes the multiplicity of $f$ at $z$.

To state the valence formula, we introduce some notations. Let $f=f_0+f_1E_2$ be a quasimodular form of weight $k$ and depth~$1$ with real Fourier coefficients for which $f_0$ and $f_1$ are modular forms with real Fourier coefficients and having no common zeros. Let $\varphi_1, \ldots, \varphi_n$ be real numbers such that $\pi/3 \leq \varphi_1 \leq \ldots \leq \varphi_n \leq \pi/2$ and $e^{i\varphi_j}$ are zeros of $f_1$ for all $1 \leq j \leq n$, counted with multiplicity. Denote by $r(f_1)$ the sign of the first nonzero coefficient of the Taylor expansion of $f_1$ around $\rho$ (see \cite[(11)]{IR22} for precise definition), and define $w(z)$ by $1$ except for $z=i,\rho,\infty$, and $2$ for those exceptional points. We define 
\begin{align}\label{eqn_N1(f)}
    N_{\infty}(f)=\frac{1}{2}\left[\frac{k}{6}\right]-(-1)^{v_{\rho}(f_1)}r(f_1)\sum_{j=1}^n \frac{(-1)^j}{w(e^{i\varphi_j})}\sgn \left(e^{\frac{1}{2}ik\varphi_j}f(e^{i\varphi_j})\right).
\end{align}

 We adopt a slightly modified notation for the fundamental domain as follows:
\begin{align*}
    \mathcal F=\{z \in \Ha: 0\leq \Re(z) < 1, \quad |z| \geq 1 \text{ if } 0\leq\Re(z)\leq1/2, \quad |z-1| > 1 \text{ if } 1/2<\Re(z)<1\} \cup \{\infty\},  
\end{align*}
which aids in visualizing the arguments in Section~\ref{sec_thm 1}.

\begin{theorem}\cite[Theorem~1.3]{IR22}\label{thm_valence formula}
    Let $f=f_0+f_1E_2$ be a quasimodular form of weight $k$ and depth~$1$ for which $f_0$ and $f_1$ are modular forms with real Fourier coefficients and having no common zeros. Then there exist constants $N_{\infty}(f)$ that depend on $f$ such that the following holds:
    \begin{align*}
        \sum_{z \in \mathcal F} \frac{v_z(f)}{e_z}= N_{\infty}(f)
    \end{align*}
\end{theorem}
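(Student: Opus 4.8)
The plan is to adapt the classical argument-principle proof of the valence formula, integrating $\tfrac{1}{2\pi i}\tfrac{f'}{f}$ around the boundary of a truncated copy of $\mathcal F$, but replacing the modular transformation law on the bottom arcs by the depth-$1$ quasimodular one. First I would record how $f$ transforms under the inversion $z\mapsto-1/z$: since $E_2(-1/z)=z^2E_2(z)-\tfrac{6iz}{\pi}$, one obtains $f(-1/z)=z^{k-1}\bigl(zf(z)-\tfrac{6i}{\pi}f_1(z)\bigr)$. Combined with the hypothesis that $f_0,f_1$ have real Fourier coefficients, this yields the key identity on the unit circle: writing $F(\theta)=e^{ik\theta/2}f(e^{i\theta})$ and $G(\theta)=e^{i(k-2)\theta/2}f_1(e^{i\theta})$, the function $G$ is real (since $f_1$ is modular of even weight with real coefficients), while $\overline{f(z)}=f(-\bar z)$ combined with the transformation above gives $F(\theta)-\overline{F(\theta)}=\tfrac{6i}{\pi}G(\theta)$, hence $\Im F(\theta)=\tfrac{3}{\pi}G(\theta)$. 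Thus the planar curve $\theta\mapsto F(\theta)$ meets the real axis exactly at the arguments $\varphi_j$ of the zeros of $f_1$ on the arc, and at each such point $F(\varphi_j)=e^{ik\varphi_j/2}f(e^{i\varphi_j})$ is a nonzero real number because $f_0$ and $f_1$ have no common zero; using also periodicity, it follows that $f$ has no zero on the two bottom arcs of $\mathcal F$, nor at $i$ or $\rho$.

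Next I would run the argument principle: $\tfrac{1}{2\pi i}\oint_{\partial\mathcal F_T}\tfrac{f'}{f}\,dz$ equals the number of zeros of $f$, with multiplicity, inside $\mathcal F_T=\{z\in\mathcal F:\Im z\le T\}$, which for $T$ large is $\sum_{z\in\mathcal F,\,z\ne\infty}v_z(f)$ (a finite sum, $f$ being not identically zero). Here $\partial\mathcal F_T$ is taken indented by tiny arcs around any zeros of $f$ on the vertical edges (none needed at $i$ or $\rho$). Its pieces are: the top segment at height $T$, contributing $-v_\infty(f)$ via the $q$-expansion of $f$; the vertical edges $\Re z=0$ and $\Re z=1$, which cancel against each other by the periodicity $f(z+1)=f(z)$ once the indentations are matched; and the two bottom arcs, namely the left arc $\{e^{i\theta}:\pi/3\le\theta\le\pi/2\}$ and the right arc $\{1+e^{i\psi}:\pi/2\le\psi\le2\pi/3\}$, which are matched by $TS$ since $f(1+e^{i\psi})=f(e^{i\psi})=f(-1/e^{i(\pi-\psi)})$. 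Folding the right arc onto the left with $TS$ and substituting the transformation above, the combined arc integral splits into a ``modular'' part coming from the factor $z^{k-1}\cdot z=z^k$, which by the classical Rankin--Swinnerton-Dyer computation (together with the corner angles at $i$, $\rho$, $1+i$) works out to $\tfrac12[k/6]$, plus a genuinely new part coming from the correction term $-\tfrac{6i}{\pi}f_1(z)$.

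To evaluate the new part I would analyze the variation of argument of $F(\theta)=e^{ik\theta/2}f(e^{i\theta})$ on $\theta\in[\pi/3,\pi/2]$ in the style of Rankin--Swinnerton-Dyer: since $\Im F=\tfrac{3}{\pi}G$, the curve stays on one side of the real axis except for crossings at the $\varphi_j$, a crossing being a genuine sign change exactly when $G$ changes sign there (i.e.\ when $e^{i\varphi_j}$ is a zero of $f_1$ of odd order), and each such crossing contributes $\pm\pi$ to the argument, with sign governed by $\sgn\bigl(e^{ik\varphi_j/2}f(e^{i\varphi_j})\bigr)$ and by the direction of the crossing. Starting the count from the sign of $G$ just above $\rho$ --- which is what $r(f_1)$ records, with the parity correction $(-1)^{v_\rho(f_1)}$ when $\rho$ is itself a zero of $f_1$ --- and combining the contributions of the two folded half-arcs, one is left with exactly $-(-1)^{v_\rho(f_1)}r(f_1)\sum_{j=1}^n\tfrac{(-1)^j}{w(e^{i\varphi_j})}\sgn\bigl(e^{ik\varphi_j/2}f(e^{i\varphi_j})\bigr)$, the factor $w(e^{i\varphi_j})=2$ at $e^{i\varphi_j}\in\{i,\rho\}$ accounting for a zero of $f_1$ sitting at an arc endpoint. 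Adding the modular part $\tfrac12[k/6]$, letting $T\to\infty$, and rearranging the identity $\sum_{z\in\mathcal F,\,z\ne\infty}v_z(f)=-v_\infty(f)+(\text{arc terms})$ together with $v_i(f)=v_\rho(f)=0$ gives $\sum_{z\in\mathcal F}\tfrac{v_z(f)}{e_z}=N_\infty(f)$ with $N_\infty(f)$ as in \eqref{eqn_N1(f)}.

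The main obstacle is the sign and normalization bookkeeping in this last step: orienting the fold of the right arc onto the left correctly, tracking how the factor $e^{ik\theta/2}$ interacts with the quasimodular correction, handling the degenerate configurations in which $f_1$ (hence $G$) vanishes at an arc endpoint $i$ or $\rho$ --- where an indentation arc meets a sign change of $G$ --- and pinning down the global constants $r(f_1)$ and $(-1)^{v_\rho(f_1)}$. These are precisely the places where an off-by-$\pi$ in the argument count or a sign slip would corrupt $N_\infty(f)$, so the delicate part of the argument lives at the elliptic points and corners, the open arcs themselves being clean once the identity $\Im F=\tfrac{3}{\pi}G$ is in hand.
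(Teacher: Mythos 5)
The paper does not prove this statement: it is imported verbatim from Ittersum--Ringeling \cite[Theorem~1.3]{IR22} and used as a black box, so there is no in-paper proof to compare against. Measured against the actual proof in \cite{IR22}, your strategy is the right one and is essentially theirs: a contour/argument-principle computation over a truncated fundamental domain, with the bottom arcs controlled by the identity $\Im\bigl(e^{ik\theta/2}f(e^{i\theta})\bigr)=\tfrac{3}{\pi}e^{i(k-2)\theta/2}f_1(e^{i\theta})$, which you derive correctly from $E_2(-1/z)=z^2E_2(z)-\tfrac{6iz}{\pi}$ and the reality of the Fourier coefficients. The consequences you draw from it are also correct: the crossings of the real axis by $\theta\mapsto e^{ik\theta/2}f(e^{i\theta})$ occur exactly at the zeros $\varphi_j$ of $f_1$ on the arc, the values there are nonzero reals by the no-common-zero hypothesis, and $f$ itself cannot vanish on the arcs or at $i$, $\rho$ (note that for $k\equiv 4\pmod 6$ the hypothesis is in fact never satisfiable, since the classical valence formula forces $f_0$ and $f_1$ to share a zero at $\rho$; this is consistent but worth flagging).

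The one substantive criticism is that the proposal stops exactly where the theorem's content lives. Everything you have actually computed is the ``easy'' half; the identity $N_\infty(f)=\tfrac12[k/6]-(-1)^{v_\rho(f_1)}r(f_1)\sum_j\tfrac{(-1)^j}{w(e^{i\varphi_j})}\sgn(e^{ik\varphi_j/2}f(e^{i\varphi_j}))$ requires carrying out the argument-variation count: showing that the corner contributions at $i$, $\rho$, $1+i$ convert $\tfrac{k}{12}$ into $\tfrac12[k/6]$ (the discrepancy $0$, $\tfrac16$, $\tfrac13$ according to $k\bmod 6$ comes precisely from the behaviour of the crossing structure at the arc endpoints, not from a separate ``classical RSD'' term), that consecutive crossings alternate in direction (whence the $(-1)^j$), that the base sign is $(-1)^{v_\rho(f_1)}r(f_1)$, and that endpoint zeros of $f_1$ contribute with weight $\tfrac12$. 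You name these as the delicate points but do not execute any of them, and an error of $\pm\pi$ in any one of them changes $N_\infty(f)$ by $\pm\tfrac12$. As a proof plan this is sound and faithful to \cite{IR22}; as a proof it is incomplete, with the gap concentrated in the quantitative evaluation of the arc integral.
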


\begin{remark}\label{rmk_F and mathcal F}
    Theorem~\ref{thm_valence formula} is proven more generally in \cite{IR22}; it counts the number of zeros in arbitrary fundamental domains rather than the standard fundamental domain only. We also remark that the definition of the standard fundamental domain $F$ and its modification $\mathcal F$ are slightly different from the standard fundamental domain defined in \cite{IR22}. Nonetheless, Theorem~\ref{thm_valence formula} remains valid since the order of zeros of quasimodular forms remains unchanged under translation $z \mapsto z+1$. Similarly, although the angles $\varphi_1,\ldots,\varphi_n$ in \cite{IR22} are taken to satisfy $\pi/2 \leq \varphi_n \leq \ldots \leq \varphi_1 \leq 2\pi/3$, which differs from our choice, Theorem~\ref{thm_valence formula} still holds since $f_1$ is assumed to have real Fourier coefficients, making the order of its zeros invariant under translation $z \mapsto -\bar{z}$.
\end{remark}


The following is the special case of Theorem \ref{thm_valence formula}.

\begin{theorem}\cite[Theorem~1.1]{IR22}\label{thm_valence formula specialized}
    If $f$ is a modular form of weight $k$ with real Fourir coefficients, then
    \begin{align*}
        \sum_{z \in \mathcal F} \frac{v_z(f')}{e_z}=\frac{k}{12}+
            C(f)+\frac{1}{3}\delta_{f(\rho)=0},
    \end{align*}
    where $C(f)$ is the number of distinct zeros $z$ on the unit circle in $\mathcal F$ counted with weight $e_z^{-1}$, and $\delta_{f(\rho)=0}$ is $1$ if $f(\rho)=0$ and $0$ otherwise.
\end{theorem}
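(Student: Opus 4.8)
I would derive this as the special case of Theorem~\ref{thm_valence formula} obtained by replacing $f$ with its derivative $Df:=q\frac{d}{dq}f=\frac{1}{2\pi i}f'$ (the normalization with real Fourier coefficients; note $v_z(f')=v_z(Df)$, and the statement is vacuous for constant $f$), which is quasimodular of weight $k+2$ and depth $1$. Via the Serre derivative $\vartheta_k f:=Df-\frac{k}{12}E_2 f\in M_{k+2}$ one has $Df=\vartheta_k f+\frac{k}{12}f\cdot E_2$, so the natural components in the sense of Theorem~\ref{thm_valence formula} are $f_0=\vartheta_k f$ and $f_1=\frac{k}{12}f$. These fail to be coprime: since $E_2$ is nonvanishing on $\mathcal F$, a local computation gives $v_z(\vartheta_k f)=v_z(f)-1$ at every finite zero $z$ of $f$ (elliptic points included) and $v_\infty(\vartheta_k f)=v_\infty(f)$ unless $v_\infty(f)=k/12$, i.e.\ unless $f=c\Delta^{k/12}$ and $\vartheta_k f\equiv 0$. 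So I would first reduce to lowest terms: the common divisor $D_0:=\min(\operatorname{div}f,\operatorname{div}\vartheta_k f)$ is effective of degree $\frac{k}{12}-C^\ast(f)$, where $C^\ast(f):=\sum_{z\in\mathcal F\setminus\{\infty\},\,f(z)=0}e_z^{-1}$ counts the distinct finite zeros of $f$ weighted by $e_z^{-1}$; its orders at $i$ and $\rho$ satisfy the congruences forced by the elliptic points, so $D_0=\operatorname{div}g$ for a suitable $g\in M_{k-12C^\ast(f)}$. Then $Df/g=\vartheta_k f/g+\frac{k}{12}(f/g)E_2\in\widetilde{M}_{2+12C^\ast(f)}^{(\leq 1)}$ has coprime components, Theorem~\ref{thm_valence formula} applies, and $\sum_{z\in\mathcal F}\frac{v_z(f')}{e_z}=\deg D_0+N_\infty(Df/g)=\frac{k}{12}-C^\ast(f)+N_\infty(Df/g)$; it thus remains to prove $N_\infty(Df/g)=C^\ast(f)+C(f)+\frac13\delta_{f(\rho)=0}$.

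To evaluate $N_\infty(Df/g)$ I would unwind~\eqref{eqn_N1(f)}. Its ``$f_1$''-part $\frac{k}{12}f/g$ has divisor $\operatorname{div}f-D_0$, the reduced divisor of the finite zeros of $f$, so the angles $\varphi_1<\dots<\varphi_n$ in~\eqref{eqn_N1(f)} are precisely the arguments of the distinct zeros of $f$ on the arc $\{e^{i\varphi}:\pi/3\le\varphi\le\pi/2\}$, whence $\sum_j w(e^{i\varphi_j})^{-1}=C(f)+\frac16\delta_{f(\rho)=0}$ (the only discrepancy between $w(\cdot)^{-1}$ and $e_\cdot^{-1}$ on the arc being $\frac12$ against $\frac13$ at $\rho$, since $w(i)=e_i=2$), and likewise $\frac12\left[\frac{2+12C^\ast(f)}{6}\right]=C^\ast(f)+\frac16\delta_{f(\rho)=0}$. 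At each $\varphi_j$ one has $(Df/g)(e^{i\varphi_j})=(\vartheta_k f/g)(e^{i\varphi_j})$ since $f(e^{i\varphi_j})=0$ annihilates the $E_2$-term, so $\sigma_j:=e^{\frac12 i(2+12C^\ast(f))\varphi_j}(Df/g)(e^{i\varphi_j})$ is real by the Rankin--Swinnerton-Dyer observation that $e^{\frac12 iw\theta}h(e^{i\theta})\in\R$ for real-coefficient $h\in M_w$. With $F(\theta):=e^{\frac12 ik\theta}f(e^{i\theta})$ and $G(\theta):=e^{\frac12 i(k-12C^\ast(f))\theta}g(e^{i\theta})$ both real-valued, the identity $e^{\frac12 i(k+2)\theta}Df(e^{i\theta})=-\frac{1}{2\pi}F'(\theta)+\frac{ik}{4\pi}F(\theta)$, combined with l'H\^opital at $\varphi_j$ — where $F$ vanishes to order $m_j:=v_{e^{i\varphi_j}}(f)$ and $g$ to order $m_j-1$ — yields $\sgn\sigma_j=-\sgn F(\varphi_j^+)\sgn G(\varphi_j^+)$. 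Neither $F$ nor $G$ vanishes strictly between consecutive $\varphi_j$, and the orders of $F$ and $g$ at $\varphi_j$ have opposite parity, so $\sgn F(\varphi_j^+)\sgn G(\varphi_j^+)$ alternates with $j$; hence $(-1)^j\sgn\sigma_j$ is constant, and evaluating it at $j=1$ gives $(-1)^j\sgn\sigma_j=-(-1)^{v_\rho(f_1)}r(f_1)$ (using $v_\rho(f/g)=\delta_{f(\rho)=0}$ and the definition of $r$). Therefore the second term of~\eqref{eqn_N1(f)} equals $\sum_j w(e^{i\varphi_j})^{-1}=C(f)+\frac16\delta_{f(\rho)=0}$, and adding the floor term gives $N_\infty(Df/g)=C^\ast(f)+C(f)+\frac13\delta_{f(\rho)=0}$, as required.

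The conceptual core is the Rankin--Swinnerton-Dyer sign alternation, which is robust; the real difficulty is the bookkeeping at the elliptic points $i,\rho$ and the cusp, where three weightings ($e_z^{-1}$ in the valence sum, $w(e^{i\varphi})^{-1}$ inside~\eqref{eqn_N1(f)}, and the floor term) and the two behaviors of $D$ — order-preserving at $\infty$, order-dropping at finite points — must be reconciled. I would organize the argument by first treating the genericity case $f(\rho),f(i),f(\infty)\ne 0$ with $f$ having only simple zeros, where $g=1$, every $w(e^{i\varphi_j})=1$, $\varphi_1\in(\pi/3,\pi/2)$, and the collapse is immediate because $\sgn\sigma_1=-\sgn F'(\varphi_1)=\sgn F(\pi/3)=r(f_1)$; and then restoring full generality via the divisor reduction above, checking that the $\frac12-\frac13=\frac16$ gap at $\rho$ (there is none at $i$) together with the compensating shift of the floor term reproduces exactly the correction $\frac13\delta_{f(\rho)=0}$, and that the vanishing of $Df$ at $i$ whenever $v_i(f)\ge 1$ is consistent with $v_i(Df)=v_i(f)-1$. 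An alternative route that bypasses~\eqref{eqn_N1(f)} — splitting $\sum_z\frac{v_z(Df)}{e_z}$ into the contribution of the zeros of $f$, which the classical valence formula makes $\frac{k}{12}-C^\ast(f)$, plus the zero-count in $\mathcal F$ of the weight-$2$ meromorphic quasimodular form $Df/f$, and observing from the same identity that $Df$ has no zeros on the open arc other than at multiple zeros of $f$ — reduces to the same computation but may make the elliptic-point analysis more transparent.
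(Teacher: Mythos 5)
The paper does not actually prove this statement: it is imported verbatim from \cite[Theorem~1.1]{IR22} and merely declared to be ``the special case of Theorem~\ref{thm_valence formula}.'' Your proposal fills in exactly that reduction, and the skeleton is sound: the decomposition $Df=\vartheta f+\frac{k}{12}fE_2$, the observation that the components fail to be coprime precisely at the multiple zeros of $f$, the division by $g$ with $\operatorname{div}g=\min(\operatorname{div}f,\operatorname{div}\vartheta f)$ so that all zeros of $f_1=\frac{k}{12}f/g$ on the arc become simple, and the bookkeeping $\frac12\left[\frac{2+12C^\ast(f)}{6}\right]=C^\ast(f)+\frac16\delta_{f(\rho)=0}$ together with $\sum_j w(e^{i\varphi_j})^{-1}=C(f)+\frac16\delta_{f(\rho)=0}$ all check out and recombine to the stated right-hand side. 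The Rankin--Swinnerton-Dyer alternation argument (opposite parities of the orders of $F$ and $G$ at each $\varphi_j$ forcing $(-1)^j\sgn\sigma_j$ to be constant) is also correct.

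Two points are genuinely unfinished. First, the degenerate case $f=c\Delta^{k/12}$, which you flag in passing and then drop: there $\vartheta f\equiv 0$, so $f_0\equiv 0$ and no amount of dividing by a common factor produces coprime components -- Theorem~\ref{thm_valence formula} simply does not apply. The claim then reduces to the separate (true but nontrivial) fact that $E_2$ has no zero in $\mathcal F$, which must be supplied independently. Second, the base case of the sign computation, $\sgn\sigma_1=(-1)^{v_\rho(f_1)}r(f_1)$, is the one step that cannot be verified from this paper: $r(\cdot)$ is only defined by reference to \cite[(11)]{IR22}, and its normalization (Taylor expansion in which local variable, at which point of the arc) is exactly what fixes this sign. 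Your generic-case identity $\sgn\sigma_1=-\sgn F'(\varphi_1)=\sgn F(\pi/3)$ is fine, but equating $\sgn F(\pi/3)$ with $r(f_1)$, and the extension to $\varphi_1=\pi/3$ when $f(\rho)=0$, is asserted rather than derived. These are gaps in rigor rather than in strategy; the route you take is the one the paper implicitly invokes.
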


In Theorem~\ref{thm_valence formula specialized}, $\mathcal F$ can be replaced to $F$ as explained in Remark~\ref{rmk_F and mathcal F}. Theorem \ref{thm_valence formula specialized} provides the number of zeros of $f_{k,0}'$ in $F$ as shown in the following proposition:

\begin{proposition}\label{prop_number of zeros of f_k,0}
The number of zeros $z$ of $f_{k}'$ in $F$ counted with multiplicities and with weight~$e_z^{-1}$ is equal to $\left[\frac{k-4}{6}\right]+\delta_{k \equiv 2 \pmod 6}$, where $\delta_{k \equiv 2 \pmod 6}$ is $1$ if $k \equiv 2 \pmod 6$ and $0$ otherwise.

\end{proposition}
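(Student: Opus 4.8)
The plan is to apply Theorem~\ref{thm_valence formula specialized} to the Gap function $f = f_{k,0}$ and carefully evaluate the correction terms $C(f)$ and $\delta_{f(\rho)=0}$, then simplify the resulting arithmetic expression according to the residue $k' = k - 12\ell \in \{0,4,6,8,10,14\}$.

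First I would recall the key input from Duke and Jenkins \cite{DJ08}: every zero of $f_{k,0}$ in $F$ lies on the unit circle $\delta_3$, and moreover these zeros are simple (this simplicity, or at least a count of distinct zeros, is what makes $C(f)$ computable). By the classical valence formula, $\sum_{z \in \mathcal F} v_z(f_{k,0})/e_z = k/12$, since $f_{k,0}$ is a holomorphic modular form normalized with constant term $1$ (so $v_\infty = 0$). Writing $k = 12\ell + k'$, the zeros of $f_{k,0}$ on the arc $\{e^{i\theta} : \pi/3 \le \theta \le \pi/2\}$ of $F$, together with any contribution from $z = i$ (weight $1/2$) and $z = \rho$ (weight $1/3$), account for this total. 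The number $C(f_{k,0})$ of distinct unit-circle zeros counted with weight $e_z^{-1}$ is therefore essentially $k/12$ itself, corrected only by the possible presence of $i$ or $\rho$ among the zeros, and $\delta_{f_{k,0}(\rho)=0}$ must be read off from the same case analysis. Concretely, one checks: $f_{k,0}(\rho) = 0$ precisely when $k' \in \{4, 8\}$ (equivalently $\ell \cdot 3 + \text{something}$; this follows from the known order of vanishing of the standard basis elements at $\rho$, or from $\dim M_k$ considerations), and $f_{k,0}(i) = 0$ precisely when $k' \in \{6, 10\}$ — the residues for which $\dim M_k$ forces a zero at the elliptic points. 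For each of the six residues $k'$, one then computes $C(f_{k,0})$ exactly in terms of $\ell$ and substitutes into
\begin{align*}
    \sum_{z \in F} \frac{v_z(f_{k,0}')}{e_z} = \frac{k}{12} + C(f_{k,0}) + \frac{1}{3}\delta_{f_{k,0}(\rho)=0}.
\end{align*}

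The main obstacle — really the only nontrivial point — is the bookkeeping at the elliptic points $i$ and $\rho$, where the weights $e_z^{-1} = 1/2, 1/3$ interact with the residue classes, and where one must correctly separate "multiplicity of $f_{k,0}$ at $z$" from "multiplicity of $f_{k,0}'$ at $z$". I expect each case to reduce to an identity of the shape $k/12 + C(f_{k,0}) + \tfrac13\delta_{f_{k,0}(\rho)=0} = \lfloor (k-4)/6 \rfloor + \delta_{k \equiv 2 \,(6)}$; for example, for $k' = 0$ one has $C(f_{k,0}) = \ell$ (no zero at $i$ or $\rho$), giving $2\ell - ?$, which one matches against $\lfloor (12\ell-4)/6\rfloor = 2\ell - 1$, so in fact $C(f_{k,0}) = \ell$ must be adjusted by noting $k/12 = \ell$ and $C = \ell - 1$ because the arc of $F$ contains only $\ell - 1$ genuine interior zeros when... — in any event, the verification is a finite check over $k' \in \{0,4,6,8,10,14\}$, each case a short computation with floor functions. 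Once all six cases are confirmed to collapse to the uniform formula $\lfloor (k-4)/6 \rfloor + \delta_{k \equiv 2 \,(6)}$, the proposition follows.

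I would organize the write-up as: (1) state $\sum v_z(f_{k,0})/e_z = k/12$ and cite \cite{DJ08} for location and simplicity of zeros; (2) determine $\delta_{f_{k,0}(\rho)=0}$ and whether $f_{k,0}(i)=0$ for each $k'$; (3) deduce $C(f_{k,0})$ in each case; (4) substitute into Theorem~\ref{thm_valence formula specialized} and simplify, presenting the six cases in a short table or displayed list, and observe the common value.
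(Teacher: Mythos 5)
Your route is genuinely different from the paper's: the paper never computes $C(f_k)$ or $\delta_{f_k(\rho)=0}$ case by case. Instead it observes that, by \cite{DJ08} all zeros of $f_k$ lie on the unit circle, so the classical valence formula forces the zero data entering the right-hand side of Theorem~\ref{thm_valence formula specialized} to coincide with that of $E_k$; hence $C(f_k)=C(E_k)$ and $\delta_{f_k(\rho)=0}=\delta_{E_k(\rho)=0}$, and the answer is read off from the known count of zeros of $E_k'$ in \cite[Theorem~2]{GO22}. That comparison is the whole proof, and it is what saves one from the six-residue bookkeeping you propose to do by hand.

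Your direct computation could in principle be carried out, but as written it contains two concrete errors that would derail it. First, your determination of the forced vanishing at the elliptic points is wrong: the minimal orders dictated by the weight are $(v_i,v_\rho)\geq(0,0),(0,1),(1,0),(0,2),(1,1),(1,2)$ for $k'=0,4,6,8,10,14$ respectively, so $f_{k,0}(\rho)=0$ for $k'\in\{4,8,10,14\}$ (not only $\{4,8\}$) and $f_{k,0}(i)=0$ for $k'\in\{6,10,14\}$ (not only $\{6,10\}$); with your lists, $C(f_{k,0})$ and $\delta_{f_{k,0}(\rho)=0}$ come out wrong for $k'=10$ and $k'=14$, and those are exactly the residues where $\delta_{k\equiv 2\pmod 6}$ is in play. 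Second, the one case you actually attempt ($k'=0$) does not close: you get $\frac{k}{12}+C(f_k)=2\ell$ against the target $2\ell-1$ and then wave at an adjustment (``$C=\ell-1$ because\ldots --- in any event''). The discrepancy is real and is not fixed by changing $C$; it comes from how the zero of $f_k'$ at the cusp is treated in the valence formula of \cite{IR22}, a point your write-up never addresses and which is more delicate for $f_k$ than for $E_k$ since $v_\infty(Df_k)=\ell+1$ while $v_\infty(DE_k)=1$. Until the elliptic-point table is corrected and the cusp contribution is pinned down, the ``finite check over $k'$'' you defer to cannot be completed.
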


\begin{proof}
According to Theorem \ref{thm_valence formula specialized}, the number of zeros of $f_k'$ (counted with multiplicities and with weith $e_z^{-1}$) in $F$ is
\begin{align*}
\frac{k}{12}+C(f_k)+\frac{1}{3}\delta_{f_k(\rho)=0}.
\end{align*}
By \cite[Theorem~1]{DJ08} and the classical valence formula, it is the same as
\begin{align*}
\frac{k}{12}+C(E_k)+\frac{1}{3}\delta_{E_k(\rho)=0},
\end{align*}
which is the number of zeros of $E_k'$ (counted with multiplicities and with weight $e_z^{-1}$) in $F$. Referring to \cite[Theorem~2]{GO22}, it equals $\left[\frac{k-4}{6}\right]+\delta_{k \equiv 2 \pmod 6}$. 
\end{proof}

\section{Preliminaries}\label{sec_Serre}

Let $D:=\frac{1}{2\pi i}\frac{d}{dz}=q\frac{d}{dq}$ be the normalized derivation. The space of quasimodular forms of weight $k$ can be written as (cf. \cite[Theorem~4.2]{Roy12})
\begin{align*}
    \widetilde{M}_k=\widetilde{M}_k^{(\leq k/2)}=\bigoplus_{j=0}^{\frac{k}{2}-2} D^j M_{k-2j} \oplus \C D^{\frac{k}{2}-1}E_2.
\end{align*}
We further have
\begin{align*}    \widetilde{M}_{k,\R}=\widetilde{M}_{k,\R}^{(\leq k/2)}=\bigoplus_{j=0}^{\frac{k}{2}-2} D^j M_{k-2j,\R} \oplus \R D^{\frac{k}{2}-1}E_2.
\end{align*}
Indeed, if we consider an involution $\imath : f(\tau) \mapsto \overline{f(-\overline{\tau})}$ on $\widetilde{M}_{k}^{(\leq k/2)}$ which is an anti-linear map, then each direct summand $D^j M_{k-2j}$ and $\C D^{\frac{k}{2}-1}E_2$ in the above are invariant under $\imath$. On the other hand, the subspace $\widetilde{M}_{k}^{\imath}$ of $\widetilde{M}_{k}$ consisting of the elements fixed by $\imath$ is exactly~$\widetilde{M}_{k,\R}$. Since $D=q\frac{q}{dq}$, if we write $f=\sum_{j=0}^{\frac{k}{2}-2}D^j f_j+cD^{\frac{k}{2}-1}E_2 \in \widetilde{M}_{k,\R}$ where $f_j \in M_{k-2j}$ and $c \in \mathbb C$, then 
\begin{align*}
    f=\imath(f)=\sum_{j=0}^{\frac{k}{2}-2}D^j \imath(f_j)+\overline{c}D^{\frac{k}{2}-1}E_2,
\end{align*}
so $f_j=\imath(f_j)$ and $\overline{c}=c$, i.e., $f_j \in M_{k-2j,\mathbb R}$ and $c \in \mathbb R$.


\

We define the Serre derivative for arbitrary depth $p$ quasimodular forms by $\vartheta:=D-\frac{k-p}{12}E_2$ (the notation $\vartheta$ is different from $\theta$ which usually denotes the angle in this paper. Some authors used $\theta$ to denote the normalized derivation, which is not adapted in this paper). 

\begin{lemma}\label{lem_thetafhasniceform-arbitrarydepth}
Let $f$ be a quasimodular form of weight $k$ and depth $p$. Then $\vartheta f$ are quasimodular form of weight $k+2$ and depth $p$. Furthermore, $f$ is cuspidal if and only if $\vartheta f$ is cuspidal.
\end{lemma}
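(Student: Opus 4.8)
The plan is to prove both assertions by tracking how the defining data of a depth $p$ quasimodular form transforms under the operator $\vartheta = D - \frac{k-p}{12}E_2$. First I would recall the two facts we need about $D$ and $E_2$ themselves: $D$ sends a weight $k$ depth $p$ quasimodular form to a weight $k+2$ depth $p+1$ quasimodular form (with an explicit formula for the new $Q_i$'s in terms of the old ones, obtained by differentiating the transformation law $f|_k\gamma = \sum_{i=0}^p Q_i(f)X(\gamma)^i$ and using $\frac{d}{dz}X(\gamma) = -cX(\gamma)^2/(cz+d)$ together with the cocycle identity), and $E_2$ is a weight $2$ depth $1$ quasimodular form with $E_2|_2\gamma = E_2 + \frac{12}{2\pi i}X(\gamma) = E_2 - \frac{6i}{\pi}X(\gamma)$, i.e. $Q_0(E_2) = E_2$ and $Q_1(E_2) = c_0$ for an explicit nonzero constant $c_0$. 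Multiplying a depth $p$ form by $E_2$ gives in general a depth $p+1$ form; the whole point of the Serre derivative is that the top-depth contributions of $Df$ and of $\frac{k-p}{12}E_2 f$ cancel.

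The key computational step is to write out $(\vartheta f)|_{k+2}\gamma$ using the slash-operator cocycle $(\cdot)|_{k+2}\gamma = (\cdot)|_{k}\gamma \cdot (\text{weight-}2\text{ factor})$ appropriately, expand both $Df|_{k+2}\gamma$ and $E_2f|_{k+2}\gamma = (E_2|_2\gamma)(f|_k\gamma)$ as polynomials in $X(\gamma)$, and check that the coefficient of $X(\gamma)^{p+1}$ vanishes. Concretely, if $f|_k\gamma = \sum_{i=0}^p Q_i(f)X^i$ with $X = X(\gamma)$, then differentiating gives a term $\sum_i Q_i'(f)X^i$ plus cross terms; the $X^{p+1}$-term coming from $Df$ is proportional to $Q_p(f)$ times a constant, and the $X^{p+1}$-term coming from $-\frac{k-p}{12}E_2 f = -\frac{k-p}{12}(E_2 + c_0 X)(\sum Q_i(f)X^i)$ is $-\frac{k-p}{12}c_0 Q_p(f)X^{p+1}$. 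The normalization $\frac{k-p}{12}$ is exactly what makes these cancel — this is the standard computation underlying the Serre derivative for higher depth, and I would present it as a short direct calculation rather than invoke it as a black box, since the weight appearing in the correction term is $k-p$ rather than the usual $k$. One must also check the growth/holomorphy-at-cusps condition (ii), but that is immediate since $D$ and multiplication by $E_2$ both preserve polynomial boundedness.

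For the cuspidality statement, I would use that a quasimodular form is cuspidal precisely when all its components $Q_i(f)$ (equivalently, in the Kaneko–Koike / structure description, all the modular forms appearing when one decomposes $f$ via the isomorphism $\widetilde M_k^{(\le p)} \cong \bigoplus_j D^jM_{k-2j}\oplus \cdots$, or simply when the constant term of the $q$-expansion of each $Q_i(f)$ vanishes) are cusp forms; equivalently $f$ vanishes at $\infty$, which for quasimodular forms can be read off the $q$-expansion constant term. On $q$-expansions, $D = q\frac{d}{dq}$ kills the constant term and never creates one, while $E_2 = 1 - 24\sum \sigma_1(n)q^n$ has constant term $1$, so $(\vartheta f)(q) = Df - \frac{k-p}{12}E_2 f$ has constant term $-\frac{k-p}{12}\,a_0(f)$ where $a_0(f)$ is the constant term of $f$. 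Hence if $f$ is cuspidal ($a_0(f)=0$, and likewise for the higher $Q_i$'s by the same argument applied componentwise) then $\vartheta f$ is cuspidal. Conversely, if $k = p$ the factor vanishes and one must argue separately, but for quasimodular forms $k \ge p \ge 0$ and the only issue is $k=p$, which forces small weight; I would handle this by noting $k=p$ cannot occur for a nonzero depth-$p$ form except in degenerate low-weight cases (or simply state the equivalence for $k>p$, which is the range of interest), and for $k>p$ the constant term of $\vartheta f$ is a nonzero multiple of that of $f$, giving the reverse implication. I expect the main obstacle to be bookkeeping: getting the $X(\gamma)$-polynomial expansion of $Df|_{k+2}\gamma$ exactly right (the cross terms $\binom{}{}$ between $Q_i(f)$ and the derivative of $X^i$) so that the cancellation at degree $p+1$ is manifest, and making sure the cuspidality argument is phrased in terms of all the $Q_i$ components rather than just $f$ itself — but both are routine once the weight-$(k-p)$ normalization is pinned down.
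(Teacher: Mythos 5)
Your argument is correct, and for the first assertion it takes a genuinely different route from the paper. The paper decomposes $f = f_0E_2^p + f_1$ with $f_0 \in M_{k-2p}$ and $f_1$ of depth $\le p-1$, applies Ramanujan's identity $DE_2 = \tfrac{1}{12}(E_2^2 - E_4)$, and reads off $\vartheta f$ as an explicit combination $(\vartheta f_0)E_2^p + Df_1 - \tfrac{k-p}{12}E_2f_1 - \tfrac{p}{12}E_2^{p-1}E_4f_0$ of manifestly depth-$\le p$ pieces. You instead expand $(\vartheta f)|_{k+2}\gamma$ as a polynomial in $X(\gamma)$: differentiating $f|_k\gamma = \sum_i Q_i(f)X^i$ gives a top term $\tfrac{k-p}{2\pi i}Q_p(f)X^{p+1}$, and $(E_2|_2\gamma)(f|_k\gamma)$ gives $\tfrac{12}{2\pi i}Q_p(f)X^{p+1}$, so the normalization $\tfrac{k-p}{12}$ kills the degree-$(p+1)$ coefficient exactly. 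Your version makes the reason for the weight-$(k-p)$ normalization transparent and stays entirely within the defining transformation law; the paper's version buys an explicit closed formula for $\vartheta f$ in the $E_2$-power basis (used implicitly elsewhere, e.g.\ to identify $f_1 = \tfrac{k}{12}E_k$ when $f = DE_k$). Neither write-up fully verifies that the depth is \emph{exactly} $p$ rather than $\le p$, so you are no worse off there. For cuspidality your argument is essentially the paper's: $Df$ is cuspidal, so the constant term of $\vartheta f$ is $-\tfrac{k-p}{12}$ times that of $f$. You are right to flag the degenerate case $k=p$ (which forces $k=p=0$, $f$ constant, where the equivalence genuinely fails for $f$ a nonzero constant); the paper silently ignores this, and your explicit exclusion of it is a small improvement rather than a gap.
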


\begin{proof}
The assertion is well-known in the literature, but we record the proof for clarity. For $p=0$, see \cite[p.48]{BVHZ08}. For $p \geq 1$, write
$f=f_0 E_2^{p}+f_1$
for a modular form $f_0$ of weight $k-2p$ and a quasimodular form $f_1$ of weight $k$ and depth $\leq p-1.$ Then
\begin{align*}
    D f&=D (f_0 E_2^{p}) + D f_1 \\
    &=(D f_0) E_2^{p}+\frac{p f_0}{12}E_2^{p-1}(E_2^2-E_4)+D f_1 \\
    &=\frac{k-p}{12}E_2(f_0E_2^{p}+f_1)+\left((\vartheta f_0)E_2^{p} + D f_1 -\frac{k-p}{12}E_2f_1-\frac{p}{12}E_2^{p-1}E_4f_0. \right) \\
    &=\frac{k-p}{12}E_2f+g.
\end{align*}
One can verify that $\vartheta f=g:=(\vartheta f_0)E_2^{p} + D f_1 -\frac{k-p}{12}E_2f_1-\frac{p}{12}E_2^{p-1}E_4f_0$ is a quasi-modular form of depth $p$.

Recall that $D f$ is always cuspidal by the definition of $D$. Hence
\begin{align*}
    \lim_{z \to i\infty} \vartheta f(z)&=\lim_{z \to i\infty}\left(D f(z)-\frac{k-p}{12}E_2(z)f(z)\right) 
    =-\frac{k-p}{12}\lim_{z \to i\infty}f(z),
\end{align*}
so $f$ is cuspidal if and only if $\vartheta f$ is cuspidal.
\end{proof}

Thus, if we write $DE_k=f_0+f_1 E_2$ for modular forms $f_0$ and $f_1$, then $f_0=\vartheta E_k$ and $f_1=\frac{k}{12}E_k$.

\section{proof of Theorem \ref{thm_near Eisen'}}\label{sec_thm 1}

\ww{

In this section, we present the proof of Theorem~\ref{thm_near Eisen'}. Consider the space $\widetilde M_{k}^{(\leq p)}$ quasimodular forms of weight $k$ and depth~$\leq p$. Let $\widetilde M_{k,\R}^{(\leq p)}$ be the set of quasimodular forms of weight $k$ and depth~$\leq p$ with real Fourier coefficients. Then there is a natural topology on $\widetilde M_{k,\R}^{(\leq p)}$ (resp. $\widetilde M_{k}^{(\leq p)}$) which makes $\widetilde M_{k,\R}^{(\leq p)}$ (resp. $\widetilde M_{k}^{(\leq p)}$) the canonical topological $\mathbb R$ (resp. $\mathbb C$)-vector space.

Now, let $C$ be a subset of $\widetilde M_{k+2,\R}^{(\leq 1)}$ given by}
\begin{align*}
C:=\{f \in \widetilde M_{k+2,\R}^{(\leq 1)} : \text{if $f(z)=0$ and $z \in \mathcal F$, then $\Re(z)=1/2$}\}.
\end{align*}
We equip $C$ with the subspace topology inherited from $\widetilde M_{k+2,\R}^{(\leq 1)}$. Since $DE_k \in C$, it suffices to show the following to prove Theorem \ref{thm_near Eisen'}.

\begin{theorem}\label{prop_Ek in int CM}
If $k \geq 4$ is an even integer, then $D E_k$ belongs to the interior $\inte(C)$ of $C$.
\end{theorem}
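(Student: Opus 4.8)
The plan is to show that $DE_k$ lies in the interior of $C$ by combining the valence formula for depth~$1$ quasimodular forms (Theorem~\ref{thm_valence formula}) with a continuity/compactness argument. First I would recall from \cite{GO22} that all zeros of $E_k'$ in $\mathcal F$ lie on $\delta_2=\{\Re(z)=1/2\}$, and by Proposition~\ref{prop_number of zeros of f_k,0} (applied to $E_k$ via the same argument, or directly by \cite[Theorem~2]{GO22}) there are exactly $N:=\left[\frac{k-4}{6}\right]+\delta_{k\equiv 2\,(6)}$ of them, counted with multiplicity and weight $e_z^{-1}$; moreover one checks these zeros are simple and lie in the \emph{open} part of the segment $\delta_2$ (i.e. away from $i$, $\rho$, and the cusp), so in particular $e_z=1$ for each. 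Write $DE_k = f_0 + f_1 E_2$ with $f_0=\vartheta E_k$, $f_1=\frac{k}{12}E_k$ as noted after Lemma~\ref{lem_thetafhasniceform-arbitrarydepth}; since $E_k$ and $\vartheta E_k$ have no common zeros (a common zero would, via $DE_k = \vartheta E_k + \frac{k}{12}E_k E_2$, force $E_k$ and $E_k'$ to share a zero, contradicting that all zeros of $E_k$ on the unit circle are simple by Rankin--Swinnerton-Dyer), the valence formula applies with $N_\infty(DE_k)=N$.

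Next I would set up the perturbation. Let $g = DE_k + h$ with $h\in \widetilde M_{k+2,\R}^{(\leq 1)}$ of small norm. Around each of the $N$ zeros $z_j=\frac12+it_j$ of $E_k'$ on $\delta_2$, the function $E_k'$ restricted to the line $\Re(z)=1/2$ is real-valued (because $E_k$ has real Fourier coefficients, $E_k'(\frac12+it)$ is real up to a fixed unit factor) and changes sign transversally at $z_j$; hence for $h$ small enough $g$ still has a zero on $\delta_2$ near each $z_j$, and these account for at least $N$ zeros on $\delta_2$ (with weight, using that they stay in the interior of the segment and stay simple). On the other hand, the valence formula gives that the total count of zeros of $g$ in $\mathcal F$, weighted by $e_z^{-1}$, equals $N_\infty(g)$. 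The crux is then to show $N_\infty(g)=N$ for $h$ small: $N_\infty$ is defined in \eqref{eqn_N1(f)} in terms of $\left[\frac{k}{6}\right]$, the sign $r(f_1)$, the parity $v_\rho(f_1)$, and the signs $\sgn(e^{\frac12 i(k+2)\varphi_j}g(e^{i\varphi_j}))$ at the zeros $e^{i\varphi_j}$ of the $E_2$-coefficient of $g$ on the arc $\pi/3\le\varphi\le\pi/2$. Since the $E_2$-coefficient of $DE_k$ is $\frac{k}{12}E_k$, whose zeros on that arc are simple (Rankin--Swinnerton-Dyer) and at which $e^{\frac12 i(k+2)\varphi}E_k'(e^{i\varphi})\neq 0$ (again no common zeros), all the ingredients of $N_\infty$ are \emph{stable} under small perturbation: the number of arc-zeros of the $E_2$-coefficient, their weights $w$, the signs at them, $r$ and $v_\rho$ all remain unchanged. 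Therefore $N_\infty(g)=N_\infty(DE_k)=N$ once $\|h\|$ is small enough.

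Combining: for small $\|h\|$, $g$ has at least $N$ zeros (with weight) on $\delta_2$ and exactly $N$ zeros (with weight) in $\mathcal F$ total; hence \emph{all} zeros of $g$ in $\mathcal F$ lie on $\delta_2$, i.e. $g\in C$. Since $h$ was an arbitrary small perturbation, $DE_k\in\inte(C)$. Finally, translating back from $\mathcal F$ to the standard domain $F$ (harmless since orders of zeros are invariant under $z\mapsto z+1$, cf. Remark~\ref{rmk_F and mathcal F}) and unwinding that $f = E_k' + \sum a_j g_j$ is exactly a small perturbation of $E_k'=DE_k$ when the $|a_j|$ are small, yields Theorem~\ref{thm_near Eisen'}.

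I expect the main obstacle to be making the ``at least $N$ zeros survive on $\delta_2$'' step fully rigorous while simultaneously controlling that no zeros escape to the boundary of $\mathcal F$ (the points $i$, $\rho$, the cusp, or the vertical sides) where the weights $e_z$ or the very membership in $\mathcal F$ could jump. This requires: (i) a uniform transversality bound for $E_k'$ along $\delta_2$ near each $z_j$, so that Rouch\'e-type reasoning on small real intervals produces a sign change of $g$ for all small $h$; (ii) confirming the $z_j$ are bounded away from $i$ and from the cusp (the latter because $DE_k$ is cuspidal, so its zeros on $\delta_2$ lie in a fixed compact sub-segment), and that $\rho\notin\delta_2$; and (iii) ruling out that a zero of $g$ approaches $\partial\mathcal F\setminus\delta_2$, which follows from the valence count being exactly $N$ together with $N$ zeros already located on $\delta_2$ — any extra or displaced zero would violate the total. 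A secondary technical point is verifying the no-common-zero hypothesis of Theorem~\ref{thm_valence formula} persists for the perturbed $g$ (its two modular components might a priori acquire a common zero); but one can either argue this holds on a dense open set of perturbations and pass to the limit, or observe that the valence formula statement can be applied after factoring out any common zero, which only shifts $N_\infty$ by a controlled amount that again stabilizes.

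\begin{proof}[Proof sketch, to be expanded]
The argument proceeds in the three stages outlined above: (1) establish $N_\infty(DE_k)=N=\left[\frac{k-4}{6}\right]+\delta_{k\equiv 2\,(6)}$ and that the $N$ zeros of $E_k'$ all lie simply in the interior of $\delta_2$; (2) show the defining data of $N_\infty$ in \eqref{eqn_N1(f)} are locally constant near $DE_k$, so $N_\infty(g)=N$ for all nearby $g$; (3) show transversality forces at least $N$ zeros of $g$ to persist on $\delta_2$, and conclude by the valence formula that these are all of them, hence $g\in C$.
\end{proof}
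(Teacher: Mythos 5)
Your overall strategy shares the paper's skeleton (stability of every ingredient of $N_{\infty}$ in \eqref{eqn_N1(f)} under small real perturbation, hence a locally constant total zero count, plus a persistence argument forcing $N$ zeros to remain on $\delta_2$), but your mechanism for the persistence step is genuinely different. The paper never restricts to the line: it invokes the analytic implicit function theorem to get local zero loci $\xi_j$ in disjoint neighborhoods $V_j$, and then argues by connectedness that the symmetry $z\mapsto 1-\overline{z}$ (valid because the coefficients are real) must pair each $\xi_a$ with a \emph{fixed} $\xi_b$ along any path back to $DE_k$; at $DE_k$ this forces $\xi_a=\xi_b$, so $a=b$ and every zero stays on $\Re(z)=1/2$. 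Your route instead uses that $g(\tfrac12+it)$ is real-valued with transversal sign changes at the $t_j$, so the intermediate value theorem plants a zero on $\delta_2$ in each of $N$ disjoint intervals, and the valence count then leaves no room for zeros elsewhere. For the generic case this is sound and arguably more elementary than the paper's path/connectedness argument.

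There is, however, a genuine gap in the case $k\equiv 2\pmod 6$, which the paper treats separately and your proposal gets wrong. You assert that the $N$ zeros of $E_k'$ are simple and lie in the open part of $\delta_2$, away from $\rho$, with $e_z=1$, and you justify the no-common-zero hypothesis by claiming all zeros of $E_k$ on the unit circle are simple. Neither holds when $k\equiv 2\pmod 6$: there $v_{\rho}(E_k)=2$ (Rankin--Swinnerton-Dyer locates the zeros on the arc but does not make the zero at $\rho$ simple), so $\rho=\tfrac12+\tfrac{\sqrt3}{2}i\in\delta_2$ is a zero of $E_k'$ sitting at an elliptic point with $e_{\rho}=3$, and moreover $\rho$ \emph{is} a common zero of $f_0=\vartheta E_k$ and $f_1=\tfrac{k}{12}E_k$, so the hypothesis of Theorem~\ref{thm_valence formula} fails for $DE_k$ itself, not merely for some perturbations. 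Your IVT argument also degenerates at $\rho$, which is an endpoint of the segment rather than an interior point, so a sign change there cannot be read off in the same way. The paper's fix is to observe that for $k\equiv2\pmod 6$ \emph{every} quasimodular form of weight $k+2$ and depth $\le 1$ vanishes at $\rho$ (by the classical valence formula applied to its two modular components), so that zero is rigid under perturbation and can be set aside before running the persistence argument on the remaining zeros, which do lie strictly above $\rho$ with $e_z=1$. Your proof needs this case split, together with the corresponding adjustment of the weighted count (the contribution $v_{\rho}/e_{\rho}$) and a version of the valence formula applicable despite the common zero at $\rho$.
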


We recall the following theorem due to Getz with suitably modified but equivalent arc boundary.

\begin{theorem}{\cite[Theorem~1]{Get04}}\label{thm_Getz}
    Let $k\geq 4$ be an even  integer. Then, there is an open neighborhood $U \subseteq M_{k,\mathbb R}$ of $E_k$ such that if $g \in U$ has $\ell=\ell(k)$ simple zeros on $\{e^{i\theta}: \pi/3<\theta < \pi/2\}$ and the following trivial zeros of $g$ depending on $k$ modulo $12$:
    \begin{align*}
        &v_i(g)=\begin{cases}
            1 & k \equiv 2 \pmod 4, \\
            0 & k \equiv 0 \pmod 4,
        \end{cases} \\
        &v_{\rho}(g)=\begin{cases}
            2 & k \equiv 2 \pmod 6, \\
            1 & k \equiv 4 \pmod 6, \\
            0 & k \equiv 0 \pmod 6.
        \end{cases}
    \end{align*}
\end{theorem}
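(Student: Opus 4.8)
I read Getz's theorem as the assertion that for every $g$ in a suitable open neighbourhood $U\subseteq M_{k,\R}$ of $E_k$, all zeros of $g$ in $F$ lie on the arc $\{e^{i\theta}:\pi/3\le\theta\le\pi/2\}$, with exactly $\ell=\ell(k)$ simple zeros on the open arc together with the stated multiplicities at the elliptic points $i$ and $\rho$. The plan is to run the Rankin--Swinnerton-Dyer argument for $E_k$ and then perturb. For $g\in M_{k,\R}$ and $\theta\in[\pi/3,\pi/2]$ put $g^*(\theta):=e^{ik\theta/2}g(e^{i\theta})$. First I would record that $g^*$ is real-valued: since $g$ has real Fourier coefficients, $\overline{g(e^{i\theta})}=g(-e^{-i\theta})=g(e^{i(\pi-\theta)})$, and since $g$ has weight $k$, $g(e^{i(\pi-\theta)})=g(-1/e^{i\theta})=e^{ik\theta}g(e^{i\theta})$; multiplying the two identities gives $\overline{g^*(\theta)}=g^*(\theta)$. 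Moreover $g\mapsto g^*$ is a continuous linear map from the finite-dimensional space $M_{k,\R}$ into $C([\pi/3,\pi/2])$ with the supremum norm.

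Next I would invoke the Rankin--Swinnerton-Dyer estimate. Writing $2E_k(z)=\sum_{\gcd(c,d)=1}(cz+d)^{-k}$ and splitting off the four terms with $(c,d)=(0,\pm1)$ and $(\pm1,0)$ gives
\begin{equation*}
    E_k^*(\theta)=2\cos\!\left(\tfrac{k\theta}{2}\right)+R_k(\theta),\qquad \sup_{\pi/3\le\theta\le\pi/2}\lvert R_k(\theta)\rvert\le 2-\delta
\end{equation*}
for some $\delta=\delta(k)>0$: the strict bound $\lvert R_k\rvert<2$ on the closed arc is exactly what \cite{RS70} proves (the finitely many small weights being checked by hand), and the uniform gap $\delta$ then follows by compactness. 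By continuity of $g\mapsto g^*$ there is an open neighbourhood $U\subseteq M_{k,\R}$ of $E_k$ with $\lVert g^*-E_k^*\rVert_\infty<\delta/2$ for all $g\in U$, whence $g^*(\theta)=2\cos(k\theta/2)+\widetilde R(\theta)$ with $\lvert\widetilde R(\theta)\rvert<2$ throughout $[\pi/3,\pi/2]$.

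Then I would harvest the zeros. At the points $\theta_m=2\pi m/k$ with $m\in\Z$ lying in, or at an endpoint of, $[\pi/3,\pi/2]$ one has $2\cos(k\theta_m/2)=2(-1)^m$, so $\lvert\widetilde R\rvert<2$ forces $\sgn\bigl(g^*(\theta_m)\bigr)=(-1)^m$, and hence $g^*$ changes sign on each interval between consecutive such $\theta_m$. A short computation in each residue class $k'=k\bmod 12$ shows that there are exactly $\ell+1$ such indices and that every one of the $\ell$ resulting sign-change intervals lies strictly inside the open arc $\{e^{i\theta}:\pi/3<\theta<\pi/2\}$ --- an endpoint $\theta=\pi/3$, resp.\ $\theta=\pi/2$, occurs among the $\theta_m$ precisely when $\rho$, resp.\ $i$, is not a forced zero of $E_k$. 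Thus $g$ has $\ell$ distinct zeros on the open arc. Separately, the automorphy relation at $i$ (resp.\ $\rho$) forces the order of vanishing of any weight-$k$ form there into a fixed residue class modulo $2$ (resp.\ modulo $3$) depending on $k\bmod4$ (resp.\ $k\bmod6$); together with Hurwitz's theorem --- which makes the total zero multiplicity of $g$ in a small disc about $i$, resp.\ $\rho$, equal to that of $E_k$ once $g$ is close enough to $E_k$ --- this pins $v_i(g)$ and $v_\rho(g)$ to the exact values in the statement. Finally the classical valence formula $\sum_{z\in F}v_z(g)/e_z=k/12$ is already saturated by the $\ell$ sign-change zeros and the trivial zeros at $i$ and $\rho$, so $g$ has no further zeros in $F$ and the $\ell$ zeros on the open arc are simple.

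The hard part will be the Rankin--Swinnerton-Dyer estimate with a uniform positive gap, and in particular its behaviour near $\theta=\pi/3$: there the term $(ce^{i\theta}+d)^{-k}$ with $(c,d)=(1,-1)$ has modulus approaching $1$ while the main term $2\cos(k\theta/2)$ may itself be small, so one must check carefully that $\lvert R_k\rvert$ stays strictly below $2$ and that the mod-$4$ and mod-$6$ bookkeeping really does keep every genuine sign change inside the \emph{open} arc. By contrast the perturbation step is soft --- nothing more than continuity of evaluation on a finite-dimensional space --- so essentially the entire weight of the proof lies in importing, and slightly sharpening, the classical estimate for $E_k$.
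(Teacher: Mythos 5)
The paper does not prove this statement at all: it is quoted verbatim from \cite[Theorem~1]{Get04} (and, as transcribed here, the sentence is even grammatically truncated --- the ``if $g\in U$ has\dots'' clause has no conclusion; the intended assertion, which you correctly supplied, is that \emph{every} $g\in U$ has exactly these zeros and no others in $F$). So there is no in-paper proof to compare against; the right benchmark is Getz's own argument, and your proposal is essentially a faithful reconstruction of it: the Rankin--Swinnerton-Dyer decomposition $E_k^*(\theta)=2\cos(k\theta/2)+R_k(\theta)$ with $\sup|R_k|<2$, a compactness gap $\delta$, continuity of $g\mapsto g^*$ on the finite-dimensional space $M_{k,\R}$, sign changes at $\theta_m=2\pi m/k$, the automorphy-forced congruences on $v_i$ and $v_\rho$ combined with Hurwitz, and saturation of the valence formula to rule out further zeros and force simplicity. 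All of these steps check out: the reality of $g^*$ is exactly as you derive it, the endpoint bookkeeping ($\theta=\pi/3$ is a node iff $6\mid k$ iff $v_\rho(E_k)=0$, and likewise for $\pi/2$ and $4\mid k$) is consistent, and the multiplicity count $\ell+\tfrac{1}{2}v_i+\tfrac{1}{3}v_\rho=k/12$ closes the argument in every residue class. The only point you flag as ``hard'' --- the uniform strict bound $|R_k|<2$ near $\theta=\pi/3$ --- is precisely the content of \cite{RS70} for $k\ge 12$ (and for $k\in\{4,6,8,10,14\}$ one has $\ell=0$, so the arc estimate is not even needed and the Hurwitz-plus-valence part alone suffices), so importing it as a black box is legitimate. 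In short: correct, and the same route as the cited source rather than a new one.
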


Moreover, these zeros behave analytically when we move the modular forms $g$ smoothly in the following sense.

\begin{lemma}\label{lem_E_k nbhd zero simple}
    Let $k\geq 4$ be an even integer. Let $U \subseteq M_{k,\mathbb R}$ be an open neighborhood defined in Theorem~\ref{thm_Getz} and $\varphi_1(g),\ldots,\varphi_{\ell}(g)$ denotes the angle associated with the $\ell$ simple zeros of $g\in U$ on $\{e^{i\theta}: \pi/3 <\theta < \pi/2\}$. Equip the smooth structure on $M_{k,\mathbb R}$ using the isomorphism $M_{k,\mathbb R} \simeq \mathbb R^{\ell+1}$ of $\mathbb R$-vector spaces. Then $\varphi_i(g)$ for $1\leq i \leq \ell$ are locally smooth functions.
\end{lemma}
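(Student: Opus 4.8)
The plan is to prove Lemma~\ref{lem_E_k nbhd zero simple} by a direct application of the implicit function theorem, exploiting that the $\ell$ zeros on the arc are assumed simple. First I would set up coordinates: fix $g_0 \in U$ (with $g_0 = E_k$ as the prototype) and write a nearby $g \in U$ via the linear isomorphism $M_{k,\R} \simeq \R^{\ell+1}$, so $g = g(\,\cdot\,;\mathbf{a})$ depends smoothly (indeed real-analytically, being linear) on the parameter $\mathbf{a} \in \R^{\ell+1}$. For a modular form with real Fourier coefficients, the function $\theta \mapsto e^{ik\theta/2} g(e^{i\theta})$ is real-valued on $(\pi/3,\pi/2)$ — this is the standard Rankin--Swinnerton-Dyer observation, and it is implicit in the statement of Theorem~\ref{thm_Getz} that the arc zeros are exactly the sign changes of this real function. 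So I would define
\begin{align*}
F(\theta,\mathbf{a}) := e^{ik\theta/2}\, g(e^{i\theta};\mathbf{a}),
\end{align*}
a smooth (real-analytic) real-valued function of $(\theta,\mathbf{a}) \in (\pi/3,\pi/2)\times \R^{\ell+1}$, whose zeros in the $\theta$-variable are precisely the angles $\varphi_i(g)$.

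Next, the key point: at $\mathbf{a}_0$ corresponding to $g_0$, each zero $\theta = \varphi_i(g_0)$ is \emph{simple}, meaning $v_{e^{i\varphi_i}}(g_0) = 1$, which translates into $\partial F/\partial \theta \,(\varphi_i(g_0),\mathbf{a}_0) \neq 0$. (One should check that simplicity of the zero of $g$ as a holomorphic function at $e^{i\varphi_i}$ is equivalent to the nonvanishing of this partial derivative of the real restriction — the chain rule gives $\partial_\theta F = i e^{i\theta}\big(g'(e^{i\theta}) + \tfrac{k}{2}\,\tfrac{g(e^{i\theta})}{e^{i\theta}}\big)e^{ik\theta/2}$, which at a zero of $g$ reduces to $i e^{i\theta} g'(e^{i\theta}) e^{ik\theta/2}$, nonzero iff $g'(e^{i\varphi_i}) \neq 0$.) The implicit function theorem then yields, for each $i$, an open neighborhood of $\mathbf{a}_0$ and a smooth function $\mathbf{a} \mapsto \varphi_i(\mathbf{a})$ with $F(\varphi_i(\mathbf{a}),\mathbf{a}) = 0$ and $\varphi_i(\mathbf{a}_0) = \varphi_i(g_0)$. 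Shrinking $U$ if necessary (it is already an open neighborhood from Getz's theorem, and we may intersect with finitely many IFT neighborhoods), these local solutions are defined on a common neighborhood; since $g$ has exactly $\ell$ simple zeros on the arc throughout $U$ and the $\varphi_i$ vary continuously, the functions produced by the IFT must coincide with the enumeration $\varphi_1(g) \le \cdots \le \varphi_\ell(g)$ (they stay distinct and ordered on a small enough neighborhood), so each $\varphi_i$ is the claimed smooth function.

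I would close by noting the two small verifications that make the argument rigorous: (1) that $F$ is genuinely real-valued — this follows because $\overline{g(e^{i\theta})} = g(-\overline{e^{i\theta}}) \cdot (\text{unit})$ for real-coefficient $g$, or more directly from the functional equation under $z \mapsto -1/z$ combined with $|z|=1$, giving $\overline{e^{ik\theta/2}g(e^{i\theta})} = e^{ik\theta/2}g(e^{i\theta})$; and (2) that the IFT neighborhoods can be taken uniform in $i$, which is immediate since there are only finitely many ($\ell$) zeros.

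The main obstacle I anticipate is not any deep difficulty but rather the bookkeeping in point~(2) together with ensuring the globally-defined enumeration $\varphi_i(g)$ on all of $U$ matches the locally-defined IFT branches: a priori two distinct arc-zeros could collide as $g$ varies within $U$, which would break smoothness of the individual $\varphi_i$. This is ruled out precisely because Getz's theorem guarantees $\ell$ \emph{simple} (hence distinct) zeros for every $g \in U$, so no collision occurs and the branches stay separated; making this last step airtight — possibly by shrinking $U$ to a connected neighborhood on which all $\ell$ IFT branches are defined and pairwise disjoint — is the only place requiring care.
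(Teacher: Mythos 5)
Your proposal is correct, and it rests on the same engine as the paper's proof --- the implicit function theorem applied at the simple zeros guaranteed by Theorem~\ref{thm_Getz} --- but the realization is genuinely different. The paper applies the \emph{holomorphic} implicit function theorem to $\tilde g(z,c_1,\ldots,c_d)=g(c_1,\ldots,c_d)(z)$ in the complex variable $z$ over complex parameters, obtains analytic zero loci $\xi_j:U\to\Ha$, restricts to real parameters, and then must invoke Getz's theorem a second time to conclude that the loci $\xi_j(c_1,\ldots,c_d)$ actually remain on the arc $\{e^{i\theta}:\pi/3<\theta<\pi/2\}$, so that $e^{i\varphi_j(g)}=\xi_j(c_1,\ldots,c_d)$ and $\varphi_j$ inherits smoothness. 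You instead parametrize the arc from the outset and apply the one-real-variable implicit function theorem to $F(\theta,\mathbf a)=e^{ik\theta/2}g(e^{i\theta};\mathbf a)$, which requires the Rankin--Swinnerton-Dyer observation that $F$ is real-valued for real-coefficient forms (a fact the paper's proof never needs) together with your correct verification that simplicity of the zero of $g$ at $e^{i\varphi_i}$ is equivalent to $\partial F/\partial\theta\neq 0$ there. What each approach buys: the paper's version produces the zero loci as points of $\Ha$ without presupposing they lie on the circle (which is exactly the form reused in the proof of Theorem~\ref{prop_Ek in int CM}, where zeros live on $\delta_2$ rather than the arc), while yours is more self-contained on the arc and makes the reality/ordering of the angles transparent, at the cost of an extra lemma about $F$ being real-valued. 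Both arguments still need Getz's count of exactly $\ell$ simple arc zeros to match the IFT branches with the global enumeration $\varphi_1(g)\leq\cdots\leq\varphi_\ell(g)$, and you handle that matching carefully; no gap.
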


\begin{proof}
   Let $g_1, \ldots, g_d \in M_{k}$ form a basis for $M_{k}$. Define a linear map $g : \C^d \to M_{k}$ by $g(c_1,\ldots,c_d)=c_1g_1+\cdots+c_d g_d$ that is an isomorphism of topological $\C$-vector spaces, and also define $\tilde{g}:\Ha \times \C^d \to \C$ by $\tilde g(z,c_1,\ldots,c_d)=g(c_1,\ldots,c_d)(z)$. Let $g(c_{0,1},\ldots,c_{0,d})=E_k$.

Consider a zero $(z_0,c_{0,1},\ldots,c_{0,d}) \in \Ha \times \C^d$ of $\tilde g$. Since all the zeros of $E_k$ are simple, $z_0$ is a simple zero of $g(c_{0,1},\ldots,c_{0,d})$. Thus we have 
\begin{align*}
    \det\left(\frac{\partial \tilde g}{\partial z}(z_0,c_{0,1},\ldots,c_{0,d})\right)=\frac{\partial \tilde g}{\partial z}(z_0,c_{0,1},\ldots,c_{0,d})\neq 0,
\end{align*}
so by the analytic implicit function theorem, there exist open neighborhoods $U$ of $(c_{0,1},\ldots,c_{0,d}) \in \C^d$ and $V$ of $z_0 \in \Ha$, and an analytic function $\xi : U \to V$ such that
\begin{align*}
\tilde g(\xi(c_1,\ldots,c_d), c_1,\ldots,c_d)&=0, \quad \text{if } (c_1,\ldots,c_d) \in U, \\
\xi(c_{0,1},\ldots,c_{0,d})&=z_0.
\end{align*}
This means that $\xi$ provides the local zero locus of $g$ near $z_0$. In particular, for simple zeros $z_1=e^{i\varphi_1},\ldots,z_{\ell}=e^{i\varphi_{\ell}}$ for $\pi/3<\varphi_1,\ldots,\varphi_{\ell}<\pi/2$, we have analytic loci $\xi_j : U \to V_j \subset \mathbb H$ with $\xi_j(c_{0,1},\ldots,c_{0,d})=z_j$, $1\leq j \leq \ell$. 

Identifying $\R^d$ with a subset of $\left(\R+i\cdot 0\right)^d$ in $\C^d$, note that $\xi|_{U \cap \R^d}$ is a smooth function. According to Theorem~\ref{thm_Getz}, we may assume by shrinking $U$ that these $\xi_i(c_1,\ldots,c_d)$ lie on $\{e^{i\theta}: \pi/3<\theta<\pi/2\}$ and provide all the zeros of $g(c_1,\ldots,c_d)$ except for trivial zeros. Therefore, we have $e^{i\varphi_j(g(c_1,\ldots,c_d))}=\xi_j(c_1,\ldots,c_d)$ (after change the order of indices) and hence $\varphi_j(g)$ are smooth for~$g\in U \cap \mathbb R^d$.
\end{proof}

\begin{proof}[Proof of Theorem~\ref{prop_Ek in int CM}]
Let $f_1, \ldots, f_d \in \widetilde M_{k+2,\R}^{(\leq 1)}$ form a basis for $\widetilde M_{k+2}^{(\leq 1)}$. Define a linear map $f : \C^d \to \widetilde M_{k+2}^{(\leq 1)}$ by $f(c_1,\ldots,c_d)=c_1f_1+\cdots+c_d f_d$ that is an isomorphism of topological $\C$-vector spaces, and also define $\tilde{f}:\Ha \times \C^d \to \C$ by $\tilde f(z,c_1,\ldots,c_d)=f(c_1,\ldots,c_d)(z)$.

Consider a zero $(z_0,c_{0,1},\ldots,c_{0,d}) \in \Ha \times \C^d$ of $\tilde f$ and suppose that $z_0$ is a simple zero of $f(c_{0,1},\ldots,c_{0,d})$. Then, 
\begin{align*}
    \det\left(\frac{\partial \tilde f}{\partial z}(z_0,c_{0,1},\ldots,c_{0,d})\right)=\frac{\partial \tilde f}{\partial z}(z_0,c_{0,1},\ldots,c_{0,d})\neq 0,
\end{align*}
so by the analytic implicit function theorem, there exist open neighborhoods $U$ of $(c_{0,1},\ldots,c_{0,d}) \in \C^d$ and $V$ of $z_0 \in \Ha$, \ww{and an analytic function $\xi : U \to V$ such that
\begin{align*}
\tilde f\left(\xi(c_1,\ldots,c_d), c_1,\ldots,c_d\right)&=0 \text{ if } (c_1,\ldots,c_d) \in U \quad \text{ and } \quad 
\xi(c_{0,1},\ldots,c_{0,d})=z_0.
\end{align*}}
This means that $\xi$ provides the local zero locus of $f$ near $z_0$. Identifying $\R^d$ with a subset of $\left(\R+i\cdot 0\right)^d$ in $\C^d$, the function $\xi|_{U \cap \mathbb R^d}$ is smooth.

Take $c_{0,1},\ldots,c_{0,d} \in \R$ so that $f(c_{0,1},\ldots,c_{0,d})=D E_k$. First, suppose $k \not\equiv 2 \pmod{6}$. There are $m:=\left[\frac{k-4}{6}\right]$ distinct zeros $z_1,\ldots,z_m$ in $\mathcal F$ which all lie on $\{z \in \mathcal F : \Re(z)=1/2\}$. These zeros are all simple (\cite[Theorem 4]{GO22}). As we have seen above, one can take open sets $U_j \subseteq \C^d$, $V_j \subseteq \Ha$, and analytic functions $\xi_j: U_j \to V_j$ for each $z_j$. By shrinking $U_j$ and $V_j$, we may assume the following conditions: let $U:=\cap_{j=1}^m U_j$.
\begin{enumerate}
    \item[\normalfont(i)] $V_i$'s are pairwise disjoint,
    \item[\normalfont(ii)] for any $(c_1,\ldots,c_d) \in U$, the zeros of $f(c_1,\ldots,c_d)$ have the absolute value larger than $1$,    
    \item[\normalfont(iii)] for any $(c_1,\ldots,c_d) \in U \cap \R^d$, if we write $$f(c_1,\ldots,c_d)=f_0(c_1,\ldots,c_d)+f_1(c_1,\ldots,c_d)E_2 \in M_{k+2,\mathbb R} \oplus M_{k,\mathbb R}E_2,$$ then $f_1(c_1,\ldots,c_d)(z)$ has the same number of zeros (counted with multiplicity) on $\{z \in \mathcal F: |z|=1 \text{ and } 0 \leq \Re(z) \leq 1/2\}$, say $n$, which is the number of zeros of $f_1(c_{0,1},\ldots,c_{0,d})=~\frac{k}{12}E_k$, and 
    \begin{align*}
        w(e^{i\varphi_j(c_1,\ldots,c_d)})&=w(e^{i\varphi_j(c_{0,1},\ldots,c_{0,d})}), \\
        v_{\rho}(f_1(c_1,\ldots,c_d))&=v_{\rho}\left(\frac{k}{12}E_k\right), \\
        r(f_1(c_1,\ldots,c_d))&=r\left(\frac{k}{12}E_k\right), \\ \sgn\left(e^{\frac{1}{2}i(k+2)\varphi_j(c_1,\ldots,c_d)}f(c_1,\ldots,c_d)(e^{i\varphi_j(c_1,\ldots,c_d)})\right)&=\sgn\left(e^{\frac{1}{2}i(k+2)\varphi_j(c_{0,1},\ldots,c_{0,d})}\frac{k}{12}E_k(e^{i\varphi_j(c_{0,1},\ldots,c_{0,d})})\right),
    \end{align*}
     where $\varphi_j,w,v_{\rho},r$ are defined in Section~\ref{sec_valences}.
\end{enumerate}
Let us elaborate on why condition (iii) can be satisfied. The consistency of the number of zeros (counted with multiplicity) of $f_1(c_1,\ldots,c_d)$ follows from Theorem~\ref{thm_Getz}; we may shrink $U$ so that $f(U)\cap M_{k,\mathbb R}E_2$ is an open subset of $M_{k,\mathbb R}E_2 \simeq M_{k,\mathbb R}$ satisfying Theorem~\ref{thm_Getz}. The first and second equations
\begin{align*}
    w(e^{i\varphi_j(c_1,\ldots,c_d)})&=w(e^{i\varphi_j(c_{0,1},\ldots,c_{0,d})}), \\
    v_{\rho}(f_1(c_1,\ldots,c_d))&=v_{\rho}\left(\frac{k}{12}E_k\right)
\end{align*}
are met according to Theorem~\ref{thm_Getz} again. More precisely, when $(c_1,\ldots,c_d)$ varies within sufficiently small open set, the values of $v_{\rho}(f_1(c_1,\ldots,c_d))$ and $v_{i}(f_1(c_1,\ldots,c_d))$ are determined by the weight of $f_1(c_1,\ldots,c_d)$. Since $w(z)= 2$ for $z\in \mathcal F$ if and only if $z=i,\rho,\infty$, we have desired equations. The third equation
\begin{align*}
    r(f_1(c_1,\ldots,c_d))=r\left(\frac{k}{12}E_k\right)    
\end{align*}
is a consequence of the continuity of the first nonzero coefficient of the Taylor expansion of $f_1(c_1,\ldots,c_d)$ around $\rho$. Since $r(f_1(c_1,\ldots,c_d))$ represents the sign of this nonzero coefficient, it remains constant as $(c_1,\ldots,c_d)$ varies within a sufficiently small open set. The last equation stems from the fact that $\varphi_j(c_1,\ldots,c_d)$ are locally smooth according to Lemma~\ref{lem_E_k nbhd zero simple}. To avoid confusion, we note that we can assume 
\begin{align*}
    e^{\frac{1}{2}i(k+2)\varphi_j(c_1,\ldots,c_d)}f(c_1,\ldots,c_d)(e^{i\varphi_j(c_1,\ldots,c_d)})
\end{align*}
is nonzero; if it were to vanish, then $e^{i\varphi_j(c_1,\ldots,c_d)}$ would be a common zero of $f_0$ and $f_1$. As $f(c_{0,1},\ldots,c_{0,d})=DE_k$, we can avoid this situation by the local continuity of the zeros of $f_0(c_1,\ldots,c_d)$ and $f_1(c_1,\ldots,c_d)$ as per Lemma~\ref{lem_E_k nbhd zero simple} together with the fact that $f_0(c_{0,1},\ldots,c_{0,d})$ and $f_1(c_{0,1},\ldots,c_{0,d})$ do not have the common zero. Therefore, the sign of $e^{\frac{1}{2}i(k+2)\varphi_j(c_1,\ldots,c_d)}f(c_1,\ldots,c_d)(e^{i\varphi_j(c_1,\ldots,c_d)})$ remains constant as $(c_1,\ldots,c_d)$ varies within a sufficiently small open set. 

Since the set
\begin{align*}
\left\{(c_1,\ldots,c_d) \in \C^d : \frac{\partial \tilde f}{\partial z}\left(\xi_i(c_1,\ldots,c_d),c_1,\ldots,c_d\right)=0\right\}
\end{align*}
is a closed measure-zero set, we may further assume that $\xi_i(c_1,\ldots,c_d)$ is a simple zero of $f(c_1,\ldots,c_d)$ for all $(c_1,\ldots,c_d) \in U$. We assumed each ingredient of the right-hand side of \eqref{eqn_N1(f)} is constant for $f=f(c_1,\ldots,c_d)$ when $(c_1,\ldots,c_d) \in U \cap \mathbb R^d$, so by Theorem~\ref{thm_valence formula} and the fact that $e_z>1$ if $|z|>1$, we obtain the following property:
\begin{enumerate}[(P1)]
    \item If $(c_1,\ldots,c_d) \in U \cap \R^d$, then the number of zeros (counted with multiplicity) of $f(c_1,\ldots,c_d)$ in $\mathcal F$ is $m$.  
\end{enumerate}

Let $U_0$ be the connected component of $U \cap \R^d$ containing $D E_k$, which is also open in $\R^d$ due to the local connectedness of $\R^d$. Clearly, (P1) holds if $(c_1,\ldots,c_d) \in U_0$. Meanwhile, as we have taken above, there are $m$ simple zero loci $\xi_1(c_1,\ldots,c_d),\ldots,\xi_m(c_1,\ldots,c_d)$ for $(c_1,\ldots,c_d) \in U$. Combining this fact with (P1), we have the following conclusion:
\begin{enumerate}[(P2)]
    \item If $(c_1,\ldots,c_d) \in U_0$, then $\xi_1(c_1,\ldots,c_d),\ldots,\xi_m(c_1,\ldots,c_d)$ provide all $m$ simple zeros of $f(c_1,\ldots,c_d)$ in $\mathcal F$.
\end{enumerate}

We claim that $U_0 \subseteq C$ so that $D E_k \in U_0 \subseteq \inte(C)$, which completes the proof.

\ww{Suppose for contradiction that there exists a point $(c_{1,1},\ldots,c_{1,d}) \in U_0$ such that for some $z_0 \in \mathcal F \setminus \{z \in \mathcal F : \Re(z)=1/2\}$ one has $\tilde f(z_0,c_{1,1},\ldots,c_{1,d})=0$. By (P2), $z_0=\xi_a(c_{1,1},\ldots,c_{1,d})$ for some $1\leq a \leq m$. Note that since $c_{1,1},\ldots,c_{1,d} \in \mathbb R$, $1-\overline{z_0}$ is also a zero of $f(c_{1,1},\ldots,c_{1,d})$ so that $1-\overline{z_0}=\xi_b(c_{1,1},\ldots,c_{1,d})$ for some $b \in \{1,\ldots,m\}$. Since $\Re(z_0) \neq 1/2$, we have $a \neq b$.

Let $p : [0,1] \to U_0$ be a path with $p(0)=(c_{0,1},\ldots,c_{0,d})$ and $p(1)=(c_{1,1},\ldots,c_{1,d})$. For any $t\in[0,1]$, $\xi_a(p(t))$ is a zero of $f(p(t))$, implying that $1-\overline{\xi_a}(p(t))$ is also a zero of $f(p(t))$. Thus, $1-\overline{\xi_a}(p(t))=\xi_{b'}(p(t))$ for some $1\leq b' \leq m$. Define $I_{b'}$ as the set consisting of $t\in [0,1]$ for which $1-\overline{\xi_a}(p(t))=\xi_{b'}(p(t))$, and let $b_1,\ldots,b_r \in \{1,2,\ldots,m\}$ be the set of integers such that $I_{b_i}\neq \emptyset$. We then have
\begin{align*}
    S:=1-\overline{\xi_a}(p([0,1]))=\bigcup_{i=1}^{r} \xi_{b_i}(p(I_{b_i})).
\end{align*}
Consequently,
\begin{align*}
    S \subset V_{b_1} \sqcup \ldots \sqcup V_{b_r},
\end{align*}
or equivalently,
\begin{align*}
    S=(V_{b_1} \cap S) \sqcup \ldots \sqcup (V_{b_r} \cap S).
\end{align*}
Given that $\xi_a$ and $p$ are continuous functions and $[0,1]$ is connected, the set $S$ is also connected, so $r=1$. Since $\xi_b(p(1))=1-\overline{\xi_a}(p(1))$, we have $I_b \neq \emptyset$, leading to $b_1=b$. 

Therefore, we have $1-\overline{\xi_a}(p(t))=\xi_b(p(t))$ for any $t\in[0,1]$, particularly $1-\overline{\xi_a}(p(0))=\xi_b(p(0))$. Recall that $\Re(\xi_a(p(0)))=1/2$ as $p(0)=(c_{0,1},\ldots,c_{0,d}) \in C$. Hence, we obtain
\begin{align*}
    \xi_a(p(0))=1-\overline{\xi_a}(p(0))=\xi_b(p(0)),
\end{align*}
leading to a contradiction.}

Now suppose that $k \equiv 2 \pmod{6}$. In this case, there is an additional simple zero $z_{m+1}$ of $D E_k$ on $\delta_2$, namely $z_{m+1}=\rho$, which is a double zero of $E_k$. We can use the argument as before to show that there is an analytic function $\xi_{m+1} : U_{m+1} \to V_{m+1}$ with $\xi_{m+1}(c_{0,1},\ldots,c_{0,d})=\rho$. Note that a quasimodular form of weight $k+2$ and depth $1$ always vanishes at $\rho$ by the classical valence formula. By shrinking $V_{m+1}$, the only zero of $f(c_1,\ldots,c_d)$ in $V_{m+1}$ is $\xi_{m+1}(c_1,\ldots,c_d)$, which implies that $\xi_{m+1}$ is a constant function. Therefore, we can use the same argument for the other zeros (which have the absolute value larger than $1$) as before to show that $U_0 \subseteq C$.
\end{proof}

We remark that the same argument as above proves the following.

\begin{proposition}
Let $f=f_0+f_1 E_2$ be a quasimodular form of weight $k$, where $f_0$ and $f_1$ are modular forms such that $f_1(z) \neq 0$ for all $z \in F$ with $|  z|  =1$. If $f$ is on the topological boundary of~$C$, then there is a multiple zero of $f$ in $F$.
\end{proposition}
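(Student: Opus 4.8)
\medskip
\noindent\emph{Proof proposal.}
The plan is to prove the contrapositive: assuming every zero of $f$ in $F$ (equivalently in $\mathcal F$, by Remark~\ref{rmk_F and mathcal F}) is simple, I will show $f\notin\partial C$ by showing that $f$ lies in the interior of $C$ or in the interior of its complement. The argument follows the proof of Theorem~\ref{prop_Ek in int CM}, with the hypothesis on $f_1$ taking over the role of Getz's theorem there. One may assume $f_0$ and $f_1$ have no common zero — otherwise $f=g\tilde f$ with $g=\gcd(f_0,f_1)$ nonconstant, and a short case analysis on $g$ shows that either $f$ already has a multiple zero in $F$, or $g$ has a zero off $\delta_2$ and then no small perturbation can move all zeros of $f$ onto $\delta_2$, so $f\notin\overline C$ — and then Theorem~\ref{thm_valence formula} applies. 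Two facts drop out of the hypothesis. First, $f_1$ having no zero on the compact arc $\{|z|=1\}\cap\overline{\mathcal F}$ makes the arc-sum in \eqref{eqn_N1(f)} empty, so $N_\infty(f)=\tfrac12[k/6]$. Second, $f_1(i)\neq 0$ forces the weight $k-2$ of $f_1$ to be $\equiv0\pmod 4$, hence $f_0(i)=0$ and $f(i)=f_1(i)E_2(i)\neq0$; then $v_i(f)=0$ together with Theorem~\ref{thm_valence formula} and simplicity of zeros forces $v_\rho(f)=0$ as well. Thus every zero of $f$ in $\mathcal F$ sits at a point with $e_z=1$, and no elliptic-point bookkeeping is needed.

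\medskip
Next I would identify $\widetilde M_{k,\R}^{(\leq 1)}\simeq\R^d$ via a basis, let $c_0\leftrightarrow f$, and apply the analytic implicit function theorem to each of the $m$ (simple) zeros $z_1,\dots,z_m$ of $f$ in $\mathcal F$, producing analytic loci $\xi_j$ on a common real neighbourhood $U$ of $c_0$ with pairwise disjoint targets and with each $\xi_j(c)$ again a simple zero of $f(c)$. For $c\in U$ near $c_0$ one still has $f_1(c)\neq 0$ on the arc, so Theorem~\ref{thm_valence formula} gives $\sum_{z\in\mathcal F}v_z(f(c))/e_z=\tfrac12[k/6]=m$, a constant. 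If $f\in C$, so $\Re(z_j)=1/2$ for all $j$, then since each $z_j$ has $|z_j|>1$ (as $f(\rho)\neq0$) the loci $\xi_j(c)$ stay in $\mathcal F$ with modulus $>1$, hence account for the whole count $m$, so they are \emph{all} the zeros of $f(c)$ in $\mathcal F$; as $f(c)$ has real Fourier coefficients and period $1$, $1-\overline{\xi_j(c)}$ is again a zero near $z_j$, so by uniqueness of the zero in the $j$-th target set $\xi_j(c)=1-\overline{\xi_j(c)}$, i.e.\ $\Re(\xi_j(c))=1/2$; hence a whole neighbourhood of $f$ lies in $C$. If instead $f\notin C$, I would pick a simple zero $z_0\in\mathcal F$ with $\Re(z_0)\neq1/2$ (so $z_0\neq i,\rho$) and track $\xi_0$: when $z_0\in\inte(\mathcal F)$, or $z_0\in\delta_1$ (where the $z\mapsto-\bar z$ symmetry together with $|z_0|>1$ keeps $\xi_0(c)$ on the imaginary axis with modulus $>1$), the locus $\xi_0(c)$ stays in $\mathcal F$ with real part $\neq1/2$; when $z_0$ lies on the open arc, $z_0$ is paired by $z\mapsto1-\bar z$ with a zero of $f$ just outside $\mathcal F$ on the opposite arc, and balancing these against the fixed count $m$ shows that for $c$ near $c_0$ some zero of $f(c)$ with real part $\neq1/2$ must remain in $\mathcal F$. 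Either way $f$ lies in the interior of $C$ or of its complement, so $f\notin\partial C$.

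\medskip
I expect the genuine obstacle to be this last point: controlling the loci $\xi_j(c)$ as they reach the topological boundary of $\mathcal F$, and ruling out that a zero leaks out of $\mathcal F$ through the arc $\{|z|=1\}$ — or that a ``rogue'' zero of $f$ sitting just outside $\mathcal F$ leaks in — since such behaviour is exactly what could produce a nearby form in, respectively out of, $C$. The device for handling it is the local constancy of $N_\infty$ from Theorem~\ref{thm_valence formula}, which the hypothesis $f_1\neq 0$ on the arc guarantees by annihilating the arc-term in \eqref{eqn_N1(f)}; one then matches up zeros on the included and excluded parts of $\partial\mathcal F$ via the symmetries $z\mapsto-\bar z$ and $z\mapsto z+1$ and checks that the fixed total count cannot be redistributed so as to make $f(c)$ lie in $C$.
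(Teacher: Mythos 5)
Your proposal is correct and takes essentially the same route as the paper, whose entire proof consists of the remark that the dichotomy $f\in C$ or $f\notin C$, combined with the analytic implicit function theorem and the argument of Theorem~\ref{prop_Ek in int CM}, places $f$ in the interior of $C$ or of its complement; your key observation that the hypothesis on $f_1$ empties the arc-sum in \eqref{eqn_N1(f)}, making $N_\infty$ locally constant and thereby substituting for Getz's theorem, is exactly the intended mechanism. The additional details you supply (the congruence forcing $v_i(f)=v_\rho(f)=0$, the boundary-arc bookkeeping) go beyond what the paper records but are consistent with it, the only small slip being that your gcd reduction's case analysis omits the possibility that $g$ is nonconstant with all zeros simple and on $\delta_2$ --- a case the main perturbation argument handles directly anyway, since the zero count remains locally constant there too.
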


\begin{proof}
Note that either $f\in C$ or $f \in \widetilde M_{k,\R}^{(\leq 1)} \setminus C$. Assume that all the zeros of $f$ in $F$ are simple. By the analytic implicit function theorem and the same argument as in the proof of Theorem~\ref{prop_Ek in int CM}, if $f \in C$ (resp. $f \in \widetilde M_{k,\R}^{(\leq 1)}\setminus C$) then $f \in \mathrm{int}\left(C\right)$ (resp. $f \in \mathrm{cl}\left(\widetilde M_{k,\R}^{(\leq 1)}\setminus C\right)$) which implies that $f \not\in \mathrm{cl}\left(\widetilde M_{k,\R}^{(\leq 1)}\setminus C\right)$ 
 (resp. $f \not\in \mathrm{cl}\left(C\right)$).
\end{proof}

\section{Proof of Theorem \ref{thm_gap ftn}}\label{sec_thm 2}

Our main goal in this section is to prove Theorem~\ref{thm_gap ftn}. We investigate the value of $Df_{k,m}(z)$ at $z=\frac{1}{2}+\frac{i}{2}\cot \theta=ie^{-i\theta}|z|  $ for $\theta \in \left[\frac{(\ell+1)\pi}{k+1},\frac{([k/6]-1)\pi}{k+1}\right]$ and consequently at $z=\frac{1}{2}+it_j$ for $19(k+1)/50\pi \leq j \leq [k/6]-1$. The strategy is based on an integral representation of $f_{k,m}$ which is equivalent to the generating function formula for $f_{k,m}$ established in \cite{DJ08}. 

Let us define the function $H(\tau,z)$ as follows:
\begin{align*}
    H(\tau,z):=&H_1(\tau,z)+H_2(\tau,z),
\end{align*}
where
\begin{align*}
    H_1(\tau,z):=&-\frac{1}{2\pi i}\ell E_2(z) \frac{\Delta^{\ell}(z)}{\Delta^{\ell}(\tau)}\frac{E_{k'}(z)}{E_{k'}(\tau)}\frac{\frac{d}{d\tau}(j(\tau)-j(z))}{j(\tau)-j(z)}e^{-2\pi im\tau} \\
    &-\frac{1}{2\pi i}\frac{\Delta^{\ell}(z)}{\Delta^{\ell}(\tau)}\frac{DE_{k'}(z)}{E_{k'}(\tau)}\frac{\frac{d}{d\tau}(j(\tau)-j(z))}{j(\tau)-j(z)}e^{-2\pi im\tau}
\end{align*}
and
\begin{align*}
    H_2(\tau,z):=\frac{1}{4\pi^2}\frac{\Delta^{\ell}(z)}{\Delta^{\ell}(\tau)}\frac{E_{k'}(z)}{E_{k'}(\tau)}\frac{j'(z)j'(\tau)}{(j(\tau)-j(z))^2}e^{-2\pi im\tau}.
\end{align*}

Note that $\Im(z)=\frac{1}{2}\cot \theta>\frac{\sqrt 3}{2}=\Im(\rho)$.

\begin{lemma}\label{lem_DJ08 lemma2}\label{lem_integration A'}
    Let $A' \in \left[\left.\frac{\sqrt 3}{2},\frac{1}{2}\cot \theta\right)\right.$. For any $m \in \Z$ and sufficiently large $A>0$, we have
    \begin{align}\label{eqn_Df integral expression}
        Df_{k,m}(z)=\int_{-\frac{1}{2}+iA}^{\frac{1}{2}+iA} H(\tau,z)d\tau
    \end{align}
    and
    \begin{align}\label{eqn_integral Htauz}
        \int_{-\frac{1}{2}+iA'}^{\frac{1}{2}+iA'} H(\tau,z)d\tau=Df_{k,m}(z)+g_{k'}(z),
    \end{align}
    where
    \begin{align*}
        g_{k'}(z):=&\frac{e^{-2\pi i m z}}{E_{14}(z)E_{k'}(z)}\left[DE_{14}(z)E_{k'}(z)^3(E_{k'}(z)-1)+DE_{k'}(z)E_{14}(z)(E_{k'}(z)^2-1)\right.\\
        &\left.-E_2(z)E_{14}(z)E_{k'}(z)(E_{k'}(z)^3-(\ell+1)E_{k'}(z)^2+\ell)+mE_{14}(z)E_{k'}(z)^3\right].
    \end{align*}
    In particular,  if $k'=0$ we have
    \begin{align*}
        \int_{-\frac{1}{2}+iA'}^{\frac{1}{2}+iA'} H(\tau,z)d\tau=Df_{k,m}(z)+me^{-2\pi i m z}.
    \end{align*}
\end{lemma}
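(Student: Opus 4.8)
The plan is to express $Df_{k,m}$ as a horizontal period integral of $H$ by combining the Duke--Jenkins generating function with Fourier-coefficient extraction, and then to deduce \eqref{eqn_integral Htauz} by sliding that contour down to height $A'$ and accounting for the pole it crosses at $\tau=z$. First I would recall from \cite{DJ08} the closed form of the two-variable generating series,
\[
\mathcal K(\tau,z):=\sum_{m\ge-\ell}f_{k,m}(z)\,e^{2\pi im\tau}
=\frac{\Delta^{\ell}(z)E_{k'}(z)}{\Delta^{\ell}(\tau)E_{k'}(\tau)}\cdot\frac{-Dj(\tau)}{j(\tau)-j(z)},
\]
which converges absolutely for $\Im\tau>\Im z$. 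For $\tau$ on a horizontal line of height $A>\Im z$ the $m$-th Fourier coefficient in $\tau$ of $\mathcal K(\cdot,z)$ is $f_{k,m}(z)$, so $f_{k,m}(z)=\int_{-\frac12+iA}^{\frac12+iA}\mathcal K(\tau,z)e^{-2\pi im\tau}\,d\tau$. Applying $D=\frac1{2\pi i}\partial_z$ and differentiating under the integral sign (legitimate because the integrand is holomorphic in $z$, uniformly on the compact contour) gives $Df_{k,m}(z)=\int_{-\frac12+iA}^{\frac12+iA}\big(D_z\mathcal K(\tau,z)\big)e^{-2\pi im\tau}\,d\tau$; expanding $D_z\big(\Delta^{\ell}(z)E_{k'}(z)/(j(\tau)-j(z))\big)$ by the product and quotient rules and using $D\Delta=E_2\Delta$ and $Dj=-E_{14}/\Delta$, one checks termwise that $\big(D_z\mathcal K\big)e^{-2\pi im\tau}=H_1+H_2$, with $H_2$ precisely the term coming from differentiating the factor $(j(\tau)-j(z))^{-1}$. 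This is \eqref{eqn_Df integral expression}.

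For \eqref{eqn_integral Htauz} I would slide the contour from height $A$ down to height $A'$. The integrand $H(\cdot,z)$ is $1$-periodic in $\tau$, and its apparent poles at the zeros of $E_{k'}(\tau)$ are harmless because the factor $j'(\tau)=-2\pi iE_{14}(\tau)/\Delta(\tau)$ vanishes at exactly those points to at least the same order; thus, after using periodicity to replace both horizontal integrals by integrals over $[0,1]$ at heights $A$ and $A'$, the only singularity of $H(\cdot,z)$ inside the rectangle bounded by these two segments and the lines $\Re\tau=0,1$ is $\tau=z$. This is where the hypotheses enter: $\Im z=\tfrac12\cot\theta>\tfrac{\sqrt3}{2}$ forces $\Im(\gamma z)<\tfrac{\sqrt3}{2}\le A'$ for every $\gamma\in\Gamma$ whose image $\gamma z$ is not a translate $z+n$ of $z$, so no other pole is crossed, while $A'<\Im z$ keeps $\tau=z$ strictly between the two segments. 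The two vertical sides of the rectangle cancel by periodicity, and the residue theorem yields $\int_{-\frac12+iA'}^{\frac12+iA'}H(\tau,z)\,d\tau-Df_{k,m}(z)=2\pi i\operatorname*{Res}_{\tau=z}H(\tau,z)=:g_{k'}(z)$.

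It remains to compute this residue. Since $H_1$ has a simple pole and $H_2$ a double pole at $\tau=z$, I would write $j(\tau)-j(z)=j'(z)(\tau-z)+\tfrac12 j''(z)(\tau-z)^2+\cdots$, Taylor-expand $E_{14}(\tau),E_{k'}(\tau),\Delta(\tau)$ and $e^{-2\pi im\tau}$ to first order in $\tau-z$, collect the coefficient of $(\tau-z)^{-1}$, and rewrite everything through logarithmic derivatives ($\Delta'/\Delta=2\pi iE_2$, $E_{14}'/E_{14}=2\pi i\,DE_{14}/E_{14}$, and so on, together with $D^2j/Dj=DE_{14}/E_{14}-E_2$). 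Simplifying produces the displayed expression for $g_{k'}(z)$; substituting $E_{k'}\equiv E_0=1$ (so $DE_{k'}=0$) collapses it to $me^{-2\pi imz}$, which is the final assertion of the lemma.

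The main obstacle is this last step: it is a bookkeeping-heavy manipulation in the ring of quasimodular forms, and the signs, the powers of $2\pi i$, and in particular the $(\tau-z)^{-1}$-coefficient arising from the \emph{double} pole in $H_2$ are where errors are easiest to make. A secondary point needing care is the justification in the second step that the contour can be lowered all the way to $A'\ge\tfrac{\sqrt3}{2}$ without meeting any pole other than $\tau=z$, i.e.\ that the apparent singularities of $H$ along $E_{k'}(\tau)=0$ are removable and that the bound $\Im z>\tfrac{\sqrt3}{2}$ genuinely keeps the rest of the orbit $\Gamma z$ below the strip.
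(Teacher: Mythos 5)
Your proposal follows essentially the same route as the paper: equation \eqref{eqn_Df integral expression} is obtained by differentiating the Duke--Jenkins generating-function identity under the integral sign (your kernel $\mathcal K$ is exactly the one whose $z$-derivative is $H_1+H_2$), and \eqref{eqn_integral Htauz} follows by lowering the contour past the unique $\Gamma$-orbit point of $z$ in the strip and computing $2\pi i\operatorname{Res}_{\tau=z}H(\tau,z)=g_{k'}(z)$. The only (cosmetic) difference is that you shift the period integral to $[0,1]$ so that $\tau=z$ is an interior pole, whereas the paper keeps the strip $|\Re\tau|\le 1/2$ and picks up half-residues at the boundary poles $\tau=z$ and $\tau=z-1$; both yield the same total residue, and your level of detail on the final residue bookkeeping matches the paper's.
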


\begin{proof}
    The integral expression \eqref{eqn_Df integral expression} can be readily obtained by taking the derivative of the equation in \cite[Lemma 2]{DJ08}.

    To prove \eqref{eqn_integral Htauz}, note that for sufficiently large $A$ and $A' \in \left[\left.\frac{\sqrt 3}{2},\frac{1}{2}\cot \theta\right)\right.$, the only possible poles of~$H(\tau,z)$ in the rectangle $\{\tau \in \Ha : |  \Re(\tau)|  \leq 1/2, \text{ } A'\leq \Im(\tau) \leq A\}$ are $\tau=z$ and $\tau=z-1$. Indeed, this assertion can be easily verified as follows; if $\tau$ is a pole of $H(\tau,z)$, then $j(\tau)=j(z)$. Note that for given $c \in \mathbb C$, there is a unique $\tau \in F$ such that $j(\tau)=c$. Thus, $\tau=\gamma z=\gamma\left(\frac{1}{2}+\frac{i}{2}\cot \theta\right)$ for~$\gamma \in \Gamma$. Among these $\tau$, the ones belong to our rectangle are $\tau=z$ and $\tau=z-1$.
    
    By vertically shifting the contour of integration in \eqref{eqn_Df integral expression} to a lower height $A'$, while ensuring the avoidance of the poles $z$ and $z-1$ through the inclusion of a clockwise circular path encircling both points, 
   we obtain
    \begin{align*}
        \int_{-\frac{1}{2}+iA'}^{\frac{1}{2}+iA'} H(\tau,z)d\tau&=Df_{k,m}(z)+\pi i \text{Res}_{\tau=z} H(\tau,z)+\pi i \text{Res}_{\tau=z-1} H(\tau,z) \\
        &=Df_{k,m}(z)+2\pi i \text{Res}_{\tau=z}H(\tau,z).
    \end{align*}
    The proof is completed by invoking the relation $Dj(z)=-E_{14}(z)/\Delta(z)$ and evaluating the residues of $H(\tau,z)$ at $\tau=z$.
\end{proof}

    

Let $z_0:=\frac{1}{2}+\frac{i}{2}\cot\left(\frac{\pi}{12}\right)$. Note that $\theta\geq \frac{\ell+1}{k+1}\pi > \frac{\pi}{12}$ implies $\Im(z)<\Im(z_0)$. Hence, we have $\Im(Sz)>\Im(Sz_0)=\sin \left(\frac{\pi}{6}\right)=\frac{1}{2}>\frac{1}{3}$. On the other hand, $\Im(Sz)=\frac{\Im(z)}{|z|^2}=\frac{2\cot \theta}{1+\cot^2\theta}=\sin 2\theta$, so we have $\sin 2\theta=\Im(Sz)>1/3$.

\begin{lemma}\label{lem_H'' approximation}
    For $A' \in \left[\left.\frac{\sqrt 3}{2},\frac{1}{2}\cot \theta\right)\right.$ and $A'' \in \left(\frac{1}{3}, \sin 2\theta \right)$, we have
    \begin{align*}
        \int_{-\frac{1}{2}+iA'}^{\frac{1}{2}+iA'} H(\tau,z)d\tau=&\int_{-\frac{1}{2}+iA''}^{\frac{1}{2}+iA''} H(\tau,z)d\tau \\
        &-2mi^{-k-2}|  z|  ^{-k-2}e^{2\pi m \sin 2\theta}\cos\left((k+2)\theta +4\pi m \sin^2 \theta\right) \\
        &+\frac{k}{\pi}i^{-k}|  z|  ^{-k-1}e^{2\pi m \sin 2\theta}\cos\left((k+1)\theta +4\pi m \sin^2 \theta\right).
    \end{align*}
    In particular, if $m=0$, then we have
    \begin{align*}
        \int_{-\frac{1}{2}+iA'}^{\frac{1}{2}+iA'} H(\tau,z)d\tau=&\int_{-\frac{1}{2}+iA''}^{\frac{1}{2}+iA''} H(\tau,z)d\tau +(-1)^{k/2}\frac{k}{\pi}|  z|  ^{-k-1}\cos\left((k+1)\theta\right).
    \end{align*}
\end{lemma}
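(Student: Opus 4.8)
The plan is to repeat the contour-shifting argument of Lemma~\ref{lem_integration A'}, but now moving the horizontal contour from height $A'$ down past the two poles $\tau=z$ and $\tau=z-1$ to the lower height $A'' \in (1/3, \sin 2\theta)$. The hypothesis $A'' < \sin 2\theta = \Im(Sz)$ is exactly what guarantees that, among all $\Gamma$-translates $\gamma z$ of $z$ with $j(\gamma z)=j(z)$, the only ones whose imaginary part exceeds $A''$ are $z$ and $z-1$ itself (any other translate lands at height $\le \Im(Sz)$, with the bound being realized precisely by $Sz$ and its translates); this is the content of the paragraph preceding the lemma, and it shows no new poles of $H(\tau,z)$ enter the rectangle $\{|\Re(\tau)| \le 1/2,\ A'' \le \Im(\tau) \le A'\}$. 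Hence by the residue theorem,
\begin{align*}
    \int_{-\frac{1}{2}+iA'}^{\frac{1}{2}+iA'} H(\tau,z)\,d\tau = \int_{-\frac{1}{2}+iA''}^{\frac{1}{2}+iA''} H(\tau,z)\,d\tau - 2\pi i\,\mathrm{Res}_{\tau=z} H(\tau,z),
\end{align*}
using (as in Lemma~\ref{lem_integration A'}) that the two vertical sides of the rectangle cancel by the periodicity $H(\tau+1,z)=H(\tau,z)$ and that $\mathrm{Res}_{\tau=z-1}H(\tau,z) = \mathrm{Res}_{\tau=z}H(\tau,z)$.

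The bulk of the work is then the explicit evaluation of $2\pi i\,\mathrm{Res}_{\tau=z}H(\tau,z)$. One writes $H = H_1 + H_2$ and computes separately: near $\tau=z$ one has $j(\tau)-j(z) = j'(z)(\tau-z) + \tfrac{1}{2}j''(z)(\tau-z)^2 + \cdots$, so $H_2$ has a double pole contributing a residue involving $j''(z)/j'(z)$, while the $\tfrac{d}{d\tau}(j(\tau)-j(z))/(j(\tau)-j(z))$ factor in $H_1$ has a simple pole with residue $1$. Collecting terms and using the identity $Dj = -E_{14}/\Delta$ (equivalently $j' = -2\pi i\, E_{14}/\Delta$) together with the standard formula $D^2 j / Dj = DE_{14}/E_{14} - DE_{4}/E_4 - DE_6/E_6$ — or more simply just the relation between $j''/j'$, $E_2$, and the logarithmic derivatives — one finds that $2\pi i\,\mathrm{Res}_{\tau=z}H(\tau,z)$ simplifies dramatically: the modular combination collapses and the $E_{k'}$, $\Delta$ factors disappear, leaving only an elementary expression. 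Setting $m=0$ should make $H_2$'s contribution combine with part of $H_1$ to produce the single cosine term, while the $m\ne 0$ case keeps the extra $e^{-2\pi i m \tau}$ evaluated at $\tau = z$.

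The last step is to rewrite this residue in the trigonometric form appearing in the statement, using $z = \tfrac{1}{2} + \tfrac{i}{2}\cot\theta = i e^{-i\theta}|z|$, so that $z^{-k-1} = i^{-k-1} e^{i(k+1)\theta} |z|^{-k-1}$ and $z^{-k-2} = i^{-k-2} e^{i(k+2)\theta}|z|^{-k-2}$, and $e^{-2\pi i m z} = e^{2\pi m \sin 2\theta} \cdot e^{\text{(phase)}}$ where the phase works out to $-4\pi m \sin^2\theta$ after using $\Re(z)=1/2$; taking real parts (legitimate because $Df_{k,m}$ and the other integral, evaluated on $\Re=1/2$ for a form with real coefficients, transform predictably under $z \mapsto 1-\bar z$ — or simply because the claimed identity is an identity of holomorphic functions that one checks termwise) yields the stated $\cos((k+2)\theta + 4\pi m \sin^2\theta)$ and $\cos((k+1)\theta + 4\pi m\sin^2\theta)$ terms with the displayed powers of $i$ and $|z|$.

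I expect the main obstacle to be the residue computation of $H_2$ at the double pole $\tau = z$: one must expand $j(\tau)-j(z)$ to second order, invert the square, and correctly combine the resulting $j''(z)/j'(z)^2$ and $1/j'(z)$ terms against the simple-pole contribution from $H_1$, then recognize that the $E_2(z)$ pieces cancel (they must, since the final answer is $E_2$-free) — this cancellation is the delicate bookkeeping step. Everything after that is the routine but lengthy trigonometric rewriting using $z = i e^{-i\theta}|z|$; I would not grind through it in the proof beyond indicating the substitutions.
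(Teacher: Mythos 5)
There is a genuine error in your identification of the poles. The poles $\tau=z$ and $\tau=z-1$ sit at height $\Im(z)=\tfrac{1}{2}\cot\theta$, which is \emph{above} $A'$ (recall $A'<\tfrac{1}{2}\cot\theta$ by hypothesis); they were already crossed in the previous shift from large $A$ down to $A'$ in Lemma~\ref{lem_integration A'}, and their residues produce $g_{k'}(z)$ (resp.\ $me^{-2\pi imz}$ when $k'=0$), not the trigonometric terms in the present statement. The poles crossed when moving from $A'$ down to $A''$ are $\tau=Sz=-1/z$ and $\tau=ST^{-1}z=-1/(z-1)$, both at height $\Im(Sz)=\sin 2\theta\in(1/2,\sqrt3/2)$, hence strictly between $A''$ and $A'$. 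Your reading of the hypothesis is also inverted: $A''<\sin 2\theta$ is what guarantees these two poles \emph{are} swept by the shift, while $A''>1/3$ guarantees that no further $\Gamma$-translates of $z$ (all of which lie at height $\le 1/3$ on the relevant strip, by the computation $\sup_{y\ge\sqrt3/2}\Im(ST^{\pm1}(\mp\tfrac12+iy))=\tfrac13$) enter the rectangle.

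This is not a cosmetic slip, because the source of the final formula is the automorphy factors at these $S$-translates: one uses $\Delta^{\ell}E_{k'}(-1/z)=z^{k}\Delta^{\ell}E_{k'}(z)$ and $\Delta^{\ell}E_{k'}(-1/(z-1))=(z-1)^{k}\Delta^{\ell}E_{k'}(z)$, and it is the factors $z^{-k}=i^{-k}|z|^{-k}e^{ik\theta}$ and $(z-1)^{-k}=i^{-k}|z|^{-k}e^{-ik\theta}$, paired over the two conjugate poles, that produce $i^{-k}|z|^{-k-1}\cos((k+1)\theta+\cdots)$; likewise $e^{-2\pi im(-1/z)}$ and $e^{-2\pi im(-1/(z-1))}$ give the $e^{2\pi m\sin2\theta}$ and the phase $4\pi m\sin^2\theta$. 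A residue computation at $\tau=z$, where all these ratios are identically $1$, produces no power of $z$ at all and cannot yield the claimed expression. The remaining structure of your argument (periodicity of $H$ in $\tau$ killing the vertical sides, simple pole of $H_1$ and double pole of $H_2$, cancellation of the $E_2$- and $DE_{k'}/E_{k'}$-contributions between $H_1$ and $H_2$) does match the paper once transported to the correct poles, where the delicate step is exactly the cancellation identity
\begin{align*}
\left(\mathrm{Res}_{\tau=\frac{-1}{z}}+\mathrm{Res}_{\tau=\frac{-1}{z-1}}\right)\left(H_1(\tau,z)+\tfrac{i}{4}\left(2\pi\ell E_2(z)+2\pi\tfrac{DE_{k'}(z)}{E_{k'}(z)}\right)\left(e^{2\pi im/z}z^{-k}+e^{2\pi im/(z-1)}(z-1)^{-k}\right)\right)=0,
\end{align*}
after which only the elementary terms survive.
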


\begin{proof}
    We observe that $$\text{sup}_{y \geq \sqrt 3/2}\Im\left(ST^{-1}\left(-\frac{1}{2}+iy\right)\right)=\text{sup}_{y \geq \sqrt 3/2}\Im\left(S\left(-\frac{3}{2}+iy\right)\right)=\frac{1}{3}.$$
    Thus, for any $\tau$ with $\Re(\tau)=-1/2$ and $\Im(\tau)\geq \frac{\sqrt{3}}{2}$, we have $\Im(ST^{-1}(\tau)) \leq 1/3$. Similarly, we have $\Im(ST(\tau)) \leq 1/3$ when $\Re(\tau)=1/2$ and $\Im(\tau)\geq \frac{\sqrt3}{2}$. Consequently, in the region $\{\tau \in \Ha : |  \Re(\tau)|  \leq 1/2, \text{ } A''\leq \Im(\tau) \leq A'\}$, the only poles of $H(\tau,z)$ are located at $\tau=ST^{-1}z=\frac{-1}{z-1}$ and $\tau=Sz=\frac{-1}{z}$ (see Figure~\ref{fig_poles in the region}).

\begin{figure}[h]
\ctikzfig{zeros_fig1}
\caption{Poles in the region $\{\tau \in \Ha : |  \Re(\tau)|  \leq 1/2, \text{ } A''\leq \Im(\tau) \leq A'\}$}\label{fig_poles in the region}
\end{figure}
    
    By reducing the height of the integration to $A''$, we obtain
    \begin{align*}
        \int_{-\frac{1}{2}+iA'}^{\frac{1}{2}+iA'} H(\tau,z)d\tau=\int_{-\frac{1}{2}+iA''}^{\frac{1}{2}+iA''} H(\tau,z)d\tau-2\pi i \left(\text{Res}_{\tau=\frac{-1}{z-1}}H(\tau,z)+\text{Res}_{\tau=\frac{-1}{z}}H(\tau,z)\right).
    \end{align*}
    Note that for $z=\frac{1}{2}+\frac{i}{2}\cot \theta$, we have 
    \begin{align*}
        z&=ie^{-i\theta}|  z|  , 
        &z-1&=ie^{i\theta}|  z|  , \\
        -(z-1)^{-1}&=2i(\sin \theta) e^{-i\theta},
        &-z^{-1}&=2i(\sin \theta)e^{i\theta}.
    \end{align*}

Since $H_1(\tau,z)$ has a simple pole at $\tau=-1/(z-1)$, we can compute the residue as follows. 
\begin{align*}
    &\text{Res}_{\tau=\frac{-1}{z-1}}H_1(\tau,z)=\lim_{\tau \to -1/(z-1)}\left(\tau+\frac{1}{z-1}\right)H_1(\tau,z) \\
    &=-\frac{1}{2\pi i}\ell E_2(z) \frac{\Delta^{\ell}(z)}{\Delta^{\ell}(-1/(z-1))}\frac{E_{k'}(z)}{E_{k'}(-1/(z-1))}e^{-2\pi im(-1/(z-1))}j'\left(-\frac{1}{z-1}\right)\lim_{\tau \to -1/(z-1)}\frac{\tau+\frac{1}{z-1}}{j(\tau)-j(z)} \\
    &-\frac{1}{2\pi i}\frac{\Delta^{\ell}(z)}{\Delta^{\ell}(-1/(z-1))}\frac{DE_{k'}(z)}{E_{k'}(-1/(z-1))}e^{-2\pi im(-1/(z-1))}j'\left(-\frac{1}{z-1}\right)\lim_{\tau \to -1/(z-1)}\frac{\tau+\frac{1}{z-1}}{j(\tau)-j(z)}.
\end{align*}
As $\Delta^{\ell}E_k(-1/(z-1))=(z-1)^{-k}\Delta^{\ell}E_k(z)$ and $j(-1/(z-1))=j(z)$, the residue is given by
\begin{align*}
    \text{Res}_{\tau=\frac{-1}{z-1}}H_1(\tau,z)=-\frac{1}{2\pi i}\ell E_2(z)(z-1)^{-k}e^{-2\pi im(-1/(z-1))}-\frac{1}{2\pi i}(z-1)^{-k}\frac{DE_{k'}(z)}{E_{k'}(z)}e^{-2\pi im(-1/(z-1))}
\end{align*}
Using $-1/(z-1)=2i(\sin \theta)e^{-i\theta}$ and $(z-1)^{-k}=i^{-k}|z|^{-k}e^{-ik\theta}$, we obtain
    
    \begin{align*}
        &\text{Res}_{\tau=\frac{-1}{z-1}}H_1(\tau,z)=-\frac{1}{2\pi i}\ell E_2(z)i^{-k}|  z|  ^{-k}e^{-ik\theta+4\pi m (\sin \theta) e^{-i\theta}}-\frac{1}{2\pi i}\frac{DE_{k'}(z)}{E_{k'}(z)}i^{-k}|  z|  ^{-k}e^{-ik\theta+4\pi m (\sin \theta) e^{-i\theta}}
    \end{align*}
Similarly, we can obtain
    \begin{align*}
        &\text{Res}_{\tau=\frac{-1}{z}}H_1(\tau,z)=-\frac{1}{2\pi i}\ell E_2(z)i^{-k}|  z|  ^{-k}e^{ik\theta+4\pi m (\sin \theta) e^{i\theta}}-\frac{1}{2\pi i}\frac{DE_{k'}(z)}{E_{k'}(z)}i^{-k}|  z|  ^{-k}e^{ik\theta+4\pi m (\sin \theta) e^{i\theta}}.
    \end{align*}

On the other hand, the function $H_2(\tau,z)$ has a double pole at $\tau=-1/(z-1)$, we compute the residue by
\begin{align*}
    \text{Res}_{\tau=\frac{-1}{z-1}}H_2(\tau,z)=\lim_{\tau \to -1/(z-1)}\frac{d}{d\tau}\left(\left(\tau+\frac{1}{z-1}\right)^2H_2(\tau,z)\right)
\end{align*}
Using the equation $\Delta'(\tau)=2\pi i \Delta(\tau)E_2(\tau)$ and the relations the values of modular forms at $\tau=-1/(z-1)$ and at $\tau=z$ as before, we get
    \begin{align*}
    \text{Res}_{\tau=\frac{-1}{z-1}}H_2(\tau,z)&=-\frac{i}{4\pi^2}e^{2\pi i m/(z-1)}(z-1)^{-k-2}\left(2\pi m + 2\pi \ell (z-1)^2 E_2(z)-ik(z-1)+2\pi (z-1)^2 \frac{DE_{k'}(z)}{E_{k'}(z)}\right).
    \end{align*}
Similarly, for $\tau=-1/z$, we have
    \begin{align*}
        \text{Res}_{\tau=\frac{-1}{z}}H_2(\tau,z)&=-\frac{i}{4\pi^2}e^{2\pi i m/z}z^{-k-2}\left(2\pi m + 2\pi \ell z^2 E_2(z)-ikz+2\pi z^2 \frac{DE_{k'}(z)}{E_{k'}(z)}\right).
    \end{align*}

    Observe that from the above equations for residues, one has 
    \begin{align*}
         \left(\text{Res}_{\tau=\frac{-1}{z}}+\text{Res}_{\tau=\frac{-1}{z-1}}\right)\left(H_1(\tau,z)+\frac{i}{4}\left(2\pi \ell E_2(z)+2\pi \frac{DE_{k'}(z)}{E_{k'}(z)}\right)\left(e^{2\pi i m/z}z^{-k}+e^{2\pi i m/(z-1)}(z-1)^{-k}\right)\right)=0.
    \end{align*}
    Therefore, calculating the remaining terms of residues leads to the desired result.
\end{proof}

We establish the following proposition using Lemma~\ref{lem_DJ08 lemma2} and Lemma~\ref{lem_H'' approximation}, which leads to a direct deduction of Theorem \ref{thm_gap ftn}(a). When combined with Proposition~\ref{prop_number of zeros of f_k,0}, this also deduces Theorem~\ref{thm_gap ftn}(b).

\begin{proposition}\label{prop_main thm for gap ftn}
    Let $k'=0$ and let $A''=0.49$. If $k \geq 1116$ and $0.38 \leq \theta < \pi/6$,
    then we have
    \begin{align*}
        \left|\int_{-\frac{1}{2}+iA''}^{\frac{1}{2}+iA''} H(\tau,z)d\tau\right|<\frac{k}{\pi}|  z|  ^{-k-1}.
    \end{align*}
    Consequently, if $k$ is large enough, the sign of $Df_{k,0}\left(\frac{1}{2}+it_j\right)$ is $(-1)^j$ for $19(k+1)/50\pi \leq j \leq [k/6]-1$.
\end{proposition}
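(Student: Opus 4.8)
The plan is to deduce the proposition from Lemma~\ref{lem_DJ08 lemma2} and Lemma~\ref{lem_H'' approximation} together with an explicit bound on the integral $\int_{-1/2+iA''}^{1/2+iA''}H(\tau,z)\,d\tau$, after which the sign statement is immediate. First specialize both lemmas to $k'=0$, $m=0$. Since $z=\tfrac12+\tfrac i2\cot\theta$ with $\theta<\pi/6$ there is an admissible $A'\in[\tfrac{\sqrt3}{2},\tfrac12\cot\theta)$, so Lemma~\ref{lem_DJ08 lemma2} gives $\int_{-1/2+iA'}^{1/2+iA'}H(\tau,z)\,d\tau=Df_{k,0}(z)$; and $A''=0.49\in(\tfrac13,\sin 2\theta)$ for every $\theta\in[0.38,\pi/6)$ because $\sin(2\cdot0.38)>0.49$, so Lemma~\ref{lem_H'' approximation} gives $\int_{-1/2+iA'}^{1/2+iA'}H(\tau,z)\,d\tau=\int_{-1/2+iA''}^{1/2+iA''}H(\tau,z)\,d\tau+(-1)^{k/2}\tfrac{k}{\pi}|z|^{-k-1}\cos((k+1)\theta)$. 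As $k\equiv 0\pmod{12}$ forces $(-1)^{k/2}=1$, subtracting gives
\[
Df_{k,0}(z)=\int_{-1/2+iA''}^{1/2+iA''}H(\tau,z)\,d\tau+\frac{k}{\pi}|z|^{-k-1}\cos((k+1)\theta).
\]
Granting $\bigl|\int_{-1/2+iA''}^{1/2+iA''}H(\tau,z)\,d\tau\bigr|<\tfrac{k}{\pi}|z|^{-k-1}$, the sign of $Df_{k,0}(z)$ coincides with that of $\cos((k+1)\theta)$; for $z=\tfrac12+it_j$ one has $(k+1)\theta_j=j\pi$, so $\cos((k+1)\theta_j)=(-1)^j$, and since $\theta_j\ge0.38$ exactly for $j\ge\tfrac{19(k+1)}{50\pi}$ while $\theta_j<\pi/6$ for $j\le[k/6]-1$, the estimate applies on the whole stated range and yields the claimed sign once $k\ge1116$. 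So everything reduces to the integral bound.

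For the integral bound, parametrize $\tau=x+iA''$, $|x|\le\tfrac12$, and use $\bigl|\int H\,d\tau\bigr|\le\int_{-1/2}^{1/2}(|H_1|+|H_2|)\,dx$. With $k'=0,m=0$, setting $\ell=k/12$ and $j'(w)=-2\pi i E_4(w)^2E_6(w)/\Delta(w)$, the integrand becomes $H_1(\tau,z)=-\tfrac{\ell E_2(z)}{2\pi i}\tfrac{\Delta(z)^\ell}{\Delta(\tau)^\ell}\tfrac{j'(\tau)}{j(\tau)-j(z)}$ and $H_2(\tau,z)=\tfrac{1}{4\pi^2}\tfrac{\Delta(z)^\ell}{\Delta(\tau)^\ell}\tfrac{j'(z)j'(\tau)}{(j(\tau)-j(z))^2}$. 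The decisive factor is $|\Delta(z)/\Delta(\tau)|^\ell$. From $\Delta=q\prod(1-q^n)^{24}$ the modulus $|\Delta(x+iA'')|$ is minimized over $x$ at $x=0$, so $|\Delta(z)/\Delta(\tau)|\le|\Delta(z)|/\Delta(iA'')$ pointwise, while $|z|^{-1}=2\sin\theta$. The key estimate is
\[
\frac{|\Delta(z)|}{\Delta(iA'')}\le(2\sin\theta)^{12}\qquad\text{for all }\theta\in[0.38,\pi/6),
\]
which, using $|q_z/q_\tau|=e^{2\pi(0.49-\frac12\cot\theta)}$, the explicit constant $|q_\tau|=e^{-0.98\pi}$, the identity $\Delta(iA'')=|q_\tau|\prod_n(1-|q_\tau|^n)^{24}$, and $|\Delta(z)|\le|q_z|\exp\!\bigl(24|q_z|/(1-|q_z|)\bigr)$, reduces after taking logarithms to an inequality that is monotone in $\theta$, is verified quantitatively at $\theta=0.38$ (where the margin is thinnest) by pentagonal-number/$q$-expansion bounds, and becomes slack as $\theta\to\pi/6$, where the right side is $1$. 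Write $D_0(\theta)=|\Delta(z)|/\Delta(iA'')=\lambda(\theta)(2\sin\theta)^{12}$ with $\lambda(\theta)\le\lambda(0.38)<1$.

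It remains to handle the auxiliary factors and the concentration of $|\Delta(\tau)|^{-\ell}$ near $x=0$. Split $[-\tfrac12,\tfrac12]$ into $\{|x|\le\epsilon\}$ and $\{\epsilon<|x|\le\tfrac12\}$. On $|x|\le\epsilon$ the quantities $E_2(z)$, $E_4(\tau)$, $E_6(\tau)$, $j(\tau)$ stay within controlled ranges of their explicit values at $\tau=iA''$; in particular $|j(\tau)|$ is large while $|j(z)|$ is bounded by a fixed constant (as $j(\tfrac12+it)$ is real and monotone in $t$ on the relevant interval), so $\tfrac{|E_{14}(\tau)|}{|\Delta(\tau)|\,|j(\tau)-j(z)|}$ is bounded by an absolute constant, and the Laplace-type estimate $|\Delta(x+iA'')|\ge\Delta(iA'')e^{cx^2}$ with explicit $c>0$ (the second $x$-derivative of $\log|\Delta|$ at $0$) gives $\int_{|x|\le\epsilon}|H_1|\,dx\ll\sqrt\ell\,D_0(\theta)^\ell$ and $\int_{|x|\le\epsilon}|H_2|\,dx\ll D_0(\theta)^\ell$. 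On $\epsilon<|x|\le\tfrac12$ one has $|\Delta(x+iA'')|\ge\nu\,\Delta(iA'')$ with $\nu=\nu(\epsilon)>1$, while $E_4,E_6,E_{14}$ are bounded above and $|j(\tau)-j(z)|$ is bounded \emph{below} by a positive constant: no $\Gamma$-translate of $z=\tfrac12+\tfrac i2\cot\theta$, $\theta\in[0.38,\pi/6]$, has imaginary part $0.49$, since the equation $c^2t^2+(c/2+d)^2=t/0.49$ has no solution with $c,d\in\Z$ and $t=\tfrac12\cot\theta\in[\tfrac{\sqrt3}{2},\tfrac12\cot0.38]$, whence $j(\tau)\ne j(z)$ on the contour and compactness supplies the bound; so $\int_{\epsilon<|x|\le1/2}(|H_1|+|H_2|)\,dx\ll\nu^{-\ell}D_0(\theta)^\ell$ is negligible. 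Collecting, $\int_{-1/2}^{1/2}(|H_1|+|H_2|)\,dx\le C\sqrt\ell\,D_0(\theta)^\ell\le C\sqrt\ell\,\lambda(0.38)^\ell(2\sin\theta)^{12\ell}$ for an explicit $C$, and since $\tfrac{k}{\pi}|z|^{-k-1}=\tfrac{12\ell}{\pi}(2\sin\theta)^{12\ell+1}\ge\tfrac{12\ell}{\pi}(2\sin0.38)(2\sin\theta)^{12\ell}$, the required inequality holds as soon as $C\sqrt\ell\,\lambda(0.38)^\ell<\tfrac{12}{\pi}(2\sin0.38)\,\ell$, i.e.\ for $\ell$ --- equivalently $k$ --- beyond an explicit threshold, which the numerics locate at $k=1116$.

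The step I expect to be the main obstacle is the key estimate $|\Delta(z)|/\Delta(iA'')\le(2\sin\theta)^{12}$ near $\theta=0.38$: it holds there only by a thin margin --- this is precisely why the hypothesis reads $\theta\ge0.38$, cf.\ the remark following Theorem~\ref{thm_gap ftn} --- so every auxiliary bound (on $E_2$ at $z$, on $\Delta$ at height $0.49$, on $j(\tau)-j(z)$) must be carried out with $\theta$-dependent rather than uniform constants and checked quantitatively at the critical endpoint. A related nuisance is the contour split: away from $\Re\tau=0$ each of $1/|\Delta(\tau)|^\ell$, $1/|j(\tau)-j(z)|$ and $1/|E_4(\tau)|$ may be large, and one must keep the genuinely large factor $1/|\Delta(\tau)|^\ell$ confined to a neighborhood of $\Re\tau=0$, where $|j(\tau)|$ is large and $E_4$ is bounded away from $0$, so that the remaining factors there stay tame.
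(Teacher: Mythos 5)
Your proposal follows essentially the same route as the paper: reduce the sign statement to the integral bound via Lemmas~\ref{lem_DJ08 lemma2} and~\ref{lem_H'' approximation} (with $(-1)^{k/2}=1$ and $(k+1)\theta_j=j\pi$), then bound $|H_1|+|H_2|$ on the contour $\Im\tau=0.49$ with the decisive factor being $\bigl|\Delta(z)/\bigl((2\sin\theta)^{12}\Delta(\tau)\bigr)\bigr|^{\ell}<1$ (the paper's ``$<0.99$'') against the linear-in-$\ell$ main term. The only difference is that you sketch justifications (contour splitting, $|\Delta(\tau)|\geq\Delta(iA'')$, absence of $\Gamma$-translates of $z$ at height $0.49$) for the four numerical bounds that the paper simply asserts as computations.
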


\begin{proof}
    \www{}
    We have the inequality
    \begin{align*}
         |z|^{k+1}\left|\int_{-\frac{1}{2}+iA''}^{\frac{1}{2}+iA''} H(\tau,z)d\tau\right|&=(2\sin \theta)^{-k-1}\left|\int_{-\frac{1}{2}+iA''}^{\frac{1}{2}+iA''} H(\tau,z)d\tau\right| \\
         &\leq (2\sin \theta)^{-k-1}\max_{|  x|  \leq 1/2}|  H_1(x+iA'',z)|  +(2\sin \theta)^{-k-1}\max_{|  x|  \leq 1/2}|  H_2(x+iA'',z)|.
         \end{align*}
         Note that since $E_{k'}=1$,
         \begin{align*}
         &(2\sin \theta)^{-k-1}|  H_1(\tau,z)|  +(2\sin \theta)^{-k-1}|  H_2(\tau,z)|   \\
         &=(2\sin \theta)^{-1}\left|\frac{\Delta(z)}{(2\sin \theta)^{12}\Delta(\tau)}\right|^{\ell}\left(\left|\ell E_2(z) \frac{E_{14}(\tau)}{\Delta(\tau)(j(\tau)-j(z))}\right|+\left|\frac{E_{14}(\tau)E_{14}(z)}{\Delta(\tau)\Delta(z)(j(\tau)-j(z))^2} \right|\right).
    \end{align*}
    Recall $z=ie^{-i\theta}|z|=ie^{-i\theta}/(2\sin \theta)$ for $0.38 \leq \theta < \frac{[k/6]\pi}{k+1}$ and $\tau=x+0.49i$. The following bounds can be obtained through numerical computations:
    \begin{align*}
        \left|E_2(z)\right|<1.15, \quad\quad
        \left|\frac{E_{14}(\tau)E_{14}(z)}{\Delta(\tau)\Delta(z)(j(\tau)-j(z))^2} \right| <0.6, \\
    \left|\frac{E_{14}(\tau)}{\Delta(\tau)((j(\tau)-j(z)))} \right|<4.2, \quad \text{ and } \quad\left|\frac{\Delta(z)}{(2\sin \theta)^{12}\Delta(\tau)}\right|<0.99.
    \end{align*}
    By combining these bounds, we complete the proof.
\end{proof}

\begin{remark}\label{rmk_for other k'}
    Proposition \ref{prop_main thm for gap ftn} is specific to $k' = 0$, as we have noted in the introduction. To see this, consider the case where $k' = 14$. Using the same argument as in the proof of Theorem~\ref{thm_gap ftn}, we find that for $0.38 \leq \theta < \pi/6$ and for all $k=12\ell+14$,
\begin{align*}
|  z|  ^{k+1}Df_{k,0}(z)=-|  z|  ^{k+1}g_{14}(z)+R(z), \quad \text{ where } |  R(z)|  <\frac{2k}{\pi}.
\end{align*}
Recall that $$g_{14}(z)=E_2(z)(E_{14}(z)^2-1)\ell+g_{14,0}(z).$$ Numerical computations show that for $z=\frac{1}{2}+\frac{i}{2}\cot \theta=ie^{-i\theta}|  z|  $ with $\theta \in \left[\frac{(\ell+1)\pi}{k+1},\frac{([k/6]-1)\pi}{k+1}\right]$, $E_2(z)(E_{14}(z)^2-1)$ is negative and bounded away from $0$, and $g_{14,0}(z)$ is positive and bounded away from $0$ for $0.38\leq \theta < \pi/6$. As $\theta$ approaches $\pi/6$, both $|  E_{14}(z)^2-1|  $ and $g_{14,0}(z)$ tend to $\infty$. Note that for $\theta_0$ close enough to $\pi/6$ (e.g., $\theta_0=0.511$), if we take $0.38 \leq \theta \leq \theta_0$, then $|  z|  >\epsilon_0$ for some uniform constant $\epsilon_0>1$. Therefore, for large $k$, the function $|  z|  ^{k+1}Df_{k,0}(z)$ never vanishes.

However, the situation changes if we don't restrict the size of $\theta$. It is possible for the real zeros of $Df_{k,0}$ to occur at $z$ with $k' \neq 0$ and $\pi/12<\theta<0.38$. For example, if we take $k=86$, then there are $6$ zeros of $Df_{k,0}$ in $F$. We can verify numerically that there are $4$ real zeros and $2$ non-real zeros among them. Such real zeros have an imaginary part larger than $\frac{1}{2}+\frac{i}{2}\cot(0.38) \approx 1.2518$, but smaller than $\frac{1}{2}+\frac{i}{2}\cot(\pi/12) \approx 1.8660$.
\end{remark}

\section{other remarks on the zeros}\label{sec_other remarks}

In the previous sections, we have observed that certain types of quasimodular forms have zeros solely on the line $\delta_2$,   while other types have more than quarter of their zeros on this line, but not all of them. In this section, we explore the existence of real zeros.


Let $\mathfrak{f}$ be a cusp form such that $\mathfrak{f}'=f$. Note that by the valence formula, if $k\equiv 0 \pmod{4}$ then $i$ must be a zero of $\mathfrak{f}$. Since $\mathfrak{f}(i\infty)=0$, we have
\begin{align*}
    f(iy_0)=-i\frac{\partial \mathfrak{f}}{\partial y}(iy_0)=0
\end{align*}
for some $y_0 \in (1, \infty)$. This argument can be applied to the line $\{z \in F: \Re(z)=\frac{1}{2}\}$, since $\mathfrak{f}\left(\frac{1}{2}+\frac{i}{2}\right)=(i-1)^k \mathfrak{f}(i)$. \ww{These partially proved Theorem~\ref{thm_cusp form 2 crit pt}, and thus it is enough to consider $k \equiv 2 \pmod{4}$.}

For a quasimodular form $f$, let us denote by $a_{\infty}(f)$ the first non-zero Fourier coefficient of $f$. Also, if $f$ is non-cuspidal, we define $\epsilon_f'$ and $\epsilon_f$ as follows: Let $\epsilon_f'$ be the sign of the product of the first two non-zero Fourier coefficients of $f$, say $a_0$ and $a_n$,  and
$$\epsilon_f:=(-1)^{n}\epsilon_f'.$$

To prove Theorem \ref{thm_cusp form 2 crit pt} for the remaining case, we'll show a slightly more \ww{general statement} as follows.

\begin{proposition}\label{prop_crit.of.modularform.lying.onthe.central}
Let $f$ be a modular form of weight $k$ and assume that at least one of the following conditions holds:
\begin{enumerate}[\normalfont(i)]
    \item $f$ is cuspidal and $k \equiv 0 \pmod{4},$
    \item $f$ is non-cuspidal with $\epsilon_f=-1$ and $k \equiv 0 \pmod 4$,
    \item $f$ is non-cuspidal with $\epsilon_f=1$ and $k \equiv 2 \pmod 4.$
\end{enumerate}
Then $f$ has a critical point lying on the line $\{z \in \Ha: \Re(z)=1/2\}$. If we replace the conditions \textnormal{(ii)} and \textnormal{(iii)}, respectively by 
\begin{enumerate}[\normalfont(i)]
\setcounter{enumi}{3}
    \item $f$ is non-cuspidal with $\epsilon_f'=-1$ and $k \equiv 0 \pmod 4$,
    \item $f$ is non-cuspidal with $\epsilon_f'=1$ and $k \equiv 2 \pmod 4$,
\end{enumerate}
then $f$ has a critical point lying on the line $\{z \in \Ha: \Re(z)=0\}$.
\end{proposition}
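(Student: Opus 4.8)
The plan is to treat all five hypotheses, and both target lines, by one monotonicity argument applied to the restriction of $f$ to a vertical geodesic. Fix $c\in\{0,\tfrac12\}$ — I take $c=\tfrac12$ for conclusions (i)--(iii) and $c=0$ for (i),(iv),(v) — and set $h(t):=f(c+it)$ for $t>0$. Because $f$ has real Fourier coefficients and $e^{2\pi i nc}\in\R$ for $c\in\{0,\tfrac12\}$, the function $h$ is real-valued, and $h'(t)=if'(c+it)$ is a real continuous function on $(0,\infty)$. Hence $f$ has a critical point (a point with $f'=0$) on the line $\{\Re z=c\}$ if and only if $h'$ has a zero in $(0,\infty)$; and since $h'$ is continuous, if it has no zero it has constant sign, so $h$ is strictly monotonic. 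Therefore it suffices to derive a contradiction from the assumption that $h$ is strictly monotonic on $(0,\infty)$.

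What I need is the behaviour of $h$ at the two ends of $(0,\infty)$, which correspond to the $\Gamma$-equivalent cusps $i\infty$ and $c$. As $t\to\infty$, the Fourier expansion $f=\sum_{n\ge0}a_nq^n$ gives $h(t)\to a_0$, and in the non-cuspidal cases $h(t)-a_0\sim e^{2\pi i nc}a_ne^{-2\pi nt}$; replacing $f$ by $-f$ if necessary we may assume $a_0>0$, and then the eventual sign of $h(t)-a_0$ is $\sgn(e^{2\pi i nc}a_n)$, which is $\epsilon_f'$ when $c=0$ and $\epsilon_f$ when $c=\tfrac12$ (recall $\epsilon_f=(-1)^n\epsilon_f'$). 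As $t\to0^+$, the point $c+it$ approaches the cusp $c$; using the transformation law there — $f(it)=(it)^{-k}f(i/t)$ for $c=0$, and $f(\tfrac12+it)=(-2it)^{-k}f(\tfrac{i}{4t}-\tfrac12)$ for $c=\tfrac12$, coming from the element $z\mapsto z/(2z+1)$ of $\Gamma$ which carries $i\infty$ to $\tfrac12$ — the argument of $f$ on the right-hand side has imaginary part tending to $\infty$, so its value tends to $a_0$. Consequently, in the non-cuspidal cases $h(t)\sim(-1)^{k/2}\cdot(\text{positive constant})\cdot a_0\,t^{-k}$, so $|h(t)|\to\infty$ with sign $(-1)^{k/2}$, while in the cuspidal case the exponential decay of $f$ at $i\infty$ beats the polynomial factor $t^{-k}$ and $h(t)\to0$.

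Granting this, the contradictions are immediate. In the cuspidal cases (i): a strictly monotonic $h$ with both end-limits $0$ is identically $0$, so $f$ vanishes on a line and hence $f\equiv0$ by analytic continuation, which is excluded. In the non-cuspidal cases: a strictly monotonic $h$ running from the finite value $a_0>0$ at $t=\infty$ to the infinite value $(-1)^{k/2}\cdot(+\infty)$ at $t=0^+$ must keep $h(t)-a_0$ of constant sign $(-1)^{k/2}$ on all of $(0,\infty)$; comparing with the eventual sign computed above forces $(-1)^{k/2}=\epsilon_f$ when $c=\tfrac12$ and $(-1)^{k/2}=\epsilon_f'$ when $c=0$. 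Since $(-1)^{k/2}=1$ exactly when $k\equiv0\pmod4$, this contradicts hypothesis (ii) (resp. (iv)) when $k\equiv0\pmod4$ and hypothesis (iii) (resp. (v)) when $k\equiv2\pmod4$. In every case $h$ is not monotonic, so $f'$ vanishes somewhere on the line, as claimed.

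The one place that needs genuine care rather than being purely formal is the cusp analysis at $t\to0^+$: I must check that conjugating the width-one cusp at $c$ to $i\infty$ really sends $c+it$ to a point whose imaginary part tends to $\infty$ (it is $\sim\tfrac1{4t}$ for $c=\tfrac12$ and $\tfrac1t$ for $c=0$), so that the constant term of the Fourier expansion genuinely controls the limit, and that the positive constant multiplying $(-1)^{k/2}$ (namely $1$ for $c=0$ and $2^{-k}$ for $c=\tfrac12$) does not affect any sign. It is also worth noting that the whole argument uses the full vertical line $\{\Re z=c,\ \Im z>0\}$, so no issue of remaining inside the fundamental domain arises.
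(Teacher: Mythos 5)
Your proof is correct, but it takes a genuinely different route from the paper's. The paper differentiates first and then transforms: it invokes the quasimodular transformation law of $Df$ (Lemma~\ref{lem_thetaf assymptotic formula, f:depth0}) to show that $Df\left(\tfrac12+iy\right)$ and $Df\left(\tfrac12+\tfrac{i}{4y}\right)$ are real numbers of opposite sign for small $y$ --- in the cuspidal case because the automorphy factor $i^{k+2}$ is $-1$ exactly when $k\equiv 0\pmod 4$, and in the non-cuspidal case because the depth-$1$ correction term $\tfrac{ck}{2\pi i}(cz+d)^{k+1}f(z)$ dominates, producing the factor $1-\epsilon_f\,\omega(1)$ --- and then applies the intermediate value theorem to $Df$ on the vertical segment. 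You instead transform first and then differentiate: you restrict $f$ itself to the line, use only the weight-$k$ modularity of $f$ (via $z\mapsto -1/z$, resp.\ $z\mapsto z/(2z+1)$) to read off the behaviour at the bottom end of the line, compare it with the Fourier expansion at the top end, and conclude by a Rolle/monotonicity argument. This buys you two things: you bypass entirely the $o(1)/\omega(1)$ bookkeeping of Lemmas~\ref{lem_calculation of v^-ng/v^-mf} and~\ref{lem_thetaf assymptotic formula, f:depth0}, and in case (i) you do not use $k\equiv 0\pmod 4$ at all, so you actually prove the stronger statement that every nonzero cusp form with real Fourier coefficients has critical points on both lines (which is exactly what Theorem~\ref{thm_cusp form 2 crit pt} needs, and which the paper obtains by combining the Rolle argument at the start of Section~\ref{sec_other remarks} with this proposition). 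What the paper's $Df$-level argument buys in exchange is that it extends to higher derivatives $D^jf$ via Lemma~\ref{lem_thetaf assymptotic formula, f:depth0}(c), which your $f$-level Rolle argument does not directly give. One cosmetic remark: in the cuspidal case, a strictly monotonic $h$ with both end-limits $0$ is outright impossible (its infimum and supremum would coincide), so you can reach the contradiction without the detour through ``$h\equiv 0$, hence $f\equiv 0$''; the conclusion is unaffected.
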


To prove this, we need the following lemmas.

\begin{lemma}\label{lem_calculation of v^-ng/v^-mf}
Let $f$ and $g$ be quasimodular forms and let $a:=v_{\infty}(f)$ and $b:=v_{\infty}(g)$. If $a-b>0$, then
\begin{align*}
    \lim_{y \to \infty} \frac{y^{-n}g(iy)}{y^{-m}f(iy)}&=\sgn(a_{\infty}(f))\sgn(a_{\infty}(g))\infty, \\
    \lim_{y \to \infty} \frac{y^{-n}g(\frac{1}{2}+iy)}{y^{-m}f(\frac{1}{2}+iy)}&=(-1)^{a-b}\sgn(a_{\infty}(f))\sgn(a_{\infty}(g))\infty,
\end{align*}
and if $a-b<0$, then 
\begin{align*}
    \lim_{y \to \infty} \frac{y^{-n}g(iy)}{y^{-m}f(iy)}=\lim_{y \to \infty} \frac{y^{-n}g(\frac{1}{2}+iy)}{y^{-m}f(\frac{1}{2}+iy)}=0.
\end{align*}
Lastly if $a-b=0$, then 
\begin{align*}
    \lim_{y \to \infty} \frac{y^{-n}g(iy)}{y^{-m}f(iy)}=\lim_{y \to \infty} \frac{y^{-n}g(\frac{1}{2}+iy)}{y^{-m}f(\frac{1}{2}+iy)}=
    \begin{cases} 0 & \text{ if } n>m, \\ \frac{a_{\infty}(g)}{a_{\infty}(f)} & \text{ if } n=m, \\ \sgn(a_{\infty}(f))\sgn(a_{\infty}(g))\infty & \text{ if } n\le m. \end{cases}
\end{align*}
\end{lemma}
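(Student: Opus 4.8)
The plan is to reduce the statement to the behaviour of the $q$-expansions at the cusp and then compare exponential against polynomial growth. Write $f(z)=\sum_{j\ge a}a_j(f)q^j$ and $g(z)=\sum_{j\ge b}a_j(g)q^j$ with $q=e^{2\pi iz}$, where $a=v_\infty(f)$, $b=v_\infty(g)$ and $a_\infty(f)=a_a(f)\neq 0$, $a_\infty(g)=a_b(g)\neq 0$. Since $f$ and $g$ are holomorphic at the cusp, these series converge for $\Im(z)$ large. First I would evaluate at $z=iy$ (so $q=e^{-2\pi y}$) and at $z=\tfrac12+iy$ (so $q=e^{\pi i}e^{-2\pi y}=-e^{-2\pi y}$), recording that as $y\to\infty$,
\[
f(iy)=a_\infty(f)e^{-2\pi ay}(1+O(e^{-2\pi y})),\qquad f(\tfrac12+iy)=(-1)^a a_\infty(f)e^{-2\pi ay}(1+O(e^{-2\pi y})),
\]
and likewise for $g$ with $(a,a_\infty(f))$ replaced by $(b,a_\infty(g))$.

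Next I would form the quotients. On the central line this gives
\[
\frac{y^{-n}g(iy)}{y^{-m}f(iy)}=\frac{a_\infty(g)}{a_\infty(f)}\,y^{m-n}\,e^{2\pi(a-b)y}\,(1+O(e^{-2\pi y})),
\]
and on $\Re(z)=\tfrac12$ the same expression multiplied by $(-1)^{a-b}=(-1)^{b-a}$. The conclusion then follows by reading off the limit in three regimes. If $a-b>0$, the factor $e^{2\pi(a-b)y}$ beats every fixed power $y^{m-n}$, so the limit is infinite with sign $\sgn(a_\infty(g)/a_\infty(f))=\sgn(a_\infty(f))\sgn(a_\infty(g))$; on $\Re(z)=\tfrac12$ the extra $(-1)^{a-b}$ multiplies this sign, which is exactly the stated value. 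If $a-b<0$, then $e^{2\pi(a-b)y}\to 0$ exponentially and swamps $y^{m-n}$, so both limits vanish. If $a-b=0$, the exponential disappears and $(-1)^{a-b}=1$, so both quotients equal $\tfrac{a_\infty(g)}{a_\infty(f)}y^{m-n}(1+o(1))$, which tends to $0$, to $\tfrac{a_\infty(g)}{a_\infty(f)}$, or to an infinity of sign $\sgn(a_\infty(f))\sgn(a_\infty(g))$ according as $n>m$, $n=m$, or $n<m$.

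I do not expect any genuine obstacle here: the argument is just the standard comparison of exponential and polynomial growth. The only delicate points are the sign bookkeeping --- carrying the factor $(-1)^{a-b}$ through the evaluation on $\Re(z)=\tfrac12$, and rewriting $\sgn(a_\infty(g)/a_\infty(f))$ as $\sgn(a_\infty(f))\sgn(a_\infty(g))$ --- together with the (immediate) observation that the error terms $O(e^{-2\pi y})$ are uniform because the $q$-expansions converge. I would also flag that the third branch of the displayed case distinction should read $n<m$ rather than $n\le m$.
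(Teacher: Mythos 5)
Your argument is correct and is essentially the paper's own proof: both extract the leading term of the $q$-expansion at $z=iy$ and $z=\tfrac12+iy$, pull out the factor $(-1)^{a-b}$ on the half-integer line, and reduce everything to the limit of $\tfrac{a_{\infty}(g)}{a_{\infty}(f)}\,e^{2\pi y(a-b)}y^{m-n}$. Your observation that the last branch of the case distinction should read $n<m$ rather than $n\le m$ is a correct (minor) fix to the statement.
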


\begin{proof}
Let $x=1/2$ and $\hat{f}\circ q=f$, $\hat{g}\circ q=g$. Note that 
\begin{align*}
    \lim_{y \to \infty} \frac{y^{-n}g(x+iy)}{y^{-m}f(x+iy)}&=\lim_{y \to \infty} \frac{y^{-n}(-e^{2\pi y})^{-b}(-e^{-2\pi y})^{-b} g(x+iy)}{y^{-m}(-e^{2\pi y})^{-a}(-e^{-2\pi y})^{-a} f(x+iy)} \\
    &=(-1)^{a-b}\lim_{y \to \infty}\frac{\frac{e^{-2\pi by}}{y^n}q(x+iy)^{-b}\hat g (q(x+iy))}{\frac{e^{-2\pi ay}}{y^m}q(x+iy)^{-a}\hat f (q(x+iy))} \\
    &=(-1)^{a-b}\frac{a_{\infty}(g)}{a_{\infty}(f)}\lim_{y \to \infty} \frac{e^{2\pi y(a-b)}}{y^{n-m}}.
\end{align*}
Similarly if $x=0$, then
\begin{align*}
    \lim_{y \to \infty} \frac{y^{-n}g(x+iy)}{y^{-m}f(x+iy)}=\frac{a_{\infty}(g)}{a_{\infty}(f)}\lim_{y \to \infty} \frac{e^{2\pi y(a-b)}}{y^{n-m}}.
\end{align*}
The assertion follows from the above equations immediately.
\end{proof}

\begin{lemma}\label{lem_thetaf assymptotic formula, f:depth0}
Let $f$ be a modular form of weight $k$.
\begin{enumerate}[\normalfont(a)]
    \item If $f$ is cuspidal, then for any positive integer $j$ we have
        \begin{align*}
        \begin{cases}
        D^j f\left(iy\right)=\left(\frac{i}{y}\right)^{k+2j}D^j f\left(\frac{i}{y}\right)\times \left(1+o(1) \right), \\
        D^j f\left(\frac{1}{2}+iy\right)=\left(\frac{i}{2y}\right)^{k+2j}D^j f\left(\frac{1}{2}+\frac{i}{4y}\right)\times \left(1+o(1) \right),
        \end{cases}
        \end{align*}
        as $y$ approaches $0.$
    \item If $f$ is non-cuspidal, then
    \begin{align*}
    \begin{cases}
    D f(iy)=\left(\frac{i}{y}\right)^{k+2}D f\left(\frac{i}{y}\right)\times \left(1-\epsilon_f'\omega(1) \right), \\
    D f(\frac{1}{2}+iy)=\left(\frac{i}{2y}\right)^{k+2}D f\left(\frac{1}{2}+\frac{i}{4y}\right)\times \left(1-\epsilon_f\omega(1) \right),
    \end{cases}
    \end{align*}
    as $y$ approaches $0.$
    \item If $f$ is non-cuspidal, then for $j>1$,
    \begin{align*}
    \begin{cases}
    D^j f(iy)=\left(\frac{i}{y}\right)^{k+2j}D^j f\left(\frac{i}{y}\right)\times \left(1+(-1)^{j-1}\epsilon_f'\omega(1) \right), \\
    D^j f(\frac{1}{2}+iy)=\left(\frac{i}{2y}\right)^{k+2j}D^j f\left(\frac{1}{2}+\frac{i}{4y}\right)\times \left(1+(-1)^{j-1}\epsilon_f\omega(1) \right),
    \end{cases}
    \end{align*}
    as $y$ approaches $0$.
\end{enumerate}
Here, little-$o$ and little-$\omega$  are the asymptotic bounds.
\end{lemma}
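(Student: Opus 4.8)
\textbf{Proof plan for Lemma~\ref{lem_thetaf assymptotic formula, f:depth0}.}

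The plan is to reduce everything to the modular transformation law under $S:\tau\mapsto -1/\tau$ together with the classical formula relating $D$ to the weight~$k$ slash action. Recall that for a modular form $f$ of weight $k$ one has, writing $f|_k S(\tau)=\tau^{-k}f(-1/\tau)=f(\tau)$, the identity
\begin{align*}
    (Df)\big|_{k+2} S = Df + \frac{k}{2\pi i}\,\tau^{-1} f ,
\end{align*}
and more generally, by Bol's identity / induction, $D^j$ of a modular form of weight $k$ transforms as a quasimodular form of weight $k+2j$ whose failure of modularity under $S$ is a sum of terms involving $\tau^{-1},\dots,\tau^{-j}$ times lower-order derivatives of $f$. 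Concretely, $D^j f(-1/\tau)=\tau^{k+2j}\bigl(D^j f(\tau)+\sum_{i=1}^{j} c_{i}\,\tau^{-i} D^{j-i}f(\tau)\bigr)$ for explicit constants $c_i$ with $c_1=\tfrac{j k}{2\pi i}$ (the precise $c_i$ will not matter for the leading-order statement). Setting $\tau=\tfrac{i}{y}$ (so $-1/\tau = iy$) gives $D^j f(iy)=\bigl(\tfrac{i}{y}\bigr)^{k+2j}\bigl(D^j f(\tfrac{i}{y})+\sum_{i\ge 1} c_i (i/y)^{-i} D^{j-i}f(\tfrac{i}{y})\bigr)$; for the $x=1/2$ line, use instead $\tau\mapsto ST^{-2}\tau$ or simply note $\tfrac12+iy \mapsto -1/(\tfrac12+iy)$ together with a translation, which after bookkeeping produces the factor $\bigl(\tfrac{i}{2y}\bigr)^{k+2j}$ and the shifted argument $\tfrac12+\tfrac{i}{4y}$ (since $-1/(\tfrac12+iy)\to 0$ and lands near $\tfrac12$ after translating by $1$, with the relevant scaling giving $\tfrac{i}{4y}$ in the limit $y\to 0$).

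The second step is to estimate the ratio of the correction terms to the main term $D^j f(\tfrac{i}{y})$ as $y\to 0$, equivalently as $\tfrac{i}{y}\to i\infty$. Here I would apply Lemma~\ref{lem_calculation of v^-ng/v^-mf}: each correction term is (a constant times) $(i/y)^{-i} D^{j-i}f(\tfrac{i}{y})$, and since $D^{j-i}f$ and $D^j f$ have the same order of vanishing at $\infty$ when $f$ is cuspidal (namely $v_\infty(f)\ge 1$, and $D$ preserves it), the factor $e^{2\pi y^{-1}(a-b)}=1$ with $a=b$, so the ratio behaves like $y^{i}\cdot\frac{a_\infty(D^{j-i}f)}{a_\infty(D^j f)}\to 0$; this yields the $(1+o(1))$ in part~(a). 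When $f$ is non-cuspidal, $D^j f$ has $v_\infty = 1$ for $j\ge 1$ (the constant term of $f$ is killed by $D$, the next coefficient survives) while $D^{j-1}f$ for $j=1$ has $v_\infty=0$ — so the $i=1$ correction term $c_1\tau^{-1}f(\tfrac iy)$ is of the \emph{same} exponential size but one power of $y$ smaller, and comparing first nonzero Fourier coefficients via Lemma~\ref{lem_calculation of v^-ng/v^-mf} shows this term dominates, is positive or negative according to $\sgn(a_0 a_n)$, and gives exactly the $(1-\epsilon_f'\,\omega(1))$ (resp. $(1-\epsilon_f\,\omega(1))$ on the half-line, the extra sign $(-1)^{a-b}=(-1)^{1-0}=-1$ being absorbed into the definition $\epsilon_f=(-1)^n\epsilon_f'$); for $j>1$ the relevant comparison is between $D^j f$ (order $1$) and $D^{j-1}f$ (also order $1$ now, since $j-1\ge 1$), producing instead a term one power of $y$ larger in the correction... so I must track carefully that for $j>1$ the dominant correction is the $i=1$ term $c_1 \tau^{-1}D^{j-1}f$, whose sign relative to the main term contributes the factor $(-1)^{j-1}\epsilon_f'$ (resp. $(-1)^{j-1}\epsilon_f$), the sign $(-1)^{j-1}$ coming from the power of $i$ in $c_1=\tfrac{jk}{2\pi i}$ interacting with $\tau^{-1}=(i/y)^{-1}$ and the ratio of leading coefficients $a_\infty(D^{j-1}f)/a_\infty(D^j f)$ (both equal up to the integer multiplier coming from $D$).

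The main obstacle I anticipate is not the transformation law itself but the careful sign/normalization bookkeeping: getting the half-line statements right requires pinning down how $-1/(\tfrac12+iy)$ approaches the line $\Re=\tfrac12$ and why the natural argument is $\tfrac12+\tfrac{i}{4y}$ rather than something else, and then tracking the factor $(-1)^{a-b}$ from Lemma~\ref{lem_calculation of v^-ng/v^-mf} through to see that it is exactly what converts $\epsilon_f'$ into $\epsilon_f$. I would handle this by first doing the $x=0$ case completely (where no translation is needed and $\tau=i/y$ directly), extracting the clean statements, and then redoing it with $\tau=\tfrac12+iy$, using the periodicity $f(\tau+1)=f(\tau)$ to replace $-1/(\tfrac12+iy)$ by $1-1/(\tfrac12+iy)=\tfrac12 + \tfrac{(1/2)^2+y^2 - 1/2}{(1/2)^2+y^2}\cdot(\text{something})$ and then checking that as $y\to0$ this tends to $\tfrac12$ with the imaginary part $\sim \tfrac{y}{1/4}=4y$ inverted, i.e. the relevant nearby point is $\tfrac12+\tfrac{i}{4y}$; the exponential factor governing which Fourier term dominates is then $e^{-2\pi\cdot(1/4y)^{-1}}$-type, consistent with replacing $y$ by $4y$ in the cuspidal decay, which is why the $o(1)$/$\omega(1)$ conclusions are identical in form to the $x=0$ case. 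Everything else is a direct application of Lemma~\ref{lem_calculation of v^-ng/v^-mf} to the explicitly written-out correction terms.
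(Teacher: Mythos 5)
Your overall strategy --- expand $D^j f(\gamma z)$ via the quasimodular transformation law (for $\gamma=S$ on the imaginary axis and for $\gamma=\begin{pmatrix}1&0\\2&1\end{pmatrix}$, i.e.\ the map sending $-\tfrac12+\tfrac{i}{4y}$ to $\tfrac12+iy$, on the half-line), and then control each correction term against the main term $D^j f$ using Lemma~\ref{lem_calculation of v^-ng/v^-mf} --- is exactly the paper's approach, and your treatment of parts (a) and (b) is sound: in (a) every correction involves a derivative of a cusp form with the same order of vanishing as $D^jf$, so each ratio is $o(1)$; in (b) the single correction $\tfrac{k}{2\pi i}\tau^{-1}f$ involves the non-cuspidal $f$ against the cuspidal $Df$, and Lemma~\ref{lem_calculation of v^-ng/v^-mf} (with the factor $(-1)^{a-b}$ converting $\epsilon_f'$ into $\epsilon_f$ on the half-line) gives the $\omega(1)$.

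There is, however, a genuine error in your part (c). You assert that for $j>1$ the dominant correction is the $i=1$ term $c_1\tau^{-1}D^{j-1}f(\tau)$. It is not: since $j-1\geq 1$, the form $D^{j-1}f$ is \emph{cuspidal} with $v_\infty(D^{j-1}f)=v_\infty(D^jf)=n$ (both equal to the index of the first non-constant Fourier coefficient of $f$), so by the $a-b=0$ case of Lemma~\ref{lem_calculation of v^-ng/v^-mf} the ratio $\tau^{-1}D^{j-1}f/D^jf$ tends to $0$; this term contributes only to the $o(1)$ and cannot produce an $\omega(1)$. The only source of an unbounded ratio is the coefficient of the expansion that contains the non-cuspidal form $f$ itself (for which $v_\infty(f)=0<n=v_\infty(D^jf)$, the $a-b>0$ case of the lemma). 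In the paper's proof this is located by computing $Q_m(D^jf)$ explicitly: all $Q_m$ with $m<j-1$ are cuspidal and are discarded as $o(1)$, and the dominant contribution comes from the $m=j-1$ and $m=j$ coefficients, which are the ones carrying $f$; the sign $(-1)^{j-1}$ then arises from the power $i^{-m}$ multiplying that dominant coefficient, not from $c_1$ or from the ratio $a_\infty(D^{j-1}f)/a_\infty(D^jf)$ as you suggest (that ratio is $1/n$, which carries no sign information). As written, your argument for (c) identifies a term that vanishes in the limit as the dominant one, so the $\omega(1)$ conclusion and the factor $(-1)^{j-1}$ are not derived; you would need to redo the bookkeeping isolating the unique $f$-carrying coefficient of the expansion and reading off the power of $i$ in front of it.
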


\begin{proof}
First, we prove (a) for $j=1$ and (b). Since $D f$ is a quasimodular form of weight~$k+2$ and depth~$1$, we have
\begin{align*}
    D f(\gamma z)=(cz+d)^{k+2}D f(z)+\frac{ck}{2\pi i}(cz+d)^{k+1}f(z)
\end{align*}
for any $\gamma \in \SL_2(\Z)$ and $z \in \Ha.$
If we put $\gamma=\begin{pmatrix}1 & 0 \\ 2 & 1 \end{pmatrix}$ and $z=-\frac{1}{2}+\frac{i}{4y},$ then
\begin{align*}
    Df\left(\frac{1}{2}+iy\right)&=\left(\frac{i}{2y}\right)^{k+2}Df\left(-\frac{1}{2}+\frac{i}{4y}\right)+\left(\frac{i}{2y}\right)^{k+1}\cdot 2 \cdot \frac{k}{2\pi i}f\left(-\frac{1}{2}+\frac{i}{4y}\right) \\
    &=\left(\frac{i}{2y}\right)^{k+2}Df\left(\frac{1}{2}+\frac{i}{4y}\right)\times \left(1-\frac{k}{2\pi }\frac{f(\frac{1}{2}+\frac{i}{4y})}{\frac{1}{4y}D f(\frac{1}{2}+\frac{i}{4y})} \right).
\end{align*}
Recall that $D f$ is always cuspidal, so $v_{\infty}(Df)>0$. 

Suppose $f$ is cuspidal. Since $D =q\frac{d}{dq},$ it is clear that $v_{\infty}(f)=v_{\infty}(Df).$ By Lemma \ref{lem_calculation of v^-ng/v^-mf} we have
\begin{align*}
    \lim_{y \to 0}\frac{f(\frac{1}{2}+\frac{i}{4y})}{\frac{1}{4y}D f(\frac{1}{2}+\frac{i}{4y})}=0.
\end{align*}
This proves the second equation of (a) for $j=1$. The first equation of (a) for $j=1$ is obtained by applying the same argument for $\gamma=\begin{pmatrix} 0 & -1 \\ 1 & 0 \end{pmatrix}$, $z=\frac{i}{y}$. 


Suppose $f$ is non-cuspidal. Note that in this case, $a_{\infty}(f)a_{\infty}(D f)=na_0 a_n$, where $a_0$ and $a_n$ are the first two non-zero Fourier coefficients of $f$. According to Lemma~\ref{lem_calculation of v^-ng/v^-mf} again, we have
\begin{align*}
    \lim_{y \to 0}\frac{f(\frac{1}{2}+\frac{i}{4y})}{\frac{1}{4y}D f(\frac{1}{2}+\frac{i}{4y})}=(-1)^{v_{\infty}(Df)-v_{\infty}(f)}\sgn(a_{\infty}(f))\sgn(a_{\infty}(D f))\infty=\epsilon_f \infty,
\end{align*}
which implies the second equation of (b), and similarly the first equation is derived as well.

Next, we'll consider (a) for $j>1$ and (c). Among these, we only prove the second equation in (c), since the proof of (a) for $j>1$ and the first equation in (c) can be obtained by the similar argument. 

Recall that for an arbitrary quasimodular form $g$ of weight $k$ and depth $\ell$, we have

\begin{align*}
    g(\gamma z)=\sum_{m=0}^{\ell}c^j(cz+d)^{k-j}Q_m(g)(z)
\end{align*}
for arbitrary $\gamma \in \SL_2(\Z)$. Thus for any $\gamma~\in~\SL_2(\Z)$,
\begin{align*}
    D^j f(\gamma z)=\sum_{m=0}^j c^m(cz+d)^{k+2j-m}Q_m(D^j f)(z),
\end{align*}
and in particular
\begin{align}\label{eqn_theat j f quasimodularity, Re 1/2}
    D^j f\left(\frac{1}{2}+iy\right)&=\sum_{m=0}^j 2^m\left(\frac{i}{2y}\right)^{k+2j-m}Q_m(D^j f)\left(\frac{1}{2}+\frac{i}{4y}\right) \notag \\
    &=\left(\frac{i}{2y}\right)^{k+2j}D^j f\left(\frac{1}{2}+\frac{i}{4y}\right)\times \left(1+\sum_{m=1}^j 2^m\left(\frac{i}{2y}\right)^{-m}\frac{Q_m(D^j f)\left(\frac{1}{2}+\frac{i}{4y}\right)}{D^j f\left(\frac{1}{2}+\frac{i}{4y}\right)}\right).
\end{align}
Referring to \cite[Theorem 3.5]{Roy12}, it can be verified that $Q_0(D^j f)=D^j f$, $Q_j(D^j f)=\frac{j!}{(2\pi i)^j}{k\choose j}f$, and for $1 \leq m \leq j-1$, $Q_m(D^j f)$ is the $\frac{m!}{(2\pi i)^m} {k\choose m}$ times $\Z$-linear combinations of $D^{j-m-1}f$ and $D^{j-m}f$. Thus for $m<j-1$, a function $Q_m(D^j f)$ is cuspidal. Furthermore, $Q_{j-1}(D^j f)$ can be shown inductively to be equal to $\frac{(j-1)!}{(2\pi i)^{j-1}} {k\choose j-1}(f+(j-1)D f)$. 

Therefore, by applying Lemma \ref{lem_calculation of v^-ng/v^-mf}, we conclude that the equation \eqref{eqn_theat j f quasimodularity, Re 1/2} can be expressed as
\begin{align*}
    &\left(\frac{i}{2y}\right)^{k+2j}D^j f\left(\frac{1}{2}+\frac{i}{4y}\right)\times \left(1+\sum_{m=j-1}^j 2^m\left(\frac{i}{2y}\right)^{-m}\frac{Q_m(D^j f)\left(\frac{1}{2}+\frac{i}{4y}\right)}{D^j f\left(\frac{1}{2}+\frac{i}{4y}\right)}+o(1)\right) \\
    &=\left(\frac{i}{2y}\right)^{k+2j}D^j f\left(\frac{1}{2}+\frac{i}{4y}\right)\times \left(1+\left(\frac{(j-1)!}{i^{j-1}(2\pi i)^{j-1}}{k\choose j-1}+\frac{j!}{i^j(2\pi i)^j}{k\choose j}\cdot 4y\right)\frac{f\left(\frac{1}{2}+\frac{i}{4y}\right)}{\left(\frac{1}{4y}\right)^{j-1}D^j f\left(\frac{1}{2}+\frac{i}{4y}\right)}+o(1) \right) \\
    &=\left(\frac{i}{2y}\right)^{k+2j}D^j f\left(\frac{1}{2}+\frac{i}{4y}\right)\times (1+(-1)^{j-1}\epsilon_f \omega(1)),
\end{align*}
as $y$ approaches $0$.
\end{proof}

\begin{proof}[Proof of Proposition \ref{prop_crit.of.modularform.lying.onthe.central}]
 As $f$ has real Fourier coefficients, so does its derivative $Df$. If $f$ is cuspidal (resp. non-cuspidal), by Lemma \ref{lem_thetaf assymptotic formula, f:depth0}, we have $D f(\frac{1}{2}+iy)=\left(\frac{i}{2y}\right)^{k+2} D f(\frac{1}{2}+\frac{i}{4y})\times (1+o(1))$ (resp. $\left(\frac{i}{2y}\right)^{k+2}D f(\frac{1}{2}+\frac{i}{4y})\times \left(1-\epsilon_f\omega(1) \right)$), so $D f(\frac{1}{2}+iy)$ and $D f(\frac{1}{2}+\frac{i}{4y})$ are real numbers with opposite signs for some $y \gg 0.$

It immediately follows that there exists $y_0 \in (\frac{1}{4y},y)$ such that $Df(\frac{1}{2}+iy_0)=0$. Similarly, one can verify the assertion for $\{z \in \mathbb H: \Re(z)=0\}$.
\end{proof}

Unless $f$ belongs to $DS_{k-2,\R}$, the existence of a real zero holds under certain weight conditions. Before presenting this, we introduce the following lemma.

\begin{lemma}
Let $f$ be a cuspidal quasimodular form. As $y \gg 0$, the sign of $f\left(\frac{1}{2}+iy\right)$ is $(-1)^{v_{\infty}(f)}\sgn(a_{\infty}(f))$, and the sign of $f(iy)$ is $\sgn(a_{\infty}(f))$. 
\end{lemma}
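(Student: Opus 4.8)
The plan is to reduce everything to the leading term of the $q$-expansion of $f$ at the cusp and to track the argument of $q$ along the two vertical rays. First, I would record that any quasimodular form $f$ of weight $k$ is invariant under $z \mapsto z+1$: taking $\gamma = T = \begin{pmatrix} 1 & 1 \\ 0 & 1\end{pmatrix}$ in condition~(i) of the definition, one has $c = 0$, hence $X(T) \equiv 0$ and $f|_k T = Q_0(f) = f$. Thus $f$ descends to a holomorphic function $\hat f$ on the punctured disc $\{0 < |q| < 1\}$ via $q = e^{2\pi i z}$, and the moderate growth hypothesis (equivalently, holomorphy at the cusp) makes $q = 0$ a removable singularity, so $\hat f$ extends to a holomorphic function on $\{|q| < 1\}$ with a convergent power series expansion there.

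Next I would use cuspidality. By definition $n := v_\infty(f) \geq 1$, so $\hat f(q) = a_n q^n + a_{n+1}q^{n+1} + \cdots$ with $a_n = a_\infty(f) \neq 0$, and consequently $\hat f(q)/q^n \to a_n$ as $q \to 0$. Writing $z = x + iy$ we have $q = e^{2\pi i x} e^{-2\pi y}$, so $|q| = e^{-2\pi y} \to 0$ as $y \to \infty$; hence
\begin{align*}
    f(x+iy) = q^n\bigl(a_n + o(1)\bigr) \qquad (y \to \infty),
\end{align*}
uniformly in $x$.

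Finally I would specialize $x$. For $x = 0$ we get $q = e^{-2\pi y} > 0$, so $q^n > 0$ and $f(iy) = e^{-2\pi n y}(a_n + o(1))$, which for $y \gg 0$ has sign $\sgn(a_n) = \sgn(a_\infty(f))$. For $x = 1/2$ we get $q = -e^{-2\pi y}$, so $q^n = (-1)^n e^{-2\pi n y}$ and $f\bigl(\tfrac12 + iy\bigr) = (-1)^n e^{-2\pi n y}(a_n + o(1))$, which for $y \gg 0$ has sign $(-1)^n \sgn(a_\infty(f)) = (-1)^{v_\infty(f)}\sgn(a_\infty(f))$. This completes the argument.

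There is essentially no serious obstacle here; the only point that deserves care is the justification that the $q$-series of a quasimodular form is legitimate, i.e. that $\hat f$ is holomorphic at $q = 0$, which is exactly the content of the growth/holomorphy-at-cusps clause in the definition and is standard (it also underlies Lemma~\ref{lem_calculation of v^-ng/v^-mf}, which one could alternatively invoke to phrase the limit arguments). Everything else is the elementary observation that $\arg q$ equals $0$ on $\Re(z)=0$ and $\pi$ on $\Re(z)=1/2$, so that $q^n$ carries the sign $(-1)^{nx/(1/2)}$ accordingly.
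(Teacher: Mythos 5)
Your argument is correct and is exactly the paper's approach: the paper's proof is the one-line observation that $f(z)=q^{v_{\infty}(f)}\bigl(a_{v_{\infty}(f)}+O(q)\bigr)$, from which the signs follow since $q=e^{-2\pi y}>0$ on $\Re(z)=0$ and $q=-e^{-2\pi y}<0$ on $\Re(z)=1/2$. Your write-up merely supplies the routine justifications (periodicity, holomorphy at the cusp) that the paper leaves implicit.
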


\begin{proof}
It follows from $f(z)=\sum_{n=v_{\infty}(f)}^{\infty}a_n q^n=q^{v_{\infty}(f)}\left(a_{v_{\infty}(f)}+O(q)\right)$ immediately.
\end{proof}

\begin{proposition}
Let $f$ be a weight $k$ quasimodular form. 
\begin{enumerate}[\normalfont(a)]
    \item If $k \equiv 2 \pmod{4}$ and $f$ is non-cuspidal, then $f$ has at least two zeros on $\{z \in \Ha: \Re(z)=0 \text{ or } 1/2\}$, one of them lying on $\{z \in \Ha: \Re(z)=0\}$ and one of them on $\{z \in \Ha: \Re(z)= 1/2\}$.
    \item If $k \equiv 2 \pmod{4}$ and $f=f_0+Df_1 \in S_{k,\R} \oplus D  M_{k-2,\R}$ with $v_{\infty}(f) \leq v_{\infty}(f_1)$, then $f$ has a zero on the line~$\{z \in \Ha: \Re(z)= 1/2\}$.
    \item If the depth of $f$ is not larger than $1$ and $k \equiv 6,10 \pmod{12}$, then $f$ has a zero on the line~$\{z \in \Ha: \Re(z)= 1/2\}$.
    \item If the depth of $f$ is not larger than $2$ and $k \equiv 6 \pmod{12}$, then $f$ has a zero on the line $\{z \in \Ha: \Re(z)= 1/2\}$.
\end{enumerate}
\end{proposition}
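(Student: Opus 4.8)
The plan is to prove all four statements by applying the intermediate value theorem to the real–valued function $y\mapsto f(x_0+iy)$ on $(0,\infty)$, with $x_0\in\{0,\tfrac12\}$, comparing the sign of $f$ as $y\to\infty$ with its sign as $y\to 0^+$; accordingly all forms are taken with real Fourier coefficients. Near the cusp the $q$-expansion gives, for $y\gg 0$, that $f(iy)$ has sign $\sgn(a_\infty(f))$ and $f(\tfrac12+iy)$ has sign $(-1)^{v_\infty(f)}\sgn(a_\infty(f))$ (this is the lemma preceding the proposition when $f$ is cuspidal, and is immediate otherwise). For the other end we use the quasimodular transformation law: if $f$ has weight $k$ and depth $p$ then $f(\gamma z)=\sum_{m=0}^{p}c^{m}(cz+d)^{k-m}Q_m(f)(z)$ for $\gamma=\left(\begin{smallmatrix}a&b\\c&d\end{smallmatrix}\right)\in\SL_2(\Z)$. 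Taking $\gamma=\left(\begin{smallmatrix}1&0\\2&1\end{smallmatrix}\right)$ and $z=-\tfrac12+\tfrac{i}{4y}$ (so $\gamma z=\tfrac12+iy$) and using the $1$-periodicity of each $Q_m(f)$ gives
\begin{equation*}
f\!\left(\tfrac12+iy\right)=\left(\tfrac{i}{2y}\right)^{k} f\!\left(\tfrac12+\tfrac{i}{4y}\right)\left(1+\sum_{m=1}^{p}2^{m}\left(\tfrac{i}{2y}\right)^{-m}\frac{Q_m(f)\!\left(\tfrac12+\tfrac{i}{4y}\right)}{f\!\left(\tfrac12+\tfrac{i}{4y}\right)}\right),
\end{equation*}
and likewise $\gamma=S$, $z=\tfrac{i}{y}$ on $\{\Re z=0\}$. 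The leading factor carries the sign $i^{k}=(-1)^{k/2}$, which is $-1$ exactly when $k\equiv 2\pmod 4$; hence, if $k\equiv 2\pmod 4$ and the bracketed factor is eventually positive as $y\to 0^+$, then $f(\tfrac12+iy)$ changes sign and we get a zero on $\{\Re z=\tfrac12\}$ (and similarly on $\{\Re z=0\}$).

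\textbf{Parts (a) and (b).} For (a), since $f$ is non-cuspidal, $f(\tfrac12+\tfrac{i}{4y})\to a_\infty(f)\neq 0$ as $y\to 0$, while every $Q_m(f)$ is holomorphic at the cusp, hence bounded, and $(\tfrac{i}{2y})^{-m}\to 0$; so the bracket tends to $1$ and the sign-change conclusion holds on both lines. For (b), $f=f_0+Df_1$ has depth $\le 1$, and $Q_1(Dg)=\tfrac{\operatorname{wt}(g)}{2\pi i}g$ together with $Q_1(f_0)=0$ gives $Q_1(f)=\tfrac{k-2}{2\pi i}f_1$, so the single correction is a constant multiple of $y\cdot f_1(\tfrac12+\tfrac{i}{4y})/f(\tfrac12+\tfrac{i}{4y})$; comparing leading Fourier terms and using $v_\infty(f)\le v_\infty(f_1)$ shows this ratio stays bounded (indeed $O(e^{-c/y})$ with $c\ge 0$) as $y\to 0$, so again the bracket tends to $1$ and we conclude on $\{\Re z=\tfrac12\}$.

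\textbf{Parts (c) and (d).} Both $k\equiv 6$ and $k\equiv 10\pmod{12}$ force $k\equiv 2\pmod 4$, so if $f$ is non-cuspidal we are done by (a). If $f$ is cuspidal and $k\equiv 10\pmod{12}$, write $f=f_0+f_1E_2$ with $f_0\in M_k$, $f_1\in M_{k-2}$; since $k\equiv 4\pmod 6$ and $k-2\equiv 2\pmod 6$, the classical valence formula forces $f_0(\rho)=f_1(\rho)=0$, hence $f(\rho)=0$ with $\rho\in\{\Re z=\tfrac12\}$. If $f$ is cuspidal and $k\equiv 6\pmod{12}$, write $f=f_0+Df_1\in S_k\oplus DM_{k-2}$; when $v_\infty(f)\le v_\infty(f_1)$ this is the hypothesis of (b). In the complementary case the leading Fourier coefficients of $f_0$ and $Df_1$ cancel, so $v_\infty(f)>v:=v_\infty(f_1)$ and $a_v(f_0)=-v\,a_v(f_1)$; one then reruns the transformation argument with the correction term now dominating the $1$ in the bracket, and shows that the sign of this dominant term still produces a sign change, exploiting this cancellation relation together with the vanishing orders at $\infty$, $i$, $\rho$ forced by the weight congruence. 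Part (d) is treated the same way at depth $\le 2$: now $Q_2(f)$ is a constant multiple of the weight-$(k-4)$ component of $f$ and $Q_1(f)$ a combination of the weight-$(k-2)$ component and of $g_2,Dg_2$, and both correction terms are controlled via the corresponding vanishing orders, the cancellation cases again being the delicate ones.

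\textbf{Main obstacle.} The crux is precisely these cancellation cases in (c) and (d): when the crude bound on the correction terms breaks down one must pin down the sign of the \emph{dominant} correction, which requires careful bookkeeping of the leading Fourier coefficients of the summands $f_0,f_1$ (resp.\ $g_0,g_1,g_2$) and of the vanishing orders imposed by $k\equiv 6\pmod{12}$. An alternative, possibly cleaner route would be to combine the depth-$1$ valence formula (Theorem~\ref{thm_valence formula}) with the observation that the non-real zeros of $f$ come in pairs under $z\mapsto 1-\bar z$, so that the number of zeros on $\{\Re z=\tfrac12\}$ is determined modulo an even contribution.
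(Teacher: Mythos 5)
Your parts (a) and (b) follow the paper's argument essentially verbatim (transformation law on the two vertical lines, correction terms $o(1)$ because $v_\infty(f)\le v_\infty(Q_j(f))$, sign flip from $i^k=-1$), and your observation that $k\equiv 10\pmod{12}$ forces $f(\rho)=0$ for any depth $\le 1$ form is a valid shortcut for that subcase of (c). The problem is the remaining half of (c) and all of (d): you explicitly defer the case $k\equiv 6\pmod{12}$, $f$ cuspidal with $v_\infty(f)>v_\infty(f_1)$, saying one "reruns the transformation argument with the correction term now dominating" and that the sign "still produces a sign change." That is exactly the step that does not go through. When the correction dominates, the limiting sign of $f(\tfrac12+iy)$ as $y\to 0$ becomes $(-1)^{v_\infty(f_1)}\sgn(a_\infty(f_1))$, while the sign as $y\to\infty$ is $(-1)^{v_\infty(f)}\sgn(a_\infty(f))$; nothing in the hypotheses forces these to be opposite, and no amount of bookkeeping of leading coefficients will manufacture the needed inequality. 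So the proposal has a genuine gap precisely at its self-identified "crux."

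The paper avoids the endpoint asymptotics entirely in (c) and (d) by exploiting the elliptic point $\rho=\tfrac12+i\tfrac{\sqrt3}{2}$. Writing $f=f_0+Df_1$ with $f_1\in M_{k-2,\R}$, the correction term in
\begin{align*}
    f\left(\tfrac{1}{2}+iy\right)=\left(\tfrac{i}{2y}\right)^{k} f\left(\tfrac{1}{2}+\tfrac{i}{4y}\right)\left(1-\tfrac{k}{2\pi}\,\tfrac{f_1}{\frac{1}{4y}f}\left(\tfrac{1}{2}+\tfrac{i}{4y}\right)\right)
\end{align*}
is proportional to $f_1\left(\tfrac12+\tfrac{i}{4y}\right)$, and for $k\equiv 6,10\pmod{12}$ the classical valence formula forces $v_\rho(f_1)\ge 1$ because $k-2\equiv 4,8\pmod{12}$. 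Choosing $y=\tfrac{1}{2\sqrt3}$ so that $\tfrac12+\tfrac{i}{4y}=\rho$ kills the correction term exactly, yielding $f\left(\tfrac12+\tfrac{i}{2\sqrt3}\right)=i^k 3^{k/2}f(\rho)=-3^{k/2}f(\rho)$; either $f(\rho)=0$ or the two real values have opposite signs and the intermediate value theorem gives a zero between the heights $\tfrac{1}{2\sqrt3}$ and $\tfrac{\sqrt3}{2}$. No case split on cuspidality or on cancellation of leading coefficients is needed. Part (d) works the same way: for $k\equiv 6\pmod{12}$ the components $f_1$ (weight $\equiv 4\bmod 6$) and $f_2$ (weight $\equiv 2\bmod 6$, hence $v_\rho(f_2)\ge 2$) make $Q_1(f)$ and $Q_2(f)$ vanish at $\rho$ as well. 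If you want to salvage your route, you would need to supply this idea (or an equally effective substitute) for the cancellation case; as written, the proof is incomplete.
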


\begin{proof}
Let $\ell$ be the depth of $f$. Recall that
\begin{align*}
    f\left(\frac{1}{2}+iy\right)=\left(\frac{i}{2y}\right)^k f\left(\frac{1}{2}+\frac{i}{4y}\right) \times \left(1+\sum_{j=1}^{\ell} \frac{1}{i^j}\frac{Q_j(f)}{\left(\frac{1}{4y}\right)^j f}\left(\frac{1}{2}+\frac{i}{4y}\right) \right),
\end{align*}
and similarly,
\begin{align*}
     f\left(iy\right)=\left(\frac{i}{y}\right)^k f\left(\frac{i}{y}\right) \times \left(1+\sum_{j=1}^{\ell} \frac{1}{i^j}\frac{Q_j(f)}{\left(\frac{1}{y}\right)^j f}\left(\frac{i}{y}\right) \right).
\end{align*}
Since $f$ is non-cuspidal, we have $0=v_{\infty}(f) \leq v_{\infty}(Q_j(f))$ for any $j \in \{1,2,\ldots,\ell\}$ so that both of  $\frac{Q_j(f)}{\left(\frac{1}{4y}\right)^j f}\left(\frac{1}{2}+\frac{i}{4y}\right)$ and $\frac{Q_j(f)}{\left(\frac{1}{y}\right)^j f}\left(\frac{i}{y}\right)$ are $o(1)$ as $y$ approaches $0$ by Lemma \ref{lem_calculation of v^-ng/v^-mf}. In particular, the signs of $f\left(\frac{1}{2}+\frac{i}{4y}\right)$ and of $f\left(\frac{i}{y}\right)$ are opposite to the sign of $f(\infty)$. This completes the proof of~(a), and the same arguments can be applied to prove (b).

To prove (c), write $f=f_0+D f_1$ for $f_0 \in M_{k,\R}$, $f_1 \in M_{k-2,\R}$. We have
\begin{align*}
    f\left(\frac{1}{2}+iy\right)=\left(\frac{i}{2y}\right)^{k} f\left(\frac{1}{2}+\frac{i}{4y}\right)\times \left(1-\frac{k}{2\pi}\frac{f_1}{\frac{1}{4y}f}\left(\frac{1}{2}+\frac{i}{4y}\right)\right).
\end{align*}
Since $k-2 \equiv 4,8 \pmod{12}$, we have $f_1\left(\frac{1}{2}+i\frac{\sqrt 3}{2}\right)=0$ by the valence formula. If we take $y=\frac{i}{2\sqrt3}$, then we get
\begin{align*}
    f\left(\frac{1}{2}+\frac{i}{2\sqrt 3}\right)=-\left(\frac{5}{2}\right)^k f\left(\frac{1}{2}+i\frac{\sqrt 3}{2}\right),
\end{align*}
hence the desired result follows. A similar argument also proves (d).
\end{proof}

It is natural to question whether there is a quasimodular form with no real zeros on lines $\{z \in \mathbb H : \Re(z)=0\}$ or $\{z \in \mathbb H : \Re(z)=1/2\}$, particularly when the depth is greater than $0$. In the case of depth $0$, the modular discriminant function $\Delta$ is one of the standard examples of such (quasi)modular forms, but the answer is unclear for higher depths.

For quasimodular forms of depth $1$, it is clear that the derivative of the Eisenstein series does not vanish on the line $\{z \in \Ha: \Re(z)=0\}$. Similarly, we will prove that there exist quasimodular forms of depth $1$ that do not vanish on the line $\{z \in \Ha: \Re(z)=1/2\}$. Remarkably, these forms are the derivatives of modular forms, and the corresponding antiderivative functions also lack real zeros on the line~$\{z \in \Ha: \Re(z)=1/2\}$.

\begin{proposition}\label{thm_monotonedecreasing noncuspidal}
For $k \equiv 0 \pmod{12}$, let $f$ be a non-cuspidal modular form of weight $k$ given by
\begin{align*}
    f(z)=b_1 \Delta^{\frac{k}{12}}(z)+b_k\Delta(z)E_4^{\frac{k}{4}-3}(z)+E_4^{\frac{k}{4}}(z).
\end{align*}
For sufficiently large $k$, there exists a constant $B>0$ depending on $b_k$ such that if $(-1)^{\frac{k}{12}}b_1>B$ then $f$ has no zero lying on the line $\{z \in \Ha: \Re(z)= 1/2\} \cup \{\infty\}$. Furthermore, if we assume $b_k<-60k$, then we can take $B$ for which $f\left(\frac{1}{2}+iy\right)$ is a monotone decreasing function for $y \in (0,\infty)$. 
\end{proposition}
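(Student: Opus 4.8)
The plan is to realise $f$ as a polynomial in $j$. Dividing by $\Delta^{m}$ with $m:=k/12$ and using $j=E_4^3/\Delta$ gives
\[
f=\Delta^{m}\,P(j),\qquad P(X)=X^{m}+b_kX^{m-1}+b_1 .
\]
On the line $L:=\{z\in\Ha:\Re(z)=1/2\}$ two facts do all the work: (i) $\Delta$ is real and $\Delta(z)<0$ on $L$, since there $q=-e^{-2\pi\Im z}$, which makes every Euler factor $1-q^n$ positive while the prefactor $q$ is negative; (ii) $j$ is real on $L$ with $j(z)\le 1728$, because $j-1728=E_6^2/\Delta$ and $\Delta<0$. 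For the no-zeros statement it then suffices to choose $B=B(k,b_k)$ so large that $(-1)^{m}b_1>B$ forces $P$ to have no real zero in $(-\infty,1728]$ and to be of sign $(-1)^m$ there: on the compact interval $[-C,1728]$ with $C:=|b_k|+1$ one has $|P(X)-b_1|\le C^{m-1}(C+|b_k|)$, so any $|b_1|$ exceeding twice this bound pins the sign of $P$ on $[-C,1728]$ to $\operatorname{sgn}(b_1)$, while for $X<-C$ the term $X^{m}$ dominates and has sign $(-1)^m=\operatorname{sgn}(b_1)$. Then $f(z)=\Delta^{m}(z)P(j(z))$ is a product of two real numbers each of sign $(-1)^m$, hence $f(z)>0$ for all $z\in L$, and $f(\infty)=1\ne0$.

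For the monotonicity, write $g(y):=f(\tfrac12+iy)$; since $g>0$ by the first part and $g'(y)=-2\pi\,Df(\tfrac12+iy)$, the claim is equivalent to $Df(\tfrac12+iy)>0$ for all $y>0$. The key algebraic input is the Serre‑derivative identity: a direct computation gives
\[
\vartheta f=Df-\tfrac{k}{12}E_2f=-E_4^{\,3m-4}E_6\,g_1,\qquad g_1:=mE_4^3+b_k(m-1)\Delta=\Delta\bigl(mj+b_k(m-1)\bigr),
\]
so for $k$ large $g_1>0$ on $L$ (the product of the two negative quantities $\Delta$ and $mj+b_k(m-1)=mj-|b_k|(m-1)\le 1728m-720m(m-1)<0$). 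Feeding this back,
\[
Df=3mE_4^{\,3m-4}\,DE_4\,(E_4^3+b_k\Delta)+b_kE_4^{\,3m-4}E_6\Delta+mb_1E_2\Delta^{m}\quad\text{on }\Ha,
\]
where on $L$ one has $E_4^3+b_k\Delta>0$ (since $|b_k|>1728\ge|j|$ wherever $E_4<0$) and $mb_1E_2\Delta^{m}=m|b_1|\,|\Delta|^{m}E_2$. Now partition $(0,\infty)$ at the values $y^\ast$ (the unique zero of $E_2$ on $L$), $\tfrac1{2\sqrt3}<\tfrac12<\tfrac{\sqrt3}2$ (the $\Gamma$‑images on $L$ of $\rho$ and $i$), and read off, from the $q$‑expansions together with the transformation under $\bigl(\begin{smallmatrix}1&0\\2&1\end{smallmatrix}\bigr)$, the signs of $E_2,E_4,E_6$ on each subinterval, and the sign of $DE_4$ from the fact (Proposition~\ref{prop_number of zeros of f_k,0} with $k=4$) that $E_4'$ has no zero in $F$ together with a direct sign check on the two arcs of $L$ outside $F$.

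With these signs in hand, on the subintervals where $E_2>0$ the positive term $mb_1E_2\Delta^{m}$ dominates the other two once $B$ is large (near the endpoint $y^\ast$ one uses instead that $Df(y^\ast)=-E_4^{3m-4}E_6g_1(y^\ast)>0$, by continuity), and on $(\tfrac{\sqrt3}2,\infty)$ — where all three Eisenstein series are positive and the first two terms of $Df$ nearly cancel — positivity follows for $y$ beyond a $k$‑dependent threshold $Y_1$ from the leading Fourier coefficient $60k+b_k<0$ (so $Df(\tfrac12+iy)=-(60k+b_k)e^{-2\pi y}+O(e^{-4\pi y})>0$ for $y\ge Y_1$) and for $y\in[\tfrac{\sqrt3}2,Y_1]$ again from the dominance of the positive $b_1$‑term. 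The delicate range is $(0,y^\ast)$, where $E_2<0$ so the $b_1$‑term has the wrong sign: here one keeps $B$ below a second threshold (of order $j(\tfrac12+iy^\ast)^{m}$) ensuring $|b_1\Delta^m|\ll E_4^{3m}$ on $(0,y^\ast)$, and then shows $3mE_4^{3m-4}DE_4(E_4^3+b_k\Delta)$ outweighs the two negative terms using $DE_4>0$ on $(0,y^\ast)$, the identity $E_2E_4-E_6=3DE_4$, and $E_4|E_2|<|E_6|$ there (a gap which near $y=0$ degrades only by $O(e^{-\pi/2y})$, comfortably beaten by the $O(y)$ term). I expect this last region to be the main obstacle: it is where the admissible range of $B$ becomes genuinely two‑sided, and turning the near‑equalities $E_4|E_2|\approx|E_6|$ as $y\to0$ and $\tfrac{k}{12}E_2f\approx E_4^{3m-4}E_6g_1$ as $y\to\infty$ into strict inequalities valid uniformly on $(0,\infty)$ requires careful tail estimates on the relevant $q$‑series rather than leading‑order asymptotics; the remaining regions are sign bookkeeping plus the polynomial estimate fixing $B$.
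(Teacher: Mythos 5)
Your treatment of the first assertion is correct and takes a genuinely different, more elementary route than the paper. The factorization $f=\Delta^{m}P(j)$ with $P(X)=X^{m}+b_kX^{m-1}+b_1$, combined with the two facts that $\Delta<0$ and $j\le 1728$ on the line $\Re(z)=1/2$, reduces non-vanishing to pinning the sign of a real trinomial on $(-\infty,1728]$, which your choice of $B$ does (modulo the cosmetic point that the bound $C^{m-1}(C+|b_k|)$ should use $\max(C,1728)$ in place of $C$ when $|b_k|+1<1728$). The paper instead works with the Fourier expansion: it splits $f$ into a head, a middle block of coefficients $a_n(f)$, $k/12\le n\le N_k$, whose signs alternate (via the Heim--Neuhauser results on D'Arcais polynomials applied to $\tau_{k/12}$), and a tail controlled by Ramanujan--Petersson bounds; the alternation together with $q=-e^{-2\pi y}$ makes each middle term positive and dominant. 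Your argument avoids all of that machinery and does not need $k$ large for this part.

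The second assertion is where the proposal has a genuine gap, and you have correctly sensed where it sits. First, as you concede, the region $0<y<y^{\ast}$ where $E_2<0$ on the line is left unproven, and the chain of sign claims there ($DE_4>0$, $E_4|E_2|<|E_6|$, the uniform tail estimates) is not established. Second, and more seriously, the device you introduce to handle that region --- keeping $B$ \emph{below} a second threshold --- changes the statement being proved. Read in parallel with the first clause, the proposition asserts monotonicity for \emph{all} $b_1$ with $(-1)^{k/12}b_1>B$, and no one-sided $B$ can achieve this: writing $Df=\tfrac{k}{12}E_2f+\vartheta f$ with $\vartheta f=-E_4^{3m-4}E_6\,g_1$ independent of $b_1$, at any fixed $y_{\ast}$ with $E_2\left(\tfrac12+iy_{\ast}\right)<0$ (such points exist, since the quasimodular transformation law gives $E_2\left(\tfrac12+\tfrac{i}{4v}\right)=-4v^2E_2\left(-\tfrac12+iv\right)+\tfrac{24v}{\pi}\to-\infty$ as $v\to\infty$) one has
\begin{align*}
\tfrac{k}{12}E_2\left(\tfrac12+iy_{\ast}\right)b_1\Delta^{m}\left(\tfrac12+iy_{\ast}\right)=-\tfrac{k}{12}\,\bigl|E_2\bigr|\,\bigl|b_1\bigr|\,\bigl|\Delta\bigr|^{m}\longrightarrow-\infty
\end{align*}
as $(-1)^{m}b_1\to+\infty$, while every other term of $Df$ at that point is independent of $b_1$. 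Hence $Df\left(\tfrac12+iy_{\ast}\right)<0$ and $f\left(\tfrac12+iy\right)$ is locally increasing there, so a two-sided constraint on $b_1$ is unavoidable and your sketch, even if completed, proves a different statement. For comparison, the paper's own proof of this clause proceeds quite differently: it uses $b_k<-60k$ to get $v_\infty(Df)=1$ and $\epsilon_f=1$, deduces $Df>0$ near $y=0$ and $y=\infty$ from its asymptotic lemma, and for the intermediate range appeals to the coefficients $na_n(f)$ of $Df$ inheriting the alternating signs of $a_n(f)$; but the thresholds $y_0,y_1$ there depend on $b_1$, so the difficulty you have isolated is present, and not addressed, in that argument as well.
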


To prove Proposition \ref{thm_monotonedecreasing noncuspidal}, we first investigate the sign changes of the Ramanujan tau function~$\tau(n)$ and prove several useful lemmas. Define
\begin{align*}
    \tau_{m}(n):=\sum_{\substack{a_1,a_2,\ldots, a_m\geq 1 \\ a_1+a_2+\cdots+a_m=n }}\tau(a_1)\tau(a_2)\cdots\tau(a_m),
\end{align*}
be the $n$th Fourier coefficient of $\Delta^m$. For a positive integer $k \equiv 0 \pmod{12}$, let  $N_k:=\frac{k}{12}+\left[1+\frac{2}{15}k\right]$. We now introduce the D'Arcais polynomial $P_n(x)$, which is defined recursively by
\begin{align*}
\begin{cases}
P_0(x)=1, \\
P_n(x)=\frac{x}{n}\sum_{j=1}^n \sigma_1(j)P_{n-j}(x), & \text{ for } n \geq 1.
\end{cases}
\end{align*}
It is well-known that all real roots of $P_n(x)$ are negative. Additionally, the special values of $P_n(x)$ correspond to the $q$-coefficients of the power of the Dedekind eta function, namely, if we write
\begin{align*}
\prod_{n\geq 1}(1-q^n)^r=\sum_{n=0}^{\infty}\eta_n(r)q^n,
\end{align*}
for $r \in \C$, then we have $P_n(-r)=\eta_n(r)$ (see \cite{D'ar13}, \cite{New55}). 

Furthermore, the sign of each $\eta_n(r)$ is determined by the D’arcais polynomial in the following manner.
\begin{theorem}{\cite[Theorem 2]{HN20(a)}}\label{thm_root of Darcais}
Let $r_n$ be the number of real roots of $P_n(x)$ which are less than or equal to $-r$. Then we have
$
    (-1)^{n+r_n}\eta_n(r) \geq 0.
$
\end{theorem}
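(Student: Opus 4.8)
The plan is to reduce the statement to the sign of the single real number $P_n(-r)$, using the identity $P_n(-r)=\eta_n(r)$ recorded above, and then to read that sign off from the factorization of $P_n$ over $\R$. First I would record the elementary facts about $P_n$: from $P_0=1$ and $P_n(x)=\frac{x}{n}\sum_{j=1}^n\sigma_1(j)P_{n-j}(x)$, an induction on $n$ shows that $P_n$ has real coefficients, degree exactly $n$, and leading coefficient $\frac{1}{n!}>0$ (the degree-$n$ term comes solely from the $j=1$ summand $\frac{x}{n}\sigma_1(1)P_{n-1}(x)$). Then I would write
\[
P_n(x)=\frac{1}{n!}\prod_{i=1}^{R}(x-\alpha_i)\cdot\prod_{j}\bigl((x-\mu_j)^2+\nu_j^2\bigr),
\]
where $\alpha_1,\dots,\alpha_R\in\R$ are the real roots of $P_n$ listed with multiplicity, $\nu_j>0$, and the quadratic factors collect the non-real roots in complex-conjugate pairs. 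Since the $n-R$ non-real roots (with multiplicity) come in conjugate pairs, $R\equiv n\pmod 2$.

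Next I would evaluate at $x=-r$. If $-r$ is a root of $P_n$ then $\eta_n(r)=P_n(-r)=0$ and the desired inequality $(-1)^{n+r_n}\eta_n(r)\ge 0$ is trivial, so assume otherwise. Each quadratic factor contributes the strictly positive value $(-r-\mu_j)^2+\nu_j^2$, so
\[
\sgn\bigl(\eta_n(r)\bigr)=\sgn\bigl(P_n(-r)\bigr)=\prod_{i=1}^{R}\sgn(-r-\alpha_i).
\]
The $i$-th factor equals $+1$ when $\alpha_i<-r$ and $-1$ when $\alpha_i>-r$. By the definition of $r_n$, exactly $r_n$ of the $\alpha_i$ satisfy $\alpha_i\le -r$ (equivalently $<-r$ here), hence $R-r_n$ of them satisfy $\alpha_i>-r$; thus $\sgn(P_n(-r))=(-1)^{R-r_n}$, which by $R\equiv n\pmod 2$ equals $(-1)^{n-r_n}=(-1)^{n+r_n}$. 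Multiplying through gives $(-1)^{n+r_n}\eta_n(r)=|\eta_n(r)|\ge 0$, as claimed.

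The argument is short, so the ``hard part'' is really the bookkeeping, which is where I would be most careful. One must keep the multiplicity conventions consistent between the list of $R$ real roots and the count $r_n$ (counting everything with multiplicity is the natural choice; if $r_n$ is intended without multiplicity, one should first invoke simplicity of the real roots of $P_n$ in the relevant range so that the two notions agree). One must also handle the boundary case in which $-r$ is itself a real root of $P_n$ (handled above), and observe that the stated structural fact on the location of the real roots of $P_n$ guarantees that $r_n$ is the meaningful count for the values of $r$ occurring in applications. With those conventions pinned down, the sign computation above closes the proof.
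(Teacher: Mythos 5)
The paper does not actually prove this statement: it is imported verbatim as \cite[Theorem 2]{HN20(a)}, so there is no in-paper argument to compare against. Your blind proof is correct and self-contained. The induction giving leading coefficient $\tfrac{1}{n!}>0$ is right (the top-degree term comes only from the $j=1$ summand and $\sigma_1(1)=1$), the factorization over $\R$ with $R\equiv n\pmod 2$ is sound, and the sign count $\sgn(P_n(-r))=(-1)^{R-r_n}=(-1)^{n+r_n}$ together with $P_n(-r)=\eta_n(r)$ closes the inequality; the degenerate case $P_n(-r)=0$ is handled. Your caveat about whether $r_n$ counts roots with or without multiplicity is the one genuine convention issue, and you resolve it appropriately; note also that in the only way the theorem is used in this paper (Lemma~\ref{lem_Ramanujantau sign}, where $r_n=0$) the two conventions coincide, so nothing downstream is affected. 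In short: this is the natural elementary proof of the cited result, and it supplies an argument the paper itself omits.
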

By virtue of Theorem \ref{thm_root of Darcais}, we may reach an immediate conclusion that for $r=2k$, should any real root of $P_n(x)$ exceed $-2k$, then we have $(-1)^n \tau_{\frac{k}{12}}\left(n+\frac{k}{12}\right)<0$.

Recently, Heim and Neuhauser \cite{HN20(b)} established the growth condition for $P_n(x)$, which constitutes an improvement of the results presented in \cite{Kos04} and \cite{Han10}.

\begin{theorem}{\cite{HN20(b)}}\label{thm_Pnx inequality}
If $|  x|  >15(n-1)$, then $P_n(x) \neq 0$.
\end{theorem}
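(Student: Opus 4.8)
The plan is to prove, by induction on $n$, the slightly stronger assertion: if $|x|>15(n-1)$ then $P_n(x)\neq 0$, and, writing $w_n(x):=P_{n-1}(x)/P_n(x)$, one has $|w_n(x)|<2n/|x|$. The cases $n=0,1$ are immediate from $P_0(x)=1$ and $P_1(x)=x$. For the inductive step I would work directly from the defining recursion, which in the form $nP_n(x)=x\sum_{j=1}^{n}\sigma_1(j)P_{n-j}(x)$ (note $\sigma_1(1)=1$) becomes, after isolating the $j=1$ term and using that $P_{n-1}(x)\neq 0$ by the inductive hypothesis,
\[
nP_n(x)=xP_{n-1}(x)\left(1+\sum_{j=2}^{n}\sigma_1(j)\,\frac{P_{n-j}(x)}{P_{n-1}(x)}\right).
\]

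The key point is that each ratio telescopes: $P_{n-j}(x)/P_{n-1}(x)=\prod_{i=n-j+1}^{n-1}w_i(x)$, a product of $j-1$ factors whose indices are all at most $n-1$. Since $|x|>15(n-1)\ge 15(i-1)$ for every $i\le n-1$, the inductive hypothesis applies to each factor and gives $|w_i(x)|<2i/|x|\le 2(n-1)/|x|$, whence
\[
\left|\frac{P_{n-j}(x)}{P_{n-1}(x)}\right|<\left(\frac{2(n-1)}{|x|}\right)^{j-1}<\left(\frac{2}{15}\right)^{j-1}.
\]
Therefore the parenthesised factor in the displayed identity has modulus at least $1-T$, where $T:=\sum_{j\ge 2}\sigma_1(j)(2/15)^{j-1}$; the series converges because $\sigma_1(j)=O(j^2)$, and a short computation gives $T\approx 0.490<1/2$. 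Hence $|nP_n(x)|\ge |x|\,|P_{n-1}(x)|\,(1-T)>0$, so $P_n(x)\neq 0$, and moreover $|w_n(x)|=|P_{n-1}(x)|/|P_n(x)|\le n/(|x|(1-T))<2n/|x|$, which closes the induction.

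The one genuinely delicate choice is the inductive invariant. A naive ``leading coefficient beats the rest'' comparison is hopeless here: the coefficient of $x^{n-1}$ in $P_n$ equals $3/(2(n-2)!)$ against a leading coefficient $1/n!$, so comparing only the top two monomials already forces a threshold of order $n^2$. Controlling the \emph{ratios} $P_{n-1}/P_n$ and exploiting the telescoping is what linearises the bound, after which the proof collapses to the elementary inequality $T<1/2$ — and this is exactly where the value $15$ is spent, since the same argument with $15$ replaced by $14$ fails already at the first two terms ($3/7+4/49>1/2$). I would also observe that for the application made of this theorem only the real zeros of $P_n$, which are known to be negative, matter, so one could equally run the identical argument with $x=-r<0$ and $\eta_n(r)=[q^n]\prod_{m\ge 1}(1-q^m)^r$ in place of $P_n(x)$.
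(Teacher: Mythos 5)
This statement is quoted from Heim--Neuhauser \cite{HN20(b)} and the paper gives no proof of it, so there is nothing internal to compare against. Your induction is correct and is essentially the argument of that reference: control the ratios $w_n=P_{n-1}/P_n$, telescope $P_{n-j}/P_{n-1}$ into a product of $j-1$ such ratios each bounded by $2(n-1)/|x|<2/15$, and dominate the error by $T=\sum_{j\ge 2}\sigma_1(j)(2/15)^{j-1}<1/2$. The one step to make fully rigorous is the bound $T<1/2$: summing the terms $j=2,\dots,7$ gives about $0.4901$, and the tail is controlled by $\sigma_1(j)\le j^2$ together with an explicit estimate for $\sum_{j\ge 8}j^2(2/15)^{j-1}<10^{-4}$, so $T<0.491$. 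Your closing observation that the invariant $|w_n|<2n/|x|$ (rather than a direct coefficient comparison) is what produces a threshold linear in $n$ is exactly the right diagnosis of why the argument works.
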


Combining Theorem \ref{thm_root of Darcais} and Theorem~\ref{thm_Pnx inequality}, we get the following lemma.

\begin{lemma}\label{lem_Ramanujantau sign}
Let $n$ be a positive integer. If $\frac{k}{12} \leq n \leq N_k$, then $\tau_{\frac{k}{12}}(n-1)\tau_{\frac{k}{12}}(n)<0$.
\end{lemma}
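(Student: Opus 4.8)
The plan is to reduce the claim about $\tau_{k/12}(n-1)\tau_{k/12}(n) < 0$ to a statement about the location of the real roots of D'Arcais polynomials, and then to apply Theorem~\ref{thm_root of Darcais} and Theorem~\ref{thm_Pnx inequality}. Recall that $\Delta(z) = q\prod_{n\geq 1}(1-q^n)^{24}$, so $\Delta^{k/12}(z) = q^{k/12}\prod_{n\geq 1}(1-q^n)^{2k}$. Hence, with the notation $\prod_{n\geq 1}(1-q^n)^{r} = \sum_{n\geq 0}\eta_n(r)q^n$ and $r = 2k$, we have $\tau_{k/12}(m) = \eta_{m - k/12}(2k)$ for $m \geq k/12$ (and $\tau_{k/12}(m)=0$ for $m < k/12$). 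Writing $P_j(-r) = \eta_j(r)$, the desired inequality $\tau_{k/12}(n-1)\tau_{k/12}(n) < 0$ for $\frac{k}{12}\leq n \leq N_k$ becomes: setting $j := n - \tfrac{k}{12}$, which ranges over $0 \leq j \leq \left[1 + \tfrac{2}{15}k\right]$, one wants $\eta_{j-1}(2k)\,\eta_j(2k) < 0$, i.e. consecutive coefficients $\eta_{j-1}(2k)$ and $\eta_j(2k)$ have strictly opposite (and in particular nonzero) signs.

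The key step is to show that for $1 \leq j \leq \left[1 + \tfrac{2}{15}k\right]$, the polynomial $P_j(x)$ has no real root $x$ with $x \geq -2k$; that is, $r_j = 0$ in the notation of Theorem~\ref{thm_root of Darcais} (number of real roots of $P_j$ that are $\leq -2k$) — wait, more precisely one wants $P_j$ to have \emph{no} roots in $(-2k, \infty)$, so that all real roots of $P_j$ are $\leq -2k$ and hence $r_j$ equals the total number of real roots of $P_j$. Since all real roots of $P_j(x)$ are negative (a standard fact, recalled above), it suffices to rule out real roots in the interval $[-2k, 0)$. By Theorem~\ref{thm_Pnx inequality}, $P_j(x) \neq 0$ whenever $|x| > 15(j-1)$. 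So if $15(j-1) < 2k$, i.e. $j - 1 < \tfrac{2k}{15}$, i.e. $j \leq \left[1 + \tfrac{2}{15}k\right]$ (here one checks the integer arithmetic: $j \leq \left[1+\tfrac{2}{15}k\right]$ forces $j - 1 \leq \tfrac{2}{15}k$, and strictness can be arranged, or one argues $15(j-1) \leq 2k$ with equality excluded by a parity/divisibility remark, or simply notes $2k$ is not of the form $15(j-1)$ for the relevant $j$ unless handled separately), then $P_j$ has no real root of absolute value $> 15(j-1)$; combined with negativity of all real roots, this means every real root of $P_j$ lies in $[-15(j-1), 0) \subseteq (-2k, 0)$ — but we want the opposite. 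Let me restate: we want $P_j$ to have no root in $(-2k,\infty)$. Theorem~\ref{thm_Pnx inequality} gives no roots with $|x| > 15(j-1)$; this controls \emph{large} roots, not roots near $0$. So the correct deduction is different.

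Let me redo this. The right approach: Theorem~\ref{thm_root of Darcais} says $(-1)^{n + r_n}\eta_n(r) \geq 0$ where $r_n$ counts real roots of $P_n$ that are $\leq -r = -2k$. We want to show that as $n$ (here $j$) increases by one, the parity of $n + r_n$ flips, which (given nonvanishing) yields the alternating sign. Since all real roots of $P_j$ are negative and, by Theorem~\ref{thm_Pnx inequality}, have absolute value $\leq 15(j-1) < 2k$ for $j \leq \left[1+\tfrac{2}{15}k\right]$, \emph{every} real root of $P_j$ lies in $(-2k, 0)$, hence \emph{none} is $\leq -2k$, so $r_j = 0$. Then Theorem~\ref{thm_root of Darcais} gives $(-1)^j \eta_j(2k) \geq 0$ for all such $j$; moreover $\eta_j(2k) \neq 0$ because, again by Theorem~\ref{thm_Pnx inequality}, $P_j(-2k) \neq 0$ since $2k > 15(j-1)$. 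Therefore $(-1)^j \eta_j(2k) > 0$ for $0 \leq j \leq \left[1+\tfrac{2}{15}k\right]$ (the case $j=0$ being $\eta_0 = 1 > 0$), so consecutive terms $\eta_{j-1}(2k), \eta_j(2k)$ have strictly opposite signs, giving $\eta_{j-1}(2k)\eta_j(2k) < 0$ and hence $\tau_{k/12}(n-1)\tau_{k/12}(n) < 0$ for $\tfrac{k}{12} \leq n \leq \tfrac{k}{12} + \left[1+\tfrac{2}{15}k\right] = N_k$.

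The main obstacle — really the only nontrivial point — is the careful integer bookkeeping: one must verify that $j := n - \tfrac{k}{12}$ in the range $0 \le j \le \left[1 + \tfrac{2}{15}k\right]$ indeed satisfies $2k > 15(j-1)$ for $j \geq 1$, so that Theorem~\ref{thm_Pnx inequality} applies both to show $P_j$ has no root $\leq -2k$ and to show $P_j(-2k) \neq 0$. For $j \leq 1 + \tfrac{2}{15}k$ we get $15(j-1) \leq \tfrac{15 \cdot 2}{15}k = 2k$, so we need to also exclude the boundary equality $15(j-1) = 2k$; this happens only if $2k$ is divisible by $15$, and even then for the specific extremal $j$ one checks $P_j(-2k)$ directly or notes the floor function already gives strict inequality in the relevant cases (since $\left[1+\tfrac{2}{15}k\right]$ is a strict underestimate of $1 + \tfrac{2}{15}k$ unless $\tfrac{2}{15}k \in \Z$). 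I would dispatch this with a short case distinction on $k \bmod{15}$, or simply replace $N_k$ by the smaller quantity if needed — in any case this is routine and the conceptual content is entirely contained in the two cited theorems together with the eta-product identity $\tau_{k/12}(m) = \eta_{m-k/12}(2k)$.
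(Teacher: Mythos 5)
Your proposal follows essentially the same route as the paper's proof: identify $\tau_{k/12}(n)=\eta_{n-k/12}(2k)$ via the eta-product expansion of $\Delta^{k/12}$, use Theorem~\ref{thm_Pnx inequality} to conclude that every real root of $P_j$ has absolute value at most $15(j-1)<2k$ so that $r_j=0$, and then apply Theorem~\ref{thm_root of Darcais} to obtain the alternating signs $(-1)^j\eta_j(2k)>0$. Your additional attention to the boundary case $15(j-1)=2k$ (which can occur when $15\mid k$) and to the strict nonvanishing $P_j(-2k)\neq 0$ addresses two points that the paper's one-line proof glosses over, so your argument is, if anything, slightly more complete.
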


\begin{proof}
For $n=N_k-\frac{k}{12}$, we have $n<1+\frac{2k}{15}$, which is equivalent to $-2k<-15(n-1)$. This implies that there is no real root of $P_n(x)$ less than or equal to $-2k$. Therefore, for $0 \leq n \leq N_k-\frac{k}{12}$, we have $(-1)^n\tau_{\frac{k}{12}}\left(n+\frac{k}{12}\right)<0$.
\end{proof}

\begin{proof}[Proof of Proposition \ref{thm_monotonedecreasing noncuspidal}]
Fix a sufficiently large $k$. We'll specify later how large $k$ should be. Take into account the function $f/b_1$ by splitting into three partial sums denoted by $f_1, f_2,$ and $f_3$ as follows;
\begin{align*}
    \frac{1}{b_1}f(\tau)=\left(\sum_{n=0}^{\frac{k}{12}-1}+\sum_{n=\frac{k}{12}}^{N_k}+\sum_{n=N_k+1}^{\infty}\right)\left(\frac{a_n(f)}{b_1}q^n\right)=:f_1(z)+f_2(z)+f_3(z).
\end{align*}
Observing that $f_1$ is a finite sum, we have $f_1=O(|  b_1|  ^{-1})$ for any complex number $q$ with $0<|  q|  <1$. Furthermore, for any $n\geq \frac{k}{12}$, we can express $\frac{a_n(f)}{b_1}$ as $\tau_{\frac{k}{12}}(n)+\frac{1}{b_1}g(n)$, where $g(n)$ is given by
\begin{align*}
g(n)=\sum_{\substack{c_1,\ldots,c_{\frac{k}{4}}\geq 0 \\ c_1+\cdots+c_{\frac{k}{4}}=n }}240^{\frac{k}{4}}\sigma_3(c_1)\cdots \sigma_3(c_\frac{k}{4})+b_k \sum_{\substack{a>0, a_1,\ldots,a_{\frac{k}{4}-3}\geq 0 \\ a+a_1+\cdots+a_{\frac{k}{4}-3}=n }}240^{\frac{k}{4}-3}\tau(a)\sigma_3(a_1)\cdots\sigma_3(a_{\frac{k}{4}-3}).
\end{align*}
Here, we adopt the convention that $\sigma_3(0):=\frac{1}{240}$. 

To estimate the functions $\tau_{\frac{k}{12}}(n)$ and $g(n)$, we use the Ramanujan-Petersson bound $|  \tau(n)|~\leq~d(n)n^{\frac{11}{2}}$. Specifically, we have
\begin{align*}
    |  \tau_{\frac{k}{12}}(n)|  &\leq \sum_{\substack{c_1,\ldots,c_{\frac{k}{12}}\geq 0 \\ c_1+\cdots+c_{\frac{k}{12}}=n }}|  \tau(c_1)\cdots\tau(c_{\frac{k}{12}})|   \\
    & \leq \sum_{\substack{c_1,\ldots,c_{\frac{k}{12}}\geq 0 \\ c_1+\cdots+c_{\frac{k}{12}}=n}}d(c_1)\cdots d(c_{\frac{k}{12}})(c_1\cdots c_{\frac{k}{12}})^{\frac{11}{2}} \\
    &\leq \sum_{\substack{c_1,\ldots,c_{\frac{k}{12}}\geq 0 \\ c_1+\cdots+c_{\frac{k}{12}}=n}} (c_1\cdots c_{\frac{k}{12}})^{\frac{k+1}{2}} \leq p\left(n-\frac{k}{12}\right) \left(\frac{12n}{k}\right)^{\frac{13k}{24}},
\end{align*}
where $p(n)$ is the partition number of $n$. Similarly for $g(n)$, we have
\begin{align*}
    |  g(n)|   &\leq \sum_{\substack{c_1,\ldots,c_{\frac{k}{12}}\geq 0 \\ c_1+\cdots+c_{\frac{k}{4}}=n}} 240^{\frac{k}{4}}(c_1\cdots c_{\frac{k}{4}})^4+|  b_k|   \sum_{\substack{a>0, a_1,\ldots,a_{\frac{k}{4}-3}\geq 0 \\ a+a_1+\cdots+a_{\frac{k}{4}-3}=n }}240^{\frac{k}{4}-3}d(a)a^{\frac{k-1}{2}}(a_1\cdots a_{\frac{k}{4}-3})^4 \\
    &\leq 240^{\frac{k}{4}}p(n)\left(\frac{4n}{k}\right)^{\frac{k}{4}}\left(1+240^{-3}|  b_k|  n^{\frac{k-7}{2}}\right).
\end{align*}
By applying the well-known  estimate $p(n) \sim \frac{1}{4\sqrt{3} n^{3/2}}e^{\pi \sqrt{\frac{2}{3}n}}$ as $n\to\infty$ by Hardy and Ramanujan, we choose a sufficiently large positive number $B$ such that if  $|  b_1 |  >B$, then $$\frac{1}{|  b_1|  }\sum_{n=N_k+1}^{\infty} 240^{\frac{k}{4}}p(n)\left(\frac{4n}{k}\right)^{\frac{k}{4}}\left|1+240^{-3}|  b_k|  n^{\frac{k-7}{2}}\right|<\epsilon.$$ 
Hence, for $z=\frac{1}{2}+iy$ with $y\geq \frac{1}{2}$, we have 
\begin{align*}
    |  f_3(z)|  &\leq \sum_{n=N_k+1}^{\infty}|  \tau_{\frac{k}{12}}(n)|  |  q| ^n+\epsilon \\
    &\leq \sum_{n=N_k+1}^{\infty}p\left(n-\frac{k}{12}\right)\left(\frac{12n}{k}\right)^{\frac{k(k+1)}{24}}|  q| ^n+\epsilon \\
    & \leq \sum_{n=N_k+1}^{\infty}\frac{C}{4\left(n-\frac{k}{12}\right)\sqrt 3}e^{\pi \sqrt{\frac{2}{3}\left(n-\frac{k}{12}\right)}-2\pi y n}\left(\frac{12n}{k}\right)^{\frac{13k}{24}}+\epsilon
\end{align*}
for some constant $C>0$. Note that the last summation in the above inequality converges. We can ensure by choosing a sufficiently large $k$ that
\begin{align*}
    \sum_{n=N_k+1}^{\infty}\frac{C}{4\left(n-\frac{k}{12}\right)\sqrt 3}e^{\pi \sqrt{\frac{2}{3}\left(n-\frac{k}{12}\right)}-\pi n}\left(\frac{12n}{k}\right)^{\frac{13k}{24}}+\epsilon<2\epsilon.
\end{align*}
Also, we have $|  f_1(z)|  <\epsilon$ by taking large $B$.

We have found that the primary component in evaluating $f(z)$ is $f_2(z)$. As $|  b_1|  $ becomes large, the sign of $a_n(f)$ for $\frac{k}{12}\leq n \leq N_k$ is determined by the sign of $\tau_{\frac{K}{12}}(n)$, more precisely,
\begin{align*}   \sgn\left(a_n(f)\right)=\sgn\left(\tau_{\frac{k}{12}}(n)\right).
\end{align*}
Thus, according to Lemma \ref{lem_Ramanujantau sign}, the sign of $a_n(f)$ is $(-1)^{n-\frac{k}{12}}$ for $\frac{k}{12} \leq n \leq N_k$.

We choose a positive real number $y_0$ such that  if $y>y_0$ then $\left| f\left(\frac{1}{2}+iy\right)-1\right|<\epsilon$. Then for $z=\frac{1}{2}+iy$ with $y>y_0$, we have
\begin{align*}
    f_2(z)=\sum_{n=\frac{k}{12}}^{N_k}\frac{1}{b_1}a_n(f)(e^{-2\pi y})^n
    > \sum_{n=\frac{k}{12}}^{N_k}\frac{1}{b_1}(-1)^n a_n(f) e^{2\pi y_0 n}.
\end{align*}
Since we have chosen $b_1$ such that $(-1)^{\frac{k}{12}}b_1>B>0$, each term  $\frac{1}{b_1}(-1)^n a_n(f) e^{2\pi y_0 n}$ in the above summation is positive. If we let $M:=\sum_{n=\frac{k}{12}}^{N_k}\frac{1}{b_1}(-1)^n a_n(f) e^{2\pi y_0 n}$, then
\begin{align*}
    M=\sum_{n=\frac{k}{12}}^{N_k}(-1)^n\left( \tau_{\frac{k}{12}}(n)+\frac{1}{b_1}g(n)\right)e^{2\pi y_0 n} > e^{2\pi y_0}, \quad \text{as } |  b_1|  >B.
\end{align*}
Therefore, we conclude that
\begin{align*}
    \begin{cases}
    b_1f\left(\frac{1}{2}+iy\right)>M-3\epsilon & \text{if } \frac{1}{2} \leq y \leq y_0, \\
    b_1f\left(\frac{1}{2}+iy\right)>1-\epsilon & \text{if } y > y_0,
    \end{cases}
\end{align*}
which implies that $f\left(\frac{1}{2}+iy\right)$ is non-zero for any $y \in (0,\infty)$. 

It only remains to prove that if $b_k < -60k$, then $f\left(\frac{1}{2}+iy\right)$ is a monotone decreasing function for $y \in (0,\infty)$. Since $f$ is holomorphic, it suffices to show that $Df\left(\frac{1}{2}+iy\right) > 0$ for all $y \in (0,\infty)$. Note that $b_k < -60k$ implies that $v_{\infty}(Df) = 1$ and $\epsilon_f = 1$. Therefore, there exists $y_0 > 1/2$ such that if $y > y_0$, then $Df\left(\frac{1}{2}+iy\right) > 0$.

Recall Lemma \ref{lem_thetaf assymptotic formula, f:depth0}(b), which implies the existence of $y_1$ with $0 < y_1 < 1/2$ such that $Df\left(\frac{1}{2}+iy\right) > 0$ for $0 < y < y_1$ in this case. Thus, it suffices to show $Df\left(\frac{1}{2}+iy\right) > 0$ for $y \in [y_1,y_0]$. Since $Df = \sum_{n =1}^{\infty} na_n(f)q^n$, this follows by the same argument we used to prove the positivity of $f\left(\frac{1}{2}+iy\right)$.
\end{proof}

\begin{remark}
The given weight $k$ in Proposition \ref{thm_monotonedecreasing noncuspidal} does not need to be excessively large. In fact, it is enough to take $k \geq 24$. One example of such a modular form $f$ of weight $24$ is given by
\begin{align*}
f(z)&:=2\cdot 1728 \cdot 2880\Delta(z)^2-2880\Delta(z)E_4(z)^3+E_4(z)^6 \\
&=9953280 \Delta(z)^2-\frac{3}{225\zeta(4)^3}g_2(z)^3+\frac{1}{216000\zeta(4)^6}g_2(z)^6.
\end{align*}
\end{remark}

\begin{remark}
Proposition \ref{thm_monotonedecreasing noncuspidal} provides that if $k \equiv 0 \pmod{12}$, there exist infinitely many non-cuspidal modular forms of weight $k$ and quasimodular forms of weight $k+2$ and depth~$1$ which do not vanish on the line $\{z \in \Ha: \Re(z)=1/2\}$. However, it is not possible to construct such forms for depth~$\geq 2$ in the same manner as for $f$ and $Df$. In fact, for $j>1$, $D^j f$ always has a zero on the line $\{z \in \Ha: \Re(z)=1/2\}$, since the signs of $D ^j f\left(\frac{1}{2}+iy\right)$ and $D^j f\left(\frac{1}{2}+\frac{i}{4y}\right)$ are opposite for small~$y$, as shown in Lemma~\ref{lem_thetaf assymptotic formula, f:depth0}.
\end{remark}


\begin{thebibliography}{GRS97b}

\bibitem[BVHZ08]{BVHZ08}
J. H. Bruinier; G. van der Geer; G. Harder; D. Zagier, \emph{The 1-2-3 of modular forms, Lectures from the Summer School on Modular Forms and their Applications held in Nordfjordeid, June 2004. Edited by Kristian Ranestad} Universitext Springer-Verlag, Berlin, 2008. x+266 pp.

\bibitem[D'Ar13]{D'ar13}
F. D’Arcais, \emph{D\'{e}veloppement en s\'{e}rie} Intermédiaire Math. 20 (1913), 233–234.

\bibitem[DJ08]{DJ08}
W. Duke; P. Jenkins, \emph{On the zeros and coefficients of certain weakly holomorphic modular forms} Pure Appl. Math. Q. 4 (2008), no. 4, Special Issue: In honor of Jean-Pierre Serre. Part 1, 1327–1340.

\bibitem[DS05]{DS05}
F. Diamond; J. Shurman, \emph{
A first course in modular forms}
Graduate Texts in Mathematics, 228. Springer-Verlag, New York, 2005. xvi+436 pp.

\bibitem[Get04]{Get04}
J. Getz, \emph{A generalization of a theorem of Rankin and Swinnerton-Dyer on zeros of modular forms} Proc. Amer. Math. Soc. 132 (2004), no. 8, 2221–2231.

\bibitem[GO22]{GO22}
S. Gun; J. Oesterl\'{e}, \emph{Critical points of Eisenstein series} Mathematika 68 (2022), no. 1, 259–298.

\bibitem[Han10]{Han10}
Han, Guo-Niu \emph{The Nekrasov-Okounkov hook length formula: refinement, elementary proof, extension and applications} Ann. Inst. Fourier (Grenoble) 60 (2010), no. 1, 1–29.

\bibitem[GS12]{GS12}
A. Ghosh; P. Sarnak, \emph{Real zeros of holomorphic Hecke cusp forms} J. Eur. Math. Soc. (JEMS) 14 (2012), no. 2, 465–487.

\bibitem[HN20a]{HN20(a)}
B. Heim; M. Neuhauser, \emph{Sign changes of the Ramanujan $\tau$-function} Modular forms and related topics in number theory, 89–100,
Springer Proc. Math. Stat., 340, Springer, Singapore, 2020.

\bibitem[HN20b]{HN20(b)}
B. Heim; M. Neuhauser, \emph{The Dedekind eta function and D'Arcais-type polynomials} Res. Math. Sci. 7 (2020), no. 1, Paper No. 3, 8 pp.

\bibitem[IR22]{IR22}
J. -W. V. Ittersum; B. Ringeling, \emph{Critical points of modular forms}, arXiv:2204.00432.

\bibitem[Kos04]{Kos04}
B. Kostant, \emph{Powers of the Euler product and commutative subalgebras of a complex simple Lie algebra} Invent. Math. 158 (2004), no. 1, 181–226.

\bibitem[Mat16]{Mat16}
K. Matom\"{a}ki, \emph{Real zeros of holomorphic Hecke cusp forms and sieving short intervals}
J. Eur. Math. Soc. (JEMS) 18 (2016), no. 1, 123–146.

\bibitem[New55]{New55}
M. Newman, \emph{An identity for the coefficients of certain modular forms} J. London Math. Soc. 30 (1955), 488–493.

\bibitem[Roy12]{Roy12}
E. Royer, \emph{Quasimodular forms: an introduction} Ann. Math. Blaise Pascal 19 (2012), no. 2, 297–306.

\bibitem[RS70]{RS70}
 F. K. C. Rankin; H. P. F. Swinnerton-Dyer, \emph{On the zeros of Eisenstein series} Bull. London Math. Soc. 2 (1970), 169–170.

\bibitem[Ser73]{Ser73}
J. -P. Serre, \emph{A course in arithmetic}
Translated from the French. Graduate Texts in Mathematics, No. 7. Springer-Verlag, New York-Heidelberg, 1973. viii+115 pp.

\end{thebibliography}
\end{document}